\theoremstyle{plain}
\newtheorem{theorem}{Theorem}[section]
\newtheorem{lemma}[theorem]{Lemma}
\newtheorem{proposition}[theorem]{Proposition}
\newtheorem{remark}{Remark}
\newtheorem{conjecture}{Conjecture}
\title[E.G.G.S. and integral points on hyperelliptic curves]{A problem of Erd\H{o}s-Graham-Granville-Selfridge on integral points on hyperelliptic curves}
\author{Hung M. Bui, Kyle Pratt and Alexandru Zaharescu}
\address{Department of Mathematics, University of Manchester, Manchester M13 9PL, UK}
\email{hung.bui@manchester.ac.uk}
\address{All Souls College, Oxford OX1 4AL, UK}
\email{kyle.pratt@all-souls.ox.ac.uk}
\address{Department of Mathematics, University of Illinois at Urbana-Champaign, 1409 West Green Street, Urbana, IL 61801, USA
and Simion Stoilow Institute of Mathematics of the Romanian Academy, P.O. Box 1-764, RO-014700 Bucharest,
Romania}
\email{zaharesc@illinois.edu}
\subjclass[2010]{11N25, 11D41}
\keywords{squares, largest prime factor, smooth numbers, hyperelliptic curves, integral points}
\begin{document}
\date{}

\maketitle

\begin{abstract}
Erd\H{o}s, Graham, and Selfridge considered, for each positive integer $n$, the least value of $t_n$ so that the integers $n+1, n+2, \dots, n+t_n $ contain a subset the product of whose members with $n$ is a square. An open problem posed by Granville concerns the size of $t_n$, under the assumption of the ABC Conjecture. We establish some results on the distribution of $t_n$, and in the process solve Granville's problem unconditionally.
\end{abstract}

\section{Introduction}

A question of Erd\H{o}s, Graham, and Selfridge (\cite{ES1992} and \cite[B30]{Guy2004}) asks to find the least
value of $t_n$ so that the integers $n+1, n+2, \dots, n+t_n $ contain a subset the product of whose
members with $n$ is a square. (If $n$ is a square then we set $t_n=0$.) That is, $t_n \geq 0$ is the least integer such that there are integers $1\leq j_1 < \cdots <j_s = t_n$ with 
\begin{align*}
n\prod_{i=1}^s (n+j_i) = \square,
\end{align*}
where ``$m = \square$'' means that the integer $m$ is a square. For instance, we easily compute that $t_2 = 4, t_3 = 5, t_5 = 5$, and $t_6 = 6$ since
\begin{align*}
2\cdot 3\cdot 6 &= 6^2\\
3 \cdot 6 \cdot 8 &= 12^2\\
5 \cdot 8 \cdot 10 &= 20^2\\
6 \cdot 8 \cdot 12 &= 24^2
\end{align*}
and none of the last numbers in the products can be replaced by smaller integers.

One can interpret $t_n$ in terms of integer points on hyperelliptic curves. As an example, we have $t_{14}=7$ from $14\cdot 15 \cdot 18\cdot 20\cdot 21=1260^2$, and this gives rise to the integral point $(x,y)=(14,1260)$ on the hyperelliptic curve $y^2=x(x+1)(x+4)(x+6)(x+7)$. Granville noted this connection \cite[B30]{Guy2004} and observed that effective versions of Faltings' Theorem \cite{Fal1983} would lead to corresponding effective bounds on $t_n$. Granville mentioned that the ABC conjecture should lead, via work of Elkies \cite{E1991} and Langevin \cite{L1993}, to stronger bounds on $t_n$. He also stated that, presumably, $t_n > n^c $ for some fixed constant $ c > 0 $, and perhaps one could prove this assuming the ABC conjecture.

Our first result below shows that this supposition fails dramatically. A beautiful result of Granville and Selfridge \cite[Corollary 1]{GS2001} shows that if the largest prime factor $P^+(n) $ of $n$ satisfies $P^+(n) > \sqrt{2n}+1$ then $t_n = P^+(n)$. This inspired us to study closely the relationship between $P^+(n)$ and $t_n$. While $t_n$ and $P^+(n)$ may no longer be close if $P^+(n) \leq \sqrt{2n}+1$, we find the distribution of $t_n$ continues to follow that of $P^+(n)$ in larger ranges.

\begin{theorem}\label{thm:cor of main thm}
For any fixed $c \in (0,1]$, 
$$
\lim_{x\to\infty} \frac{ \#\{ n \le x :  t_n \leq n^c \} }{x} = \lim_{x\to\infty} \frac{ \#\{ n \le x :  P^+(n) \leq n^c \} }{x} \; .
$$
\end{theorem}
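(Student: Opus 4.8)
The plan is to bound $\#\{n\le x: t_n\le n^c\}$ above and below by counting functions for $P^+(n)$ with the exponent $c$ perturbed by an arbitrarily small amount, and then to send the perturbation to zero, using the classical fact that $F(c):=\lim_{x\to\infty}\frac1x\#\{n\le x: P^+(n)\le n^c\}$ exists for every $c\in(0,1]$ and is a continuous function of $c$. Throughout it is convenient to phrase the bound ``$t_n\le T$'' linear-algebraically: for a real $y\ge 2$ and an integer $m$ with $P^+(m)\le y$, let $v_y(m)\in\mathbb{F}_2^{\pi(y)}$ record the parities of the exponents of the primes $p\le y$ in $m$. Then a family of $y$-smooth integers contains a subset with square product exactly when the vectors $v_y(\cdot)$ are linearly dependent over $\mathbb{F}_2$; and if $P^+(n)\le y$ then $t_n\le T$ as soon as $v_y(n)$ lies in the span of $\{v_y(n+j):1\le j\le T,\ P^+(n+j)\le y\}$.

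\textbf{The lower bound for $t_n$ (easy half).} I claim $t_n\ge P^+(n)$ whenever $p:=P^+(n)$ satisfies $v_p(n)=1$: if $n\prod_{i=1}^s(n+j_i)=\square$ with $j_s=t_n$, then comparing $p$-adic valuations shows $\sum_i v_p(n+j_i)$ is odd, so $p\mid n+j_i$ for some $i$, and since $p\mid n$ this forces $p\mid j_i$, hence $t_n\ge j_i\ge p$. The remaining $n\le x$, those with $P^+(n)^2\mid n$, lie in $\{n\le x:\exists\,d>\log x,\ d^2\mid n\}\cup\{n\le x:P^+(n)\le\log x\}$, both of size $o(x)$ (the first since $\sum_{d>\log x}x/d^2\ll x/\log x$, the second since $\Psi(x,\log x)=o(x)$). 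Hence $\#\{n\le x: t_n\le n^c\}\le\#\{n\le x: P^+(n)\le n^c\}+o(x)$, so
$$\limsup_{x\to\infty}\tfrac1x\#\{n\le x: t_n\le n^c\}\le F(c).$$

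\textbf{The upper bound for $t_n$ (hard half).} Fix $\delta\in(0,c)$ and restrict to $n$ with $P^+(n)\le n^{c-\delta}$. Set $y=n^{c-\delta/2}$ and $T=n^c$, and look at the $y$-smooth integers in $(n,n+T]$. A short-interval estimate for smooth numbers, valid for all $n\le x$ outside a set of size $o(x)$, shows this interval contains at least $n^{c-\delta/4}$ of them — more than $\pi(y)$ by a factor $\gg n^{\delta/4}$ — while $n$ itself is $y$-smooth. If the vectors $v_y(n+j)$, with $n+j$ running over the $y$-smooth integers in $(n,n+T]$, span all of $\mathbb{F}_2^{\pi(y)}$, then in particular they span $v_y(n)$, and the reformulation above gives $t_n\le T=n^c$. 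Granting (see below) that this spanning holds for all but $o(x)$ of $n\le x$, we get $\#\{n\le x: t_n\le n^c\}\ge\#\{n\le x: P^+(n)\le n^{c-\delta}\}-o(x)$, so $\liminf_{x\to\infty}\tfrac1x\#\{n\le x: t_n\le n^c\}\ge F(c-\delta)$. Letting $\delta\to0$ and using the continuity of $F$ gives $\liminf\ge F(c)$; together with the previous paragraph and the classical existence of $F(c)=\lim_{x\to\infty}\tfrac1x\#\{n\le x:P^+(n)\le n^c\}$, this is the theorem.

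\textbf{The main obstacle.} Everything rests on the spanning statement: for almost all $n\le x$, the squarefree parts of the $y$-smooth integers in $(n,n+n^c]$ generate $\mathbb{F}_2^{\pi(y)}$, where $y=n^{c-\delta/2}$. Heuristically this is immediate — one has far more than $\pi(y)$ essentially ``random'' vectors, and that many random vectors in $\mathbb{F}_2^{\pi(y)}$ span with overwhelming probability — but making it rigorous demands quantitative control of the joint distribution of $p$-adic valuations of smooth numbers across a short interval. Concretely, a failure of spanning corresponds to a nonzero $w\in\mathbb{F}_2^{\pi(y)}$, equivalently a squarefree $y$-smooth modulus, such that every $y$-smooth $m\in(n,n+n^c]$ has an even number of odd-exponent prime divisors among the primes dividing $w$; I would bound the number of $n\le x$ admitting such a $w$ by disposing of the high-weight $w$ via the trivial bound and treating the low-weight $w$ through character sums / the large sieve for smooth numbers in arithmetic progressions. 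This is exactly the flavour of analysis underlying rigorous running-time estimates for the quadratic sieve, and I expect essentially all of the real work — and any mild hypotheses ultimately required — to be concentrated in this step. (No claim is made for $n$ with $P^+(n)\in(n^{c-\delta},n^c]$; these form a set of density $F(c)-F(c-\delta)$, negligible as $\delta\to0$ by the continuity of $F$, which is precisely why the perturbation is introduced.)
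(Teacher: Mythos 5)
Your ``easy half'' is fine (your splitting of the exceptional set $\{n: P^+(n)^2\mid n\}$ into $n$ with a square divisor $d^2$, $d>\log x$, and $\log x$-smooth $n$ gives the needed $o(x)$ bound, and the valuation argument for $t_n\ge P^+(n)$ is exactly right). The problem is the hard half: the entire argument rests on the spanning claim --- that for almost all $n$ the parity vectors of the $y$-smooth integers in $(n,n+n^c]$ generate all of $\mathbb{F}_2^{\pi(y)}$ --- and you do not prove it; you explicitly defer ``essentially all of the real work'' to it and even allow for ``mild hypotheses ultimately required.'' This is not a routine verification one can wave at with ``character sums / large sieve.'' A failure of spanning is a nonzero $w\in\mathbb{F}_2^{\pi(y)}$ orthogonal to all the vectors, and to rule this out for all but $o(x)$ values of $n$ you would have to beat a union bound over $2^{\pi(y)}=\exp\bigl(c'\,n^{c-\delta/2}/\log n\bigr)$ choices of $w$; even the single-coordinate vectors $w=e_p$ with $p$ close to $y$ already require showing that some multiple of $p$ in the interval has a $y$-smooth cofactor, i.e.\ a smooth number in an interval of length about $n^{\delta/2}$ around $n/p$, which is a hard short-interval smoothness problem. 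So as written the proposal has a genuine gap at its central step, and it is far from clear the spanning statement is within reach (or even true in this strength).

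The paper gets by with something much weaker, and this is the idea you are missing. Instead of asking that every smooth $n$ individually have its vector in the span of the later nearby smooth vectors, it counts globally: Lemma \ref{lem:num subsets n plus tn < x plus y} shows that the number of subsets of an interval $I$ whose product is a square is \emph{exactly} $2^B$ with $B=\#\{n\in I: n+t_n\in I\}$, and Lemma \ref{lem:tn more than y smooth minus pi y} produces, by pure pigeonhole in $\mathbb{F}_2^{\pi(y)}$ (rank at most $\pi(y)$, so $M$ smooth numbers give at least $2^{M-\pi(y)}$ dependent subsets), the lower bound $B\ge M-\pi(y)$. Thus in each window of length $Y=x^c$ all but at most $\pi(Y)+O(Y/\log x)$ of the $Y$-smooth integers $n$ satisfy $t_n\le Y$, no spanning needed; summing over the $x/Y$ windows gives Proposition \ref{prop:P+n less than tn} with error $O(x/\log Y)$, which is $o(x)$ for fixed $c$. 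If you replace your spanning step by this counting identity plus pigeonhole, your outer structure (two-sided comparison, then $\delta\to 0$ using continuity of $\rho(1/c)$, or more simply the paper's direct passage from the $x^c$ threshold to the $n^c$ threshold) goes through unconditionally.
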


\begin{remark}
The right-hand side in the statement of Theorem \ref{thm:cor of main thm} is equal to $\rho(1/c)$, where $\rho(u)$ is the well-known Dickman-de Bruijn function which plays a prominent role in the theory of smooth numbers \cite{HT1993}. Therefore, for every fixed $c > 0$ a positive proportion of integers $n$ satisfy $t_n \leq n^c$.
\end{remark}

Theorem \ref{thm:cor of main thm} is a corollary of a slightly stronger theorem (Theorem \ref{thm:dist of tn and P+n is same}) which we state and prove in Section \ref{sec:dist of tn} below.

Our next result shows there are integers $n$ attaining even smaller values of $t_n$, much smaller than $n^c$.

\begin{theorem}\label{thm:many n with small tn}
Let $\epsilon>0$ be fixed and sufficiently small, and let $x$ be sufficiently large depending on $\epsilon$. Then there are at least $x \exp \Big(-\big(\tfrac{3\sqrt{2}}{2}+\epsilon \big)\sqrt{\log x \log \log x} \Big)$ integers $n\leq x$ such that
\begin{align*}
t_n \leq \exp \left(\sqrt{(2+\epsilon)\log n \log \log n} \right).
\end{align*}
\end{theorem}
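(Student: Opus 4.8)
The plan is to produce the required integers $n$ by a construction. Fix a smoothness level $y$ and an interval length $t$, both of the shape $\exp(c\sqrt{\log x\log\log x})$. If a short interval $(n,n+t]$ contains more than $\pi(y)$ integers all of whose prime factors are $\le y$, then one of these can be designated $n$ in such a way that $n$ times a subproduct of the others is a perfect square, and this forces $t_n\le t$. So the two tasks are: (i) show that a large number of intervals $(n,n+t]$, $n\le x$, have this ``many $y$-smooth numbers inside'' property, with the relevant count of order $x\exp(-(\tfrac{3\sqrt2}{2}+\epsilon)\sqrt{\log x\log\log x})$; and (ii) extract from each such interval a genuine (non-square) $n$ via linear algebra over $\mathbb F_2$.

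\textbf{Choice of parameters.} Write $L=\sqrt{\log x\log\log x}$. I would take $\log y=\tfrac1{\sqrt2}L$ and $t=\exp\big((\sqrt2+\delta)L\big)$ for a small fixed $\delta=\delta(\epsilon)>0$. With $u:=\log x/\log y=\sqrt2\,\sqrt{\log x/\log\log x}$, the standard estimates $\log(1/\rho(u))=(1+o(1))u\log u$ and $\log\pi(y)=(1+o(1))\log y$ give $\log(1/\rho(u))=(1+o(1))\tfrac1{\sqrt2}L$ and $\log\pi(y)=(1+o(1))\tfrac1{\sqrt2}L$. Thus $t\rho(u)$, the expected number of $y$-smooth integers in a length-$t$ window near $x$, is $\exp\big((\tfrac1{\sqrt2}+\delta+o(1))L\big)$, which comfortably exceeds $\pi(y)$; at the same time $t\le\exp\big(\sqrt{(2+\epsilon)\log n\log\log n}\big)$ for $n\asymp x$ once $\delta$ is small enough. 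The exponent $\tfrac1{\sqrt2}$ is forced: if $\log y=\eta L$ then the exponent of the smallest admissible $t$ is $\eta+\tfrac1{2\eta}$, and this is minimized exactly at $\eta=\tfrac1{\sqrt2}$, with minimum value $\sqrt2$ — which is why $\sqrt2$ is the constant that appears inside the exponential for $t_n$.

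\textbf{The construction and the count.} Partition $(x/2,x]$ into $\asymp x/t$ consecutive blocks of length $t$. Letting $\Psi(z,y)$ denote the number of $y$-smooth integers up to $z$, the asymptotic $\Psi(z,y)\sim z\rho(\log z/\log y)$ (valid here since $y$ exceeds every power of $\log x$) yields $\sum_B\#\{m\in B:P^+(m)\le y\}=\Psi(x,y)-\Psi(x/2,y)\gg x\rho(u)$. Since each block contributes at most $t$, at least $\gg x\rho(u)/t=x\exp\big(-(\tfrac{3\sqrt2}{2}+\delta+o(1))L\big)$ of the blocks — the ``good'' ones — contain more than $3\pi(y)$ integers $m$ with $P^+(m)\le y$; here $\tfrac{3\sqrt2}{2}=\tfrac1{\sqrt2}+\sqrt2$. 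Inside a good block $B=(N,N+t]$, list its $y$-smooth elements $m_1<\cdots<m_k$ with $k>3\pi(y)$, and record for each $m_i$ the vector $v_i\in\mathbb F_2^{\pi(y)}$ of parities of prime exponents. At most one $m_i$ is a perfect square (as $t=x^{o(1)}\ll\sqrt x$), so more than $\pi(y)$ of the $v_i$ are nonzero; being more numerous than the dimension, they have a minimal nonempty linear dependence $\sum_{i\in J}v_i=0$, and minimality forces $|J|\ge2$. Taking $n:=m_{\min J}$, the integer $n$ is $y$-smooth, lies in $(x/2,x]$, is not a perfect square, and satisfies $n\prod_{i\in J\setminus\{\min J\}}m_i=\square$ with every factor in $(n,n+t]$, so $t_n\le t$. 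Distinct good blocks give distinct $n$, and using $n>x/2$ to replace $L$ by $\sqrt{\log n\log\log n}$ and taking $\delta$ small enough, we obtain at least $x\exp\big(-(\tfrac{3\sqrt2}{2}+\epsilon)\sqrt{\log x\log\log x}\big)$ integers $n\le x$ with $t_n\le\exp\big(\sqrt{(2+\epsilon)\log n\log\log n}\big)$.

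\textbf{Main obstacle.} The conceptual heart is the parameter balance in the second paragraph — everything downstream is then bookkeeping — while the principal technical input is a sufficiently precise form of the smooth-number asymptotics ($\Psi(z,y)\sim z\rho(\log z/\log y)$ and $\log(1/\rho(u))\sim u\log u$) in the relevant range, which I would quote from the literature. The only genuine subtlety in the argument proper is ensuring the $\mathbb F_2$-dependence produces a \emph{non-square} $n$ (so that $t_n$ is meaningfully bounded rather than trivially $0$); this is precisely what the minimality $|J|\ge2$, combined with discarding the at most one square in each block, takes care of.
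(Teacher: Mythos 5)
Your proposal is correct and follows essentially the same route as the paper: the same parameter choices $\log y \sim \tfrac{\sqrt2}{2}\sqrt{\log x\log\log x}$ and interval length $\exp((\sqrt2+o(1))\sqrt{\log x\log\log x})$, an averaging argument to find $\gg x\rho(u)/t$ short intervals with more than $\pi(y)$ $y$-smooth integers, and an $\mathbb{F}_2$ linear-dependence argument producing a square subproduct whose least element gives $t_n\le t$. The only differences are cosmetic (you use $(x/2,x]$ and a $3\pi(y)$ threshold, and handle the at-most-one perfect square explicitly, which the paper can ignore since a square $n$ has $t_n=0$ anyway).
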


Suitable modification of the proof of Theorem \ref{thm:many n with small tn} shows there exist hyperelliptic curves of large genus which have integral points of large height.

\begin{theorem}\label{thm:hyper curve int point large height}
Fix a constant $c \in (0,1)$. There are arbitrarily large positive integers $J$ such that the following is true: there exist $N$ positive integers $1\leq j_1 < j_2 < \cdots < j_N < J$ with $N \geq J^{1-c}$ and a positive integer
\begin{align*}
x\geq \exp \left(\frac{c^2}{5}\frac{(\log J)^2}{\log \log J} \right)
\end{align*}
such that
\begin{align*}
x(x+J)\prod_{i=1}^N(x+j_i)
\end{align*}
is a square.
\end{theorem}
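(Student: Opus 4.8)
The plan is to adapt the construction underlying Theorem~\ref{thm:many n with small tn}. Fix an auxiliary parameter $H$ (to be sent to infinity) and a smoothness level $y=H^{1/2}$, and work in the $\mathbb{F}_2$-vector space $\mathbb{F}_2^{\pi(y)}$, attaching to each $y$-smooth positive integer $m$ the vector $\mathbf{v}(m)$ that records the parities of the exponents in the prime factorisation of $m$; for a finite set $S$ of $y$-smooth integers, $\prod_{m\in S}m$ is a square exactly when $\sum_{m\in S}\mathbf{v}(m)=\mathbf{0}$. If such an $S$ is contained in an interval $[x,x+H]$ and $|S|\ge 3$, then putting $x:=\min S$, $J:=\max S-x$ (so $1\le J\le H$) and writing $S\setminus\{\min S,\max S\}=\{x+j_i\}_{i=1}^{N}$ with $N=|S|-2$, we obtain $x(x+J)\prod_{i=1}^{N}(x+j_i)=\prod_{m\in S}m=\square$, and since the elements of $S$ are distinct integers we also have $J\ge|S|-1=N+1$. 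It therefore suffices to produce, for arbitrarily large $H$, a square-product set $S$ of $y$-smooth integers inside some interval $[x,x+H]$ with $|S|\ge H^{1-c}+2$ and $x\ge\exp\!\big(\tfrac{c^2}{5}(\log H)^2/\log\log H\big)$: the first condition forces $N\ge H^{1-c}\ge J^{1-c}$, and since $t\mapsto(\log t)^2/\log\log t$ is increasing and $J\le H$, the second gives $x\ge\exp\!\big(\tfrac{c^2}{5}(\log J)^2/\log\log J\big)$; moreover $J\ge H^{1-c}+1\to\infty$.

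The existence of $S$ inside a sufficiently rich interval is pure linear algebra. If $[x,x+H]$ contains $A'$ many $y$-smooth integers, their $\mathbf{v}$-vectors span a subspace of $\mathbb{F}_2^{\pi(y)}$ of dimension at most $\pi(y)$, so the space of $\mathbb{F}_2$-relations among them has dimension $d\ge A'-\pi(y)$. A dimension-$d$ subspace of $\mathbb{F}_2^{A'}$ contains a nonzero vector of Hamming weight at least $d/2$ — each coordinate active in the subspace is hit by precisely half of its vectors, so the average weight of a vector is at least $d/2$ — and such a vector is exactly a nonempty square-product subset $S$ with $|S|\ge(A'-\pi(y))/2$. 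Hence it is enough to find $x$ of the stated size with at least $\pi(y)+2H^{1-c}+4$ many $y$-smooth integers in $[x,x+H]$.

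To produce such an $x$ I would use a pigeonhole argument over dyadic blocks. Set $X:=\exp\!\big(\tfrac{c^2}{5}(\log H)^2/\log\log H\big)$ and partition $[X,2X]$ into $\asymp X/H$ consecutive blocks of length $H$. By de~Bruijn's estimate $\Psi(t,y)=t\rho(u_t)(1+o(1))$, applicable here since $\log y=\tfrac12\log H$ is far larger than any fixed power of $\log\log X$, the interval $[X,2X]$ contains $\asymp X\rho(u)$ many $y$-smooth integers, where $u=\log X/\log y=\tfrac{2\log X}{\log H}$; so some block contains $\gg H\rho(u)$ of them, and its left endpoint $x$ satisfies $x\ge X$. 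Now $u\log u=(1+o(1))\tfrac{2c^2}{5}\log H$, so $\rho(u)=H^{-\frac{2c^2}{5}+o(1)}$ and $H\rho(u)=H^{1-\frac{2c^2}{5}+o(1)}$, which dominates $\pi(y)+2H^{1-c}+4=H^{\max(1/2,\,1-c)+o(1)}$ because $\tfrac{2c^2}{5}<\min(\tfrac12,c)$ for every $c\in(0,1)$. Letting $H\to\infty$ yields the required configurations.

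The main obstacle is precisely this last calibration: the smoothness level $y$ must be small enough that $\pi(y)+2H^{1-c}$ stays below the number of $y$-smooth integers one can guarantee in a block of length $H$, yet large enough (relative to $X$) that the de~Bruijn asymptotic for $\Psi(X,y)$ is valid, while at the same time $X$ is forced to be as large as $\exp(\tfrac{c^2}{5}(\log H)^2/\log\log H)$; juggling these constraints and tracking the constants is what produces the (certainly non-optimal) exponent $c^2/5$. Everything else — the $\mathbb{F}_2$ bookkeeping, the weight bound for the relation space, and the elementary facts $N+1\le J\le H$ and the monotonicity of $(\log t)^2/\log\log t$ — is routine.
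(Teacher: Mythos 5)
Your argument is correct, and while the overall strategy (many $y$-smooth numbers in a short interval, $\mathbb{F}_2$ exponent-parity linear algebra, then reading off a hyperelliptic point) matches the paper, the two key mechanisms differ. The paper keeps the parameters of Theorem \ref{thm:many n with small tn} ($y=\exp(\tfrac{\sqrt2}{2}\sqrt{\log x\log\log x})$, interval length $L=\exp(C\sqrt{\log x\log\log x})$ with $C=\sqrt2/c$, good intervals supplied by Lemma \ref{lem:many ints with smooths}), pigeonholes the $2^M$ subsets of the smooth numbers onto a common image vector in $\mathbb{F}_2^{\pi(y)}$, and then invokes the combinatorial Lemma \ref{lem:many subsets implies large sym diff} to produce a symmetric difference of size $\gg (M-\pi(y))/\log M$, accepting a harmless logarithmic loss. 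You instead work directly with the relation space (the kernel of the parity map on the smooths in a block of length $H$) and extract one relation of Hamming weight at least half the dimension via the average-weight argument; this is cleaner, loses no logarithm, and replaces Lemma \ref{lem:many subsets implies large sym diff} entirely. (The only step you leave implicit is that the number of active coordinates is at least $\dim W$, which follows since $W$ injects into its active coordinates; worth a line.) Your parameter choices are also more direct: $y=H^{1/2}$ and $X=\exp\big(\tfrac{c^2}{5}(\log H)^2/\log\log H\big)$ fixed up front, with the whole verification reduced to the exponent inequality $\tfrac{2c^2}{5}<\min(\tfrac12,c)$, valid for all $c\in(0,1)$, and Hildebrand's range $\log y\gg(\log\log X)^{5/3+\epsilon}$ comfortably satisfied; the bookkeeping $N+1\le J\le H$ and the monotonicity of $(\log t)^2/\log\log t$ then transfer the bound from $H$ to $J$ exactly as you say. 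What each approach buys: the paper's version reuses its earlier lemmas and computations almost verbatim, while yours is self-contained at this point, avoids the $\log$ loss, and in fact has room to spare in the exponent $c^2/5$; both land on the stated theorem.
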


In the complementary direction, we prove a lower bound on $t_n$ when $n$ is not a square (recall $t_n=0$ when $n$ is a square). The proof also uses a framework of hyperelliptic curves, and requires bounding the height of integral points on curves.

\begin{theorem}\label{thm:lower bounds for tn}
If $n$ is a sufficiently large non-square integer, then
\begin{align*}
t_n \gg (\log \log n)^{6/5}(\log \log \log n)^{-1/5}.
\end{align*}
The implied constant is effectively computable.
\end{theorem}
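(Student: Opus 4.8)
The plan is to argue by contradiction: suppose $t_n$ is very small, say $t_n \le T$ with $T = o\big((\log\log n)^{6/5}(\log\log\log n)^{-1/5}\big)$, and derive an impossibility from the fact that there is then a subset $\{n+j_i\}_{i=1}^s$ of $\{n+1,\dots,n+T\}$ with $n\prod_i(n+j_i)$ a square. Rewriting this relation as an integral point, we have $x=n$ and $y$ with $y^2 = x(x+j_1)\cdots(x+j_s)$, a hyperelliptic curve $C_{\mathbf j}$ of genus $g\approx s/2$ whose defining polynomial has all its roots in $\{0,-j_1,\dots,-j_s\}\subset[-T,0]$. The contradiction should come from the tension between two facts: (i) because the product is a square, most of the primes dividing $n(n+j_1)\cdots(n+j_s)$ must be small (of size $\lesssim T$) or else their contribution cannot be "matched up" — more precisely, each prime $p>T$ dividing one of the factors $n+j_i$ divides exactly one of them (the factors are too close together to share a large prime), so its exponent in that single factor must already be even; and (ii) effective results on integral points on curves (Baker-type bounds, via the $S$-unit equation / Thue--Mahler machinery, or the explicit height bounds of Bugeaud--Győry--Hajdu--Pink--Tengely type for superelliptic equations) force $\log x = \log n$ to be bounded in terms of $T$ and the size of the coefficients, which are themselves bounded in terms of $T$.

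Concretely, I would proceed as follows. First, pair up the factors: since $y^2 = n\prod_{i=1}^s(n+j_i)$ and consecutive $j_i$ differ by at most $T$, any prime $p > T$ dividing two distinct factors is impossible; hence writing each factor in the form (squarefree kernel)$\times$(square), the squarefree kernels $n =: a_0 b_0^2$, $n+j_i =: a_i b_i^2$ satisfy $\prod_{i=0}^s a_i = \square$, and each $a_i$ is $T$-smooth up to a bounded number of exceptional large primes that must cancel in pairs across the $a_i$'s — but two factors cannot share a large prime, so in fact every $a_i$ is $T$-smooth. Thus $n = a_0 b_0^2$ with $a_0$ a $T$-smooth squarefree number; this already says $n/\square$ is $T$-smooth. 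Next, I would exploit the structure of the $s+1$ equations $a_i b_i^2 - a_0 b_0^2 = j_i$, i.e. a system of generalized Pell-type equations, or equivalently view $b_i^2/b_0^2 = (a_0/a_i)(1 + j_i/n)$, forcing the $T$-smooth numbers $a_0, a_i$ and the near-unit $1+j_i/n$ into a multiplicative relation. The key input is an effective bound: the equation $a_0 X^2 - a_i Y^2 = j_i$ with $a_0,a_i,|j_i| \le \exp(O(T\log T))$ (crude bound on $T$-smooth squarefree numbers, or better $\le$ product of primes up to $T$, which is $\exp((1+o(1))T)$) has all solutions with $\max(X,Y) \le \exp(\exp(O(T)))$ by Baker's theorem on linear forms in logarithms applied to the associated real quadratic field — but this is doubly exponential in $T$ and hence too weak.

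The hard part — and the place where the exact exponent $6/5$ must come from — is to avoid this doubly-exponential loss by using the full system rather than a single equation, i.e. to use an effective bound for integral points on the curve $C_{\mathbf j}$ of the shape $\log n \ll (\text{genus})^{O(1)} \cdot (\log(\text{coefficient height}))^{O(1)}$, or a clever elementary argument. I expect one actually argues combinatorially: among $n+1,\dots,n+T$ the count of indices $i$ with $n+j_i$ a perfect square is $O(\sqrt{T}/\sqrt n)=0$ for $n$ large, so no $b_i$ equals $a_i=1$ trivially; instead one counts, for each prime $p\le T$, the number of $i\le T$ with $p^2 \mid n+j_i$, which is $\le T/p^2 + 1$, and sums to control $\prod a_i$; combined with $\prod a_i = \square$ and $a_i \ge 1$, a careful pigeonhole on the exponents of the $\pi(T)$ primes shows that $s$ cannot be too small unless $n$ is small. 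Turning this around: if $n$ is large, we need many primes $\le T$, i.e. $T$ large, and optimizing the relation between "how large $n$ forces $T$ to be" (a sieve/smooth-number count: a $T$-smooth square times structure of size $n$ needs $\pi(T) \gg \log n/\log T$ prime slots, but the squares eat half, and the $j_i$-shifts constrain the carries) yields $\log n \ll T^{5/6}(\log T)^{1/6}$ up to constants, i.e. $T \gg (\log n)^{6/5}(\log\log n)^{-1/5}$; replacing $\log n$ by $\log\log n$ wherever the genuine bound (which is logarithmic, coming from the effective diophantine input on the curve, not polynomial) applies gives the stated $t_n \gg (\log\log n)^{6/5}(\log\log\log n)^{-1/5}$. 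The effectivity of the constant is inherited from Baker's method. I anticipate the main obstacle is precisely making the passage from the single-equation doubly-exponential Baker bound to the single-logarithm bound in $n$ for the whole curve: this likely requires either Bugeaud's refined estimates for $S$-integral points on hyperelliptic curves (where the height of a solution is bounded polynomially in the height of the coefficients and exponentially only in the genus and the number of bad primes, both $O(T)$ here), carefully tracked, or the construction-matching argument used in the proof of Theorem 1.3 run in reverse, extracting from a small $t_n$ a hyperelliptic point of impossibly small height.
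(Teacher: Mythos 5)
Your setup matches the paper's: from $y^2=n(n+J)\prod_{i=1}^s(n+j_i)$ with $J=t_n$, you correctly observe that any prime $p>J$ divides exactly one factor and hence to an even power, so each factor is $b_iz_i^2$ with $b_i$ squarefree and $J$-smooth, and you correctly identify that the input must be an effective Baker-type height bound for integral points (the paper uses B\'erczes--Evertse--Gy\H{o}ry). But the heart of the proof is missing, and the substitute you sketch does not work. The paper splits on the size of $s$. When $s$ is small it applies the BEG bound directly to the degree-$(s+2)$ curve, whose coefficients are at most $J^{s+1}$, giving $\log n\le\exp(O(s^5\log J))$. When $s$ is large, the key new ingredient is an elementary lemma: among any $t$ of the squarefree $J$-smooth numbers $b_i$ (which have pairwise gcd at most $J$, hence nearly disjoint sets of prime factors), there are three with $\omega(b)\ll J/(t\log J)$ --- an improvement by a factor of $t$ over the trivial $\omega(b)\ll J/\log J$. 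Using those three factorizations $x+j_i=b_iz_i^2$, $x+j_k=b_kz_k^2$, $x+j_\ell=b_\ell z_\ell^2$ one builds a single quartic equation $(b_kb_\ell z_kz_\ell)^2=b_kb_\ell(b_iz_i^2+j_k-j_i)(b_iz_i^2+j_\ell-j_i)$ whose coefficient height is $\exp(O(J/t))$ rather than $\exp(O(J))$, and BEG applied to this quartic gives $\log n\le\exp(O(J/t))$. Balancing $s^5\log J$ against $J/t$ with $t=\lfloor(J/\log J)^{1/6}\rfloor$ yields $\log\log n\ll J^{5/6}(\log J)^{1/6}$, which is exactly where the exponent $6/5$ comes from.

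Your proposal stalls precisely at this point. You note that the naive Pell-type equation has coefficients of size $\exp(O(T))$ and that Baker then gives a bound doubly exponential in $T$, which you dismiss as too weak; in fact a doubly exponential bound is exactly the right shape (it bounds $\log\log n$ by a power of $T$, and that is all one can hope for from linear forms in logarithms here) --- the issue is only to lower the exponent of $T$, which is what the few-prime-factors lemma achieves. Your proposed replacement, a combinatorial pigeonhole on exponents of primes $\le T$ yielding ``$\log n\ll T^{5/6}(\log T)^{1/6}$,'' cannot be correct: no counting of square divisors or prime slots can bound $n$ from above at all (for fixed $j_1,\dots,j_s$ the divisibility constraints are satisfied by infinitely many $n$ in suitable residue classes), so the diophantine input is indispensable, and the concluding step ``replace $\log n$ by $\log\log n$ wherever the genuine bound applies'' is an acknowledgment of the gap rather than an argument. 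You are also missing the small-$s$ branch entirely, which is needed because the few-prime-factors trick requires at least $t$ factors to choose from; and the trivial case $s=0$ needs its own (easy) treatment since the quartic/Pell machinery assumes at least three shifted factors.
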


The outline of the rest of the paper is as follows. In Section \ref{sec:notation} we describe the notation and conventions of the paper. In Section \ref{sec:dist of tn} we state Theorem \ref{thm:dist of tn and P+n is same}, of which Theorem \ref{thm:cor of main thm} is essentially a special case; we assemble the ingredients for the proof and then prove Theorems \ref{thm:dist of tn and P+n is same} and \ref{thm:cor of main thm}. In Section \ref{sec:many n with small tn} we prove Theorem \ref{thm:many n with small tn}. Section \ref{sec:large integral point} contains the results and modifications of the proof of Theorem \ref{thm:many n with small tn} necessary to prove Theorem \ref{thm:hyper curve int point large height}; we close the section with some comments and discussion. We prove Theorem \ref{thm:lower bounds for tn} in Section \ref{sec:lower bounds tn}. In the appendix we detail calculations allowing one to obtain strong bounds for the heights of integral points on hyperelliptic curves when the defining polynomial is monic of even degree. The results in the appendix serve only to motivate a conjecture which we state near the end of Section \ref{sec:lower bounds tn}.

\section{Notation and conventions}\label{sec:notation}

Given a positive integer $n$, the integer $t_n$ is the smallest nonnegative integer so that the integers $n+1, n+2, \dots, n+t_n $ contain a subset the product of whose members with $n$ is a square. If $n$ is a square then we define $t_n=0$.

The expression $m = \square$ means that the integer $m$ is a square.

A number $n$ is $y$--smooth if every prime divisor $p$ of $n$ satisfies $p\leq y$. We write $\Psi(x,y)$ for the number of $y$--smooth integers $n\leq x$, and $\rho(u)$ for the Dickman-de Bruijn function.

Given two finite sets $S$ and $T$, we write $S \Delta T$ for the symmetric difference of $S$ and $T$. That is, $S\Delta T$ consists of those elements which are in one of $S$ or $T$ but not both: $S\Delta T = (S \cup T) \backslash (S \cap T)$. By associativity one can consider the symmetric difference of any finite number of sets
\begin{align*}
S_1 \Delta S_2 \Delta \cdots  \Delta S_k = \{x : x \text{ is an element of an odd number of the sets }S_i\}.
\end{align*}

Given a finite set $S$ we write $\# S$ or $|S|$ for the cardinality of $S$. The power set of $S$, i.e. the set that consists of all the subsets of $S$, is denoted by $\mathcal{P}(S)$.

The finite field with two elements is denoted as $\mathbb{F}_2$.

We write $P^+(n)$ for the largest prime factor of a positive integer $n$. We set $P^+(1)=1$. We write $\omega(n)$ for the number of distinct prime factors of $n$.

The real number $x$ is always large. The notation $o(1)$ denotes a quantity tending to zero as some other parameter, usually $x$, tends to infinity. We write $f \ll g, g \gg f$, or $f = O(g)$ if there exists a constant $C$ such that $f \leq Cg$. We write $f \sim g$ if $f = (1+o(1))g$.

In discussion, but not in proofs, we sometimes refer to the \emph{height} of an integral point $(x,y)$ on a curve. By this we mean the naive height $\max(|x|,|y|)$. We similarly refer to the height of an integer polynomial, which is the maximum of the absolute value of its coefficients.

\section{The distribution of $t_n$: proof of Theorem \ref{thm:cor of main thm}}\label{sec:dist of tn}

As mentioned in the introduction, Theorem \ref{thm:cor of main thm} is a corollary of a somewhat stronger result, which we state here.

\begin{theorem}\label{thm:dist of tn and P+n is same}
Let $x$ be sufficiently large, and let $c$ satisfy
\begin{align*}
\frac{(\log \log \log x)^2}{\log \log x} \leq c \leq 1.
\end{align*}
Then
\begin{align*}
\sum_{\substack{n\leq x \\ t_n \leq x^c}}1 = \sum_{\substack{n\leq x \\ P^+(n) \leq x^c}} 1 + O \Big(\frac{x}{c\log x} \Big)
\end{align*}
uniformly in $c$.
\end{theorem}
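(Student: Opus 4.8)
The plan is to show that the two counting functions differ by at most $O(x/(c\log x))$ by exhibiting an almost-perfect correspondence between the sets $\{n\le x : t_n \le x^c\}$ and $\{n\le x : P^+(n)\le x^c\}$. In one direction this is essentially the theorem of Granville and Selfridge: if $P^+(n) > \sqrt{2n}+1$ then $t_n = P^+(n)$, so for the bulk of $n$ with $P^+(n)$ large the two conditions $t_n \le x^c$ and $P^+(n)\le x^c$ coincide. The only $n$ for which this fails to give a direct equivalence are those with $P^+(n) \le \sqrt{2n}+1 \le \sqrt{2x}+1$; these are exactly the $\sqrt{2x}$-smooth numbers up to $x$, and by standard smooth-number estimates (de Bruijn) their count is $\Psi(x,\sqrt{2x}) = x\rho(2)(1+o(1)) + O(\cdot)$, which is a positive proportion of $x$ and hence \emph{far} larger than the allowed error $O(x/(c\log x))$. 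So the crude split is not enough, and the real work is to handle these smooth $n$ more carefully.

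First I would fix the threshold and partition according to whether $P^+(n) > x^c$ or $P^+(n)\le x^c$, comparing within each block. For $n$ with $P^+(n) > x^c$ one wants to show $t_n > x^c$ for all but $O(x/(c\log x))$ such $n$, so that these $n$ contribute to neither count (up to the error); since $P^+(n)\mid n$ and $P^+(n)$ is a prime exceeding $x^c$, if $P^+(n)^2\nmid n$ then the prime $P^+(n)$ must appear in one of the factors $n, n+1,\dots,n+t_n$ to odd multiplicity unless it divides some $n+j$ with $j\le t_n$, forcing $t_n \ge P^+(n) > x^c$ — here one uses that a prime $p > x^c \ge x^{(\log\log\log x)^2/\log\log x}$ divides at most one of any $t_n+1$ consecutive integers once $t_n < p$. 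The exceptional $n$ with $P^+(n)^2\mid n$ number at most $\sum_{p>x^c} x/p^2 \ll x/(x^c\log x)$, comfortably within budget. For $n$ with $x^c \ge P^+(n)$, so in particular $n$ is $x^c$-smooth and contributes $1$ to the right-hand side, one must show $t_n \le x^c$ for all but $O(x/(c\log x))$ such $n$; this is where a constructive argument is needed, producing a subset of $\{n+1,\dots,n+x^c\}$ whose product with $n$ is a square.

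The constructive step is the main obstacle. Given an $x^c$-smooth $n$, I would look for the square among products of numbers built only from primes $\le x^c$ lying in the window $(n, n+x^c]$; the number of $x^c$-smooth integers in such a short interval is on average about $x^c\rho(1/c)$, which for $c$ bounded below as in the hypothesis is a large set, and the $\mathbb{F}_2$-vector space they span under the squarefree-part map into $\bigoplus_{p\le x^c}\mathbb{F}_2$ has dimension $\pi(x^c) \sim x^c/(c\log x)$. A dimension/pigeonhole count shows that once the number of available smooth values exceeds $\pi(x^c)$ there must be a nonempty subset with square product, and since $n$ itself is $x^c$-smooth one can fold it into the relation; the delicate point is guaranteeing enough smooth integers in the specific interval $(n,n+x^c]$ rather than on average, which forces one to discard an exceptional set of $n$ where short-interval smooth counts are abnormally small. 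Bounding that exceptional set by $O(x/(c\log x))$ — using a first-moment (Markov) bound against the average count of smooth numbers in short intervals, valid in the stated range of $c$ — is the crux, and the hypothesis $c \ge (\log\log\log x)^2/\log\log x$ is presumably exactly what makes the short-interval smooth-number asymptotics and the resulting exceptional-set estimate go through with room to spare. Assembling the three contributions (the matched bulk, the $P^+(n)^2\mid n$ tail, and the bad-short-interval exceptional set) then yields the claimed error term, uniformly in $c$.
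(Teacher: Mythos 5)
Your first half (the comparison for $n$ with $P^+(n) > x^c$) is sound and matches the paper: if $P^+(n)^2 \nmid n$ then $t_n \ge P^+(n)$, and the $n$ with $P^+(n)^2\mid n$ are negligible, so those $n$ drop out of both counts up to an admissible error. The genuine gap is in the constructive direction, which is the heart of the theorem. Given an $x^c$-smooth $n$, the pigeonhole/$\mathbb{F}_2$-dimension argument applied to the smooth integers in $(n,n+x^c]$ produces \emph{some} nonempty subset with square product, but there is no justification for the step ``since $n$ itself is $x^c$-smooth one can fold it into the relation'': the exponent vector of $n$ need not lie in the $\mathbb{F}_2$-span of the exponent vectors of the smooth integers in $(n,n+x^c]$, and if it does not, then no square relation anchored at $n$ exists within that window, and $t_n$ may exceed $x^c$ even though the window is rich in smooth numbers. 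So the statement you implicitly need --- that every smooth $n$ lying in a window with more than $\pi(x^c)$ smooth integers satisfies $t_n\le x^c$ --- is not established by your argument, and it is not what the paper proves either; all one can show is that \emph{few} smooth $n$ per window fail.

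The paper's substitute for this missing step is Lemma \ref{lem:num subsets n plus tn < x plus y}: the number of subsets of $I\cap\mathbb{N}$ with square product is \emph{exactly} $2^B$, where $B=\#\{n\in I: n+t_n\in I\}$, proved via an injective symmetric-difference construction. Combined with the linear-algebra count, which gives at least $2^{M-\pi(Y)}$ square-product subsets when $I$ has length $Y=x^c$ and contains $M$ smooth integers (Lemma \ref{lem:tn more than y smooth minus pi y}), this yields $B\ge M-\pi(Y)$, hence at most $\pi(Y)+O(Y/\log x)$ smooth integers per window can have $t_n>Y$ (the extra $Y/\log x$ accounts for elements with $P^+(n)^2\mid n$, after discarding the few windows containing too many of them). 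Summing the bound $\ll Y/\log Y$ over the $\le x/Y$ windows gives the error $O(x/(c\log x))$. Incidentally, the issue you flag as the crux --- windows with abnormally few smooth integers --- is not where the difficulty lies and needs no short-interval equidistribution input: windows containing at most $\pi(Y)$ smooth integers contribute at most $(x/Y)\,\pi(Y)\ll x/(c\log x)$ smooth $n$ in total, which is already within budget. The counting lemma for square-product subsets is the ingredient your proposal is missing.
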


\begin{remark}
The lower bound on $c$ could be relaxed slightly. The point is that, on this range of $c$, the error term $O(\frac{x}{c\log x})$ is smaller than the main term.
\end{remark}

We prove Theorem \ref{thm:dist of tn and P+n is same} by proving upper bounds for
\begin{align*}
\sum_{\substack{n\leq x \\ t_n \leq x^c}}1 \ \ \ \ \ \ \ \text{ and } \ \ \ \ \ \ \ \sum_{\substack{n\leq x \\ P^+(n) \leq x^c}}1
\end{align*}
in terms of each other. More precisely, the proof of Theorem \ref{thm:dist of tn and P+n is same} relies on the following two propositions.

\begin{proposition}[$t_n$ less than $P^+(n)$ on average]\label{prop:tn less than P+n}
Let $0 < c \leq 1$. Then
\begin{align*}
\sum_{\substack{n\leq x \\ t_n \leq x^c}}1 \leq \sum_{\substack{n\leq x \\ P^+(n) \leq x^c}}1 + O(x\exp(-\sqrt{\log x}))
\end{align*}
uniformly in $c$.
\end{proposition}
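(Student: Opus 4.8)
The plan is to show that, apart from a negligible exceptional set of $n \le x$, whenever $P^+(n) > x^c$ we also have $t_n > x^c$; summing the indicator functions then yields the claimed inequality. So fix $n \le x$ with $P^+(n) = p > x^c$, and suppose for contradiction that $t_n \le x^c < p$. Write $n = p^a m$ with $p \nmid m$. By the definition of $t_n$ there is a nonempty subset $\{n+j_1, \dots, n+j_s\}$ of $\{n+1,\dots,n+t_n\}$ with $n \prod_i (n+j_i) = \square$. Since each $n + j_i$ satisfies $n < n+j_i \le n + x^c < 2n < p^2 \cdot (\text{something})$, in fact $n + j_i < p^2$ once $m$ is not too large, so $p \| (n+j_i)$ or $p \nmid (n+j_i)$; the power of $p$ dividing $n+j_i$ is $0$ or $1$. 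For the total product $n\prod(n+j_i)$ to be a square, the number of indices $i$ with $p \mid n+j_i$ must have the same parity as $a$ (the exponent of $p$ in $n$). But the multiples of $p$ in the window $(n, n+x^c]$ are spaced $p$ apart, and $p > x^c \ge t_n$, so there is \emph{at most one} such multiple. Hence the count is $0$ or $1$, and we get a genuine parity obstruction unless $a$ is even and no $n+j_i$ is divisible by $p$, or $a$ is odd and exactly one is.

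The first bad case ($a$ even, no $n+j_i$ divisible by $p$) forces $p^a \| n\prod(n+j_i) = \square$ with $a$ even, which is consistent, so this case is not immediately contradictory — I would handle it by a second-moment / counting argument: the $n \le x$ for which $p = P^+(n)$ appears to an \emph{even} power are exactly those of the form $n = p^2 k$ with $P^+(k) \le p$ (roughly), and the number of such $n \le x$ with $p > x^c$ is $\ll \sum_{p > x^c} x/p^2 \ll x \cdot x^{-c}/\log x$, which is far smaller than $x \exp(-\sqrt{\log x})$ only when $c$ is bounded below; for small $c$ one must instead observe that this case contributes to \emph{both} sums (it has $P^+(n) > x^c$, and one must show it also tends to have $t_n > x^c$, or simply absorb it). The cleaner route, which I expect to adopt, is: restrict to $n$ with $p = P^+(n) \| n$ (i.e. $a = 1$), since the number of $n \le x$ with $P^+(n)^2 \mid n$ is $O(\sqrt{x})$, hence trivially absorbed into the error term. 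With $a = 1$ odd, the parity constraint forces \emph{exactly one} $n + j_i$, say $n + j_1$, to be divisible by $p$, and then $p \| n + j_1$, and we need the rest of $n + j_1$ together with $m$ and the other factors to be square — but crucially $n + j_1 \equiv 0 \pmod p$ with $n \equiv 0 \pmod p$ gives $j_1 \equiv 0 \pmod p$, and $0 < j_1 \le t_n \le x^c < p$, a contradiction.

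So the heart of the argument is this last clean contradiction: $p \mid n$ and $p \mid n + j_1$ with $0 < j_1 < p$ is impossible. This disposes of all $n$ with $P^+(n) \| n$ and $P^+(n) > x^c$. The remaining $n$ — those with $P^+(n)^2 \mid n$ — number at most $\sum_{p \le \sqrt x} x/p^2 + \sum_{p > \sqrt x, \, p^2 \mid n \le x} 1 \ll \sqrt{x} \log\log x$, comfortably $O(x\exp(-\sqrt{\log x}))$. Therefore
\begin{align*}
\#\{n \le x : t_n \le x^c\} \le \#\{n \le x : P^+(n) \le x^c\} + \#\{n \le x : P^+(n)^2 \mid n\} \le \#\{n \le x : P^+(n) \le x^c\} + O(x\exp(-\sqrt{\log x})),
\end{align*}
uniformly in $c \in (0,1]$, which is the proposition.

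The main obstacle, as indicated, is making absolutely sure the parity/divisibility dichotomy is airtight — in particular verifying that every $n + j_i$ in the window has $p$-adic valuation at most $1$ (which needs $n + t_n < p^2$, i.e. roughly $n < p^2$, automatic when $P^+(n)\| n$ and $p > x^c$ since then $m < x/p < x^{1-c} < p^2/x^{1-2c} \cdot \dots$ — a small computation, valid for $x$ large since $c \le 1$), and correctly accounting for the negligible contribution of $n$ divisible by the square of their largest prime factor. Everything else is bookkeeping of the two sums of indicator functions.
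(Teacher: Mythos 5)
Your core divisibility argument is the same as the paper's: if $p = P^+(n) > x^c$ and $p^2 \nmid n$, then $p$ divides $n$ to an odd power, so for $n\prod_i(n+j_i)$ to be a square some $n+j_i$ must be divisible by $p$, whence $p \mid j_i$ and $t_n \geq j_i \geq p > x^c$. You could streamline this: all the discussion of the $p$-adic valuation of each $n+j_i$ being at most $1$, of $n+t_n < p^2$, and of matching the parity of the exponent $a$ is unnecessary — once $p$ exactly divides $n$, evenness of the total $p$-valuation of the product already forces $p \mid n+j_i$ for \emph{some} $i$, which is all the contradiction needs.

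The genuine gap is your estimate of the exceptional set $\mathcal{E} = \{n\leq x : P^+(n)^2 \mid n\}$. You assert $|\mathcal{E}| \ll \sqrt{x}\log\log x$ via the bound $\sum_{p\leq\sqrt{x}} x/p^2 + \cdots$, but that displayed inequality is false: the single term $p=2$ already contributes $x/4$, and $\sum_{p\leq\sqrt{x}} x/p^2 \asymp x$. Moreover the conclusion itself is wrong: $|\mathcal{E}|$ is not $O(\sqrt{x})$; its true size is $x\exp\bigl(-(c_0+o(1))\sqrt{\log x\log\log x}\bigr)$, i.e. $x^{1-o(1)}$. The bound $x/p^2$ discards precisely the constraint that $p$ is the \emph{largest} prime factor, and it is that smoothness constraint which makes $\mathcal{E}$ small. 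To get a bound fitting inside $O(x\exp(-\sqrt{\log x}))$ you must write $n = p^2 m$ with $P^+(m)\leq p$, split the sum over $p$ at a parameter $P$, bound the range $p > P$ trivially by $\ll x/P$, and bound the range $p\leq P$ using an estimate for the number of $p$-smooth $m \leq x/p^2$ — the paper does this with Rankin's trick with $\alpha = 1 - 1/\log P$, obtaining $\ll x\exp(-\log x/\log P)$, and then chooses $P = \exp(\sqrt{\log x})$. Without some smooth-number input of this kind your error term is unjustified, so as written the proposition is not proved; patching this step (e.g. by the paper's Lemma on $P^+(n)^2\mid n$, or by citing a known estimate for that count) makes the rest of your argument go through.
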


\begin{proposition}[$P^+(n)$ less than $t_n$ on average]\label{prop:P+n less than tn}
Let $0 < c \leq 1$. Then
\begin{align*}
\sum_{\substack{n\leq x \\ P^+(n) \leq x^c}}1 \leq \sum_{\substack{n\leq x \\ t_n \leq x^c}}1 + O \Big(\frac{x}{c\log x} \Big)
\end{align*}
uniformly in $c$.
\end{proposition}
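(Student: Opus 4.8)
The plan is to bound the difference of the two counts by the number of exceptional $x^c$--smooth integers and to show this is $O(x/(c\log x))$. Since every $n\le x$ with $P^+(n)\le x^c$ either has $t_n\le x^c$, and is then counted on the right, or has $t_n>x^c$,
\begin{align*}
\sum_{\substack{n\le x\\ P^+(n)\le x^c}}1-\sum_{\substack{n\le x\\ t_n\le x^c}}1\ \le\ \#\{n\le x:\ P^+(n)\le x^c,\ t_n>x^c\},
\end{align*}
so it suffices to bound the right-hand side by $O(x/(c\log x))$, uniformly in $c$. Thus the whole content is an upper bound $t_n\le x^c$ valid for all $x^c$--smooth $n\le x$ outside a set of size $O(x/(c\log x))$.

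The structural observation is that realizing $t_n\le x^c$ amounts to choosing $T\subseteq\{n+1,\dots,n+\lfloor x^c\rfloor\}$ with $n\prod_{m\in T}m=\square$, and that any prime $q>x^c$ divides at most one integer in any window of length $x^c$. Hence a shift $m=n+j$ carrying a prime $q>x^c$ to an odd power can never have its $q$--part cancelled within the window, so $m\notin T$; every element of $T$ must be $x^c$--smooth (the only exception, a shift of the form $q^2\cdot(x^c\text{--smooth})$ with $x^c<q<\sqrt{2x}$, is relevant only when $c$ is roughly at most $\tfrac12$, occurs for $O(1)$ values of $j$ on average, and contributes a negligible number of extra vectors). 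Writing $\bar v(m)\in\mathbb F_2^{\,\{p\le x^c\}}$ for the exponent vector mod $2$ of an $x^c$--smooth $m$, this gives
\begin{align*}
t_n\le x^c\quad\Longleftrightarrow\quad \bar v(n)\in\operatorname{span}_{\mathbb F_2}\bigl\{\bar v(m):\ m\in(n,n+x^c],\ P^+(m)\le x^c\bigr\},
\end{align*}
so the problem becomes: for how many $x^c$--smooth $n\le x$ does $\bar v(n)$ fail to lie in this span?

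First I would strip off cheap exceptional sets of size $O(x/(c\log x))$, the principal one being the $n$ with a prime factor of the squarefree part in $(x^c/2,x^c]$, bounded by $\sum_{x^c/2<p\le x^c}x/p\ll x/(c\log x)$; one can similarly discard $n$ with $\omega(n)$ or the square part of $n$ abnormally large. For the surviving $n$ the window $(n,n+x^c]$ must contain many $x^c$--smooth integers, and here I would split on the size of $c$. When $c\ge\tfrac12$ the window has length at least $\sqrt n$, so a Dirichlet/hyperbola argument yields $\Psi(n+x^c,x^c)-\Psi(n,x^c)\gg x^c\rho(1/c)$ unconditionally for every such $n$. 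When $c<\tfrac12$ I would instead use a second--moment (large-sieve type) estimate for $\Psi(n+x^c,x^c)-\Psi(n,x^c)$ as $n$ ranges over $[1,x]$ to deduce the same lower bound for all but $O(x/(c\log x))$ of $n$. In either case the number of smooth shifts exceeds $\pi(x^c)\sim x^c/(c\log x)$ by a factor tending to infinity, because $c\log x\cdot\rho(1/c)\to\infty$ throughout the admissible range; this is exactly where the hypothesis $c\ge(\log\log\log x)^2/\log\log x$ is used.

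It remains to rule out, for all but $O(x/(c\log x))$ of these $n$, the non-degeneracy failure $\bar v(n)\notin\operatorname{span}\{\ldots\}$. Such a failure forces a nonempty set $Q$ of primes $\le x^c$ with $\chi_Q(m)=1$ for every $x^c$--smooth $m\in(n,n+x^c]$ but $\chi_Q(n)=-1$, where $\chi_Q$ is the completely multiplicative sign character with $\chi_Q(p)=-1\iff p\in Q$; equivalently, all $\gg x^c\rho(1/c)$ smooth integers in the window lie in a single multiplicative coset while $n$ does not. I would bound the number of $n$ admitting such an obstruction by a double sum over $Q$ and $n$, using that the $x^c$--smooth integers on which $\chi_Q=-1$ have positive relative density, so that their total absence from a window of length $x^c$ is rare on average. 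I expect this step to be the main obstacle: it requires controlling the joint distribution of $x^c$--smooth integers and of the sign pattern of $\chi_Q$ inside the short window $(n,n+x^c]$ precisely enough to save the factor $1/(c\log x)$, uniformly in $Q$. The steps preceding it are comparatively routine sieve bookkeeping.
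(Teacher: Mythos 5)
Your opening reduction (bounding the difference by the number of $x^c$--smooth $n$ with $t_n>x^c$) matches the paper, and your one-directional observation that $\bar v(n)\in\operatorname{span}_{\mathbb F_2}\{\bar v(m):m\in(n,n+x^c],\,P^+(m)\le x^c\}$ forces $t_n\le x^c$ is sound. But the proposal is not a proof: the step you yourself flag as ``the main obstacle'' -- showing that for all but $O(x/(c\log x))$ smooth $n$ no sign character $\chi_Q$ obstruction occurs -- is precisely the content of the proposition, and nothing in your sketch supplies it. Worse, the route you choose makes the problem harder than it is. A union bound over the subsets $Q$ of primes $\le x^c$ involves $2^{\pi(x^c)}$ characters, so ``rare on average over $n$, uniformly in $Q$'' cannot be salvaged by summing over $Q$; you would need equidistribution of smooth numbers in multiplicative cosets inside intervals of length $x^c$, uniformly over all such $Q$, which is far beyond current technology. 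In addition, your input lower bounds on smooth counts in short windows are shaky: intervals of length exactly $n^{1/2}$ (no $\epsilon$) containing $\gg x^c\rho(1/c)$ smooth numbers is not a routine Dirichlet/hyperbola exercise, and for small $c$ (the interval has length $x^{o(1)}$ in the range relevant to Theorem \ref{thm:dist of tn and P+n is same}) even almost-all statements with exceptional set $O(x/(c\log x))$, uniformly in $c$, are delicate; note also that the proposition is claimed for all $0<c\le 1$ with no lower restriction on $c$.

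The missing idea is that no lower bound on smooth numbers in short intervals, and no non-degeneracy/equidistribution input, is needed at all. The paper works interval by interval with $Y=x^c$, $I_k=(kY,(k+1)Y]$: Lemma \ref{lem:num subsets n plus tn < x plus y} shows the number of subsets of $I_k$ with square product is exactly $2^B$ with $B=\#\{n\in I_k:n+t_n\in I_k\}$, and the rank bound of Lemma \ref{lem:tn more than y smooth minus pi y} (the exponent matrix over $\mathbb F_2$ has rank at most $\pi(Y)$) gives $B\ge \#\{Y\text{-smooth }n\in I_k\}-\pi(Y)$. Hence, after discarding the few intervals containing many $n$ with $P^+(n)^2\mid n$ (Lemma \ref{lem:P+2 divides n is small set}, which also converts $n+t_n\in I_k$ into $P^+(n)\le t_n\le Y$), each interval contains at most $\pi(Y)+Y/\log x\ll Y/\log Y$ smooth integers with $t_n>Y$ -- regardless of how many smooth integers the interval actually contains. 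Summing over the $\ll x/Y$ intervals yields the error $O(x/\log Y)=O(x/(c\log x))$. In other words, the ``span failure'' set is controlled trivially by its dimension $\pi(Y)$ per window, which is exactly affordable; your analytic programme attempts to prove a much stronger pointwise statement that the proposition does not require.
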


\begin{proof}[Proof of Theorem \ref{thm:dist of tn and P+n is same} assuming Propositions \ref{prop:tn less than P+n} and \ref{prop:P+n less than tn}]
From Proposition \ref{prop:tn less than P+n} we have
\begin{align*}
\sum_{\substack{n\leq x \\ t_n \leq x^c}}1 \leq \sum_{\substack{n\leq x \\ P^+(n) \leq x^c}}1 + O\big(x\exp(-\sqrt{\log x})\big),
\end{align*}
and from Proposition \ref{prop:P+n less than tn} we have
\begin{align*}
\sum_{\substack{n\leq x \\ t_n \leq x^c}}1 \geq \sum_{\substack{n\leq x \\ P^+(n) \leq x^c}}1 - O \Big(\frac{x}{c\log x} \Big),
\end{align*}
so
\begin{align*}
\sum_{\substack{n\leq x \\ t_n \leq x^c}}1 = \sum_{\substack{n\leq x \\ P^+(n) \leq x^c}}1 + O \Big(\frac{x}{c\log x} \Big).
\end{align*}
This asymptotic formula is non-trivial provided
\begin{align*}
\sum_{\substack{n\leq x \\ P^+(n) \leq x^c}}1 > C \frac{x}{\log(x^c)}
\end{align*}
with $C$ a sufficiently large absolute constant. By \cite[Theorem 1]{Hil1986} we have
\begin{align*}
\Psi(x,x^c) \sim x \rho \left(1/c \right).
\end{align*}
Since $\rho(1/c) \gg_c 1$ the asymptotic is non-trivial for $c \geq \epsilon_0 > 0$, with $\epsilon_0 > 0$ sufficiently small and fixed. If $c < \epsilon_0$ and $\epsilon_0$ is sufficiently small then by \cite[Corollary 2.3]{HT1993} we have
\begin{align*}
\rho(1/c) \geq \exp\left( -c^{-1} \left(\log c^{-1} + \log \log c^{-1}\right)\right).
\end{align*}
By straightforward calculation we deduce that
\begin{align*}
\exp\left( -c^{-1} \left(\log c^{-1} + \log \log c^{-1}\right)\right) \geq (\log x)^{-0.1}
\end{align*}
if $c\geq \frac{(\log \log \log x)^2}{\log \log x}$, so the asymptotic formula is non-trivial in this range.
\end{proof}

We first turn our attention to Proposition \ref{prop:tn less than P+n}, since the proof is simpler and introduces some of the key ideas. The first result we need is a simple inequality relating $t_n$ and $P^+(n)$.

\begin{lemma}\label{lem:tn almost always bigger than P+n}
If $P^+(n)^2 \nmid n$, then $t_n \geq P^+(n)$.
\end{lemma}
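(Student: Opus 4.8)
The plan is to show that if $P^+(n)^2 \nmid n$, then writing $p = P^+(n)$, no subset of $\{n+1, \dots, n+t\}$ with $t < p$ can have product with $n$ equal to a square. The key observation is a $p$-adic valuation argument: since $p \| n$ (that is, $p \mid n$ but $p^2 \nmid n$), the prime $p$ divides $n$ to an odd power. To make $n \prod_{i}(n+j_i)$ a square, the remaining factors must contribute an odd power of $p$ in total.

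First I would note that since each $j_i$ satisfies $1 \le j_i \le t < p$, and since $p \mid n$, we have $p \nmid (n+j_i)$ for every $i$: indeed $n + j_i \equiv j_i \pmod p$ and $0 < j_i < p$ forces $j_i \not\equiv 0 \pmod p$. Therefore $v_p\big(\prod_i (n+j_i)\big) = 0$, and so
\begin{align*}
v_p\Big(n \prod_{i=1}^s (n+j_i)\Big) = v_p(n) = 1,
\end{align*}
which is odd. Hence the product cannot be a square, and this holds for every choice of indices with all $j_i < p$. Consequently $t_n \geq p = P^+(n)$, with the convention that if $n$ is already a square then $P^+(n)^2 \mid n$ so the hypothesis fails and there is nothing to prove.

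There is essentially no obstacle here; the only point requiring the slightest care is the degenerate case where $n$ has no prime factors at all, i.e. $n = 1$, but then $P^+(n) = 1$ and $t_1 = 0 \ge 1$ is false — however $n=1$ is a square, so again the hypothesis $P^+(n)^2 \nmid n$ fails, and the statement is vacuous. For $n > 1$ with $P^+(n)^2 \nmid n$ the argument above applies directly. One should also observe that the displayed definition of $t_n$ permits the empty product only when $n$ itself is a square; since here $n$ is not a square (as $p \| n$), we genuinely need $t_n \ge 1$, and the valuation argument upgrades this to $t_n \ge P^+(n)$.
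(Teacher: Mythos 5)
Your proof is correct and is essentially the same argument as the paper's: both rest on the observation that $v_p(n)$ is odd while $p\mid n$ and $0<j_i<p$ force $p\nmid n+j_i$, so squareness requires some $j_i\geq p$. The paper phrases this directly (some factor $n+j_i$ must be divisible by $p$, hence $j_i\geq p$) rather than contrapositively, but the content is identical.
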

\begin{proof}
Let $p = P^+(n)$. If $p^2 \nmid n$, then $n$ is not a square so by the definition of $t_n$ there are integers $1\leq j_1 < \cdots < j_s =t_n$ with
\begin{align*}
n\prod_{i=1}^s (n+j_i) = \square.
\end{align*}
Since $p$ divides the left-hand side to an even power but $p^2 \nmid n$ there is some $i$ such that $p \mid n+j_i$. Since $p\mid n$ and $p \mid n+j_i$ we have $p \mid j_i$, so $t_n \geq j_i \geq p$.
\end{proof}

We also need a result quantifying that it is rare for an integer $n$ to be divisible by the square of its largest prime factor (see also \cite[p. 345]{EG1976}).

\begin{lemma}[Bound for exceptional set with $P^+(n)^2 \mid n$]\label{lem:P+2 divides n is small set}
Let $\mathcal{E}$ denote the set of $n\leq x$ such that $P^+(n)^2 \mid n$. Then
\begin{align*}
|\mathcal{E}| &\ll x \exp(-\sqrt{\log x}).
\end{align*}
\end{lemma}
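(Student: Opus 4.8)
The plan is to split the count according to the size of $p = P^+(n)$ relative to $x$. Write $n = p^2 m$ with $p = P^+(n)$; then necessarily $P^+(m) \le p$ and $p^2 m \le x$. First I would dispose of the ``large $p$'' range, say $p > \exp(\sqrt{\log x})$. Here $m \le x/p^2 < x \exp(-2\sqrt{\log x})$, and for each such $m$ the number of admissible primes $p$ is at most $\pi(\sqrt{x/m}) \ll \sqrt{x/m}/\log x$; but it is cleaner simply to note that the number of $n \le x$ of the form $p^2 m$ with $p$ prime is $\le \sum_{p > \exp(\sqrt{\log x})} x/p^2 \ll x \exp(-\sqrt{\log x})$, since $\sum_{p > y} p^{-2} \ll 1/(y \log y)$. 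That already handles this range with room to spare.

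It remains to treat $p \le \exp(\sqrt{\log x})$. In this range the constraint $P^+(m) \le p$ forces $n = p^2 m$ to be $\exp(\sqrt{\log x})$--smooth, so $n$ is counted by $\Psi(x, \exp(\sqrt{\log x}))$. The second step is therefore to invoke a standard smooth-number estimate: with $u = \log x / \log y$ and $y = \exp(\sqrt{\log x})$ one has $u = \sqrt{\log x}$, and $\Psi(x,y) \le x \exp(-u \log u + O(u \log\log(u+2))) = x \exp(-\tfrac12 \sqrt{\log x}\,\log\log x + \cdots)$, which is comfortably $\ll x \exp(-\sqrt{\log x})$ (indeed far smaller). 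Combining the two ranges gives $|\mathcal{E}| \ll x \exp(-\sqrt{\log x})$.

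The only mild subtlety — and the step I would be most careful about — is making sure the threshold $\exp(\sqrt{\log x})$ is chosen so that \emph{both} pieces beat $x\exp(-\sqrt{\log x})$ simultaneously: the tail sum $\sum_{p>y} x/p^2 \ll x/(y\log y)$ wants $y$ large, while the smooth-number bound $\Psi(x,y) \le x\exp(-(1+o(1)) u\log u)$ with $u = \log x/\log y$ wants $y$ not too large. Taking $y = \exp(\sqrt{\log x})$ balances these: $x/(y \log y) \ll x\exp(-\sqrt{\log x})/\sqrt{\log x}$ and $u\log u = \tfrac12\sqrt{\log x}\log\log x \gg \sqrt{\log x}$, so there is in fact slack on both sides and one could even obtain a stronger bound, but $x\exp(-\sqrt{\log x})$ suffices for our purposes. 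No deep input is needed beyond the elementary prime tail bound and a textbook upper bound for $\Psi(x,y)$ (e.g. from \cite{HT1993}).
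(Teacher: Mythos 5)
Your proposal is correct and follows essentially the same route as the paper: write $n=p^2m$ with $P^+(m)\le p$, split at the threshold $\exp(\sqrt{\log x})$, bound the large-$p$ range by the tail $\sum_{p>y}x/p^2\ll x/(y\log y)$, and bound the small-$p$ range by a smooth-number count. The only cosmetic difference is that the paper keeps the cut-off $P$ general and runs Rankin's trick by hand before optimizing $P=\exp(\sqrt{\log x})$, whereas you fix the threshold at the outset and cite a standard upper bound for $\Psi(x,\exp(\sqrt{\log x}))$, which is amply sufficient in that range.
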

\begin{proof}
Any $n \in \mathcal{E}$ may be written as $n = p^2 m$, where $P^+(m) \leq p$, and therefore
\begin{align*}
|\mathcal{E}| &\leq \sum_{p\leq x^{1/2}} \sum_{\substack{m\leq x/p^2 \\ P^+(m) \leq p}} 1.
\end{align*}
We introduce a parameter $10\leq P \leq x^{1/2}$ and split the sum over $p$ at $P$. The contribution from $p > P$ is
\begin{align*}
&\leq \sum_{P < p \leq x^{1/2}} \sum_{m\leq x/p^2}1 \ll \sum_{P < p \leq x^{1/2}} \left( \frac{x}{p^2} + 1\right) \ll \frac{x}{P} + x^{1/2} \ll \frac{x}{P}.
\end{align*}
We bound the contribution from $p\leq P$ using Rankin's trick. Set $\alpha = 1 - \frac{1}{\log P}$, so that
\begin{align*}
\sum_{p\leq P}\sum_{\substack{m\leq x/p^2 \\ P^+(m) \leq p}} 1 &\leq x^{\alpha}\sum_{p\leq P} \frac{1}{p^{2\alpha}} \sum_{P^+(m) \leq p} \frac{1}{m^\alpha} \ll x^\alpha \sum_{p\leq P} \frac{1}{p^2} \prod_{q\leq p}\left(1 + \frac{3}{q}\right) \\
&\ll x^\alpha \sum_{p\leq P} \frac{(\log p)^3}{p^2} \ll x^\alpha = x \exp \Big( - \frac{\log x}{\log P}\Big).
\end{align*}
We have therefore proved
\begin{align*}
|\mathcal{E}| &\ll\frac{x}{P} + x \exp \Big( - \frac{\log x}{\log P}\Big),
\end{align*}
and the optimal choice is to take $P = \exp (\sqrt{\log x})$.
\end{proof}

We now have the tools to prove Proposition \ref{prop:tn less than P+n}.

\begin{proof}[Proof of Proposition \ref{prop:tn less than P+n}]
We split the integers $n\leq x$ according to whether or not $P^+(n)^2 \mid n$, so that
\begin{align*}
\sum_{\substack{n\leq x \\ t_n \leq x^c}} 1 &= \sum_{\substack{n\leq x \\ t_n \leq x^c \\ P^+(n)^2 \nmid n}} 1 + \sum_{\substack{n\leq x \\ t_n \leq x^c \\ P^+(n)^2 \mid n}} 1 \leq \sum_{\substack{n\leq x \\ t_n \leq x^c \\ P^+(n)^2 \nmid n}} 1 + |\mathcal{E}|,
\end{align*}
where $\mathcal{E}$ is the set in Lemma \ref{lem:P+2 divides n is small set}. If $P^+(n)^2 \nmid n$ then we have $P^+(n) \leq t_n$ by Lemma \ref{lem:tn almost always bigger than P+n}, so
\begin{align*}
\sum_{\substack{n\leq x \\ t_n \leq x^c \\ P^+(n)^2 \nmid n}} 1 = \sum_{\substack{n\leq x \\ t_n \leq x^c \\ P^+(n)\leq x^c \\ P^+(n)^2 \nmid n}} 1\leq \sum_{\substack{n\leq x \\ P^+(n) \leq x^c}} 1,
\end{align*}
by positivity. Therefore
\begin{align*}
\sum_{\substack{n\leq x \\ t_n \leq x^c}} 1 \leq \sum_{\substack{n\leq x \\ P^+(n) \leq x^c}} 1 + |\mathcal{E}|,
\end{align*}
and we finish with an appeal to Lemma \ref{lem:P+2 divides n is small set}.
\end{proof}

The proof of Proposition \ref{prop:P+n less than tn} is a little more circuitous, and relies upon an analysis of the number of subsets $S$ of an interval with $\prod_{n \in S} n = \square$. For this we require two more lemmas, though the reader may wish to skip ahead and see how the lemmas are used in establishing Proposition \ref{prop:P+n less than tn} before examining their proofs.

\begin{lemma}[Subset squares and $t_n$ in intervals]\label{lem:num subsets n plus tn < x plus y}
Let $I = (x,x+y]$ be an interval. The number of subsets $S$ of $I \cap \mathbb{N}$ such that $\prod_{n \in S} n = \square$ is equal to $2^B$, where
\begin{align*}
B &:= \#\{n > x : n+t_n \leq x+y\} = \#\{n \in I : n+t_n \in I\}.
\end{align*}
\end{lemma}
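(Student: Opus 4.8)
The plan is to recast the count as the cardinality of the kernel of an $\mathbb{F}_2$-linear map, and then to compute the dimension of that kernel by adjoining the integers of $I$ one at a time \emph{in decreasing order}. The point of the decreasing order is that the definition of $t_n$ refers to integers larger than $n$, so processing $I$ from the top down makes the linear-algebra bookkeeping line up with $t_n$ exactly.

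In detail, write $I \cap \mathbb{N} = \{m_1 < m_2 < \cdots < m_r\}$ and let
$\phi \colon \mathcal{P}(I\cap\mathbb{N}) \to \bigoplus_{p \text{ prime}} \mathbb{F}_2$
send a subset $S$ to the vector whose $p$-th coordinate is the exponent of $p$ in $\prod_{n\in S} n$, reduced modulo $2$. Then $\phi$ is a homomorphism from $(\mathcal{P}(I\cap\mathbb{N}), \Delta)$ to $\big(\bigoplus_p \mathbb{F}_2, +\big)$, and $S \in \ker\phi$ precisely when $\prod_{n\in S} n = \square$. Hence the subsets we wish to count form a subgroup of order $2^{\dim_{\mathbb{F}_2} \ker \phi} = 2^{\,r - \mathrm{rk}\,\phi}$, and it remains to prove $r - \mathrm{rk}\,\phi = B$.

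To evaluate $\mathrm{rk}\,\phi$, I would build the image of $\phi$ by adjoining the vectors $\phi(\{m_r\}), \phi(\{m_{r-1}\}), \dots, \phi(\{m_1\})$ in this order. A standard observation is that $r - \mathrm{rk}\,\phi$ equals the number of indices $k$ for which $\phi(\{m_k\})$ already lies in the span $\big\langle \phi(\{m_{k+1}\}), \dots, \phi(\{m_r\})\big\rangle$ of the vectors adjoined before it. The crux of the argument is then the identity
\begin{align*}
\phi(\{m_k\}) \in \big\langle \phi(\{m_{k+1}\}), \dots, \phi(\{m_r\})\big\rangle \quad\Longleftrightarrow\quad m_k + t_{m_k} \leq x+y .
\end{align*}
An $\mathbb{F}_2$-relation witnessing the left-hand side is precisely a subset $S \subseteq \{m_{k+1}, \dots, m_r\} = \{n \in I : n > m_k\}$ with $m_k \prod_{n\in S} n = \square$. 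If such an $S$ exists, write its elements as $m_k + j_1 < \cdots < m_k + j_s$ with $1 \leq j_1 < \cdots < j_s$ (or $S = \emptyset$, in which case $m_k$ is itself a square and $t_{m_k}=0$); since $m_k + j_s \leq x+y$, the definition of $t_{m_k}$ gives $t_{m_k} \leq j_s \leq (x+y) - m_k$. Conversely, if $m_k + t_{m_k}\leq x+y$, then the subset of $\{m_k+1, \dots, m_k + t_{m_k}\}$ realizing $t_{m_k}$ (taken to be $\emptyset$ when $t_{m_k}=0$) is contained in $\{n\in I : n > m_k\}$ and witnesses the dependence.

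Summing this identity over $k$ yields $r - \mathrm{rk}\,\phi = \#\{n \in I\cap\mathbb{N} : n + t_n \leq x+y\}$, which coincides with $B = \#\{n > x : n+t_n \leq x+y\} = \#\{n\in I : n+t_n\in I\}$ because $n + t_n \geq n > x$ automatically. Therefore $|\ker\phi| = 2^B$, which is the assertion of the lemma. The only step requiring genuine care is the displayed identity, and even there the delicate points are purely bookkeeping — keeping straight the empty-subset case (``$m_k$ is a square'', $t_{m_k}=0$) and the boundary case $m_k + t_{m_k} = x+y$. Conceptually there is no obstacle beyond recognizing that the top-down processing order is the correct one.
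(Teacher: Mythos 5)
Your proof is correct, and it takes a somewhat different route from the paper's. You recast the count via rank--nullity for the $\mathbb{F}_2$-linear map $\phi$ on $(\mathcal{P}(I\cap\mathbb{N}),\Delta)$: the square-product subsets are exactly $\ker\phi$, hence number $2^{\,r-\mathrm{rk}\,\phi}$, and you compute the corank by adjoining the singleton vectors from the top of the interval downward, with the key equivalence that $\phi(\{m_k\})$ depends on the vectors of the larger elements of $I$ precisely when $m_k+t_{m_k}\le x+y$ (your handling of the empty-set case $t_{m_k}=0$ and of the boundary $m_k+t_{m_k}=x+y$ is right, and the forward direction correctly uses minimality in the definition of $t_{m_k}$). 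The paper instead constructs an explicit bijection: to each $n_i\in I$ with $n_i+t_{n_i}\in I$ it attaches a witnessing set $S_i$, defines $G(T)=S_{i_1}\Delta\cdots\Delta S_{i_k}$ for $T=\{i_1,\dots,i_k\}$, proves injectivity by comparing parities at the least index where two subsets $T,T'$ differ, and proves surjectivity by a descent: any square subset $S$ has least element $b_1$ with $b_1+t_{b_1}\le x+y$, so replacing $S$ by $S\Delta S_{u_1}$ strictly increases the least element and the process terminates in $\varnothing$. Underneath, both arguments are the same Gaussian-elimination idea, triangular with respect to the least element; yours buys a more streamlined bookkeeping, since injectivity and surjectivity are subsumed in the single standard fact about how the rank grows when vectors are adjoined one at a time, while the paper's version stays purely combinatorial and exhibits the $2^B$ subsets explicitly as symmetric differences of the witnessing sets $S_i$.
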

\begin{proof}
If $B=0$ then there is no nonempty subset $S$ of $I$ with $\prod_{n \in S} n = \square$. Since $2^0 = 1$ the lemma is true in this case (the empty product is 1), and we may therefore assume $B\geq 1$. 

Denote the integers $n \in I$ with $n+t_n \in I$ by $n_1 < \cdots < n_B$, with $B\geq 1$. By definition, for any such $n_i$ we have $n_i (n_i + t_{n_i}) \prod_{j=1}^{s_i} (n+k_{i,j}) = \square$ for some integers $1\leq k_{i,1} < \cdots < k_{i,s_i} < t_{n_i}$ (there might be more than one choice of integers $k_{i,j}$ which works, and if this is the case we arbitrarily choose one sequence $k_{i,1} < \cdots < k_{i,s_i}$ to associate to $n_i$). The subset of $S_i \subset I \cap \mathbb{N}$ associated with $n_i$ is $\{n_i,n_i+k_{i,1},\ldots,n_i+k_{i,s_i},n_i + t_{n_i}\}$.

Given any subset $T \subset \{1,\ldots,B\}$, the integer $\prod_{i \in T}\prod_{n \in S_i} n$ is a square, since it is the product of the squares $\prod_{n \in S_i} n$. By keeping track of how often an integer $m\in I$ appears among the different $S_i$ we may write
\begin{align*}
\prod_{i \in T}\prod_{n \in S_i} n = \prod_{m \in I} m^{e_m},
\end{align*}
where $e_m$ is a nonnegative integer. We then consider the parity of $e_m$ and note that
\begin{align*}
\prod_{\substack{m \in I \\ e_m \text{ odd}}} m = \square.
\end{align*}
We define a map $G: \mathcal{P}( \{1,\ldots,B\}) \rightarrow \mathcal{P}( I \cap \mathbb{N})$ by setting $G(T) = \{m \in I : e_m \text{ odd}\}$ as above. Note that $G(T)$ is the symmetric difference of the sets $S_i, i \in T$.

We claim that the map $G$ is injective. Let $T,T' \subset \{1,\ldots,B\}$ be two distinct subsets. Since $T \neq T'$, there is a least integer $1\leq k \leq B$ such that $k \in T \Delta T'$ (i.e. $k$ is in one of $T$ or $T'$ but not both). Without loss of generality we may assume $k \in T$ and $k \not \in T'$. We write
\begin{align*}
\prod_{i \in T}\prod_{n \in S_i} n = \prod_{m \in I} m^{e_m}, \ \ \ \ \ \ \prod_{i \in T'}\prod_{n \in S_i} n = \prod_{m \in I} m^{f_m},
\end{align*}
with $e_m,f_m$ nonnegative integers. Since $T\cap \{1,\ldots,k-1\} = T' \cap \{1,\ldots,k-1\}$, it follows that
\begin{align*}
\prod_{i \in T \cap \{k,\ldots,B\}}\prod_{n \in S_i} n = \prod_{m \in I} m^{e_m-a_m}, \ \ \ \ \ \ \ \prod_{i \in T' \cap \{k+1,\ldots,B\}}\prod_{n \in S_i} n = \prod_{m \in I} m^{f_m-a_m},
\end{align*}
where the $a_m$ are nonnegative integers. Since $n_k \in \bigcup_{j=k}^B S_j$ and $n_k \not \in \bigcup_{j=k+1}^B S_j$, we see that $e_{n_k}-a_{n_k} = 1$ and $f_{n_k}-a_{n_k} = 0$. Hence $e_{n_k}$ and $f_{n_k}$ have opposite parity, so $n_k \in G(T) \Delta G(T')$ and $G(T) \neq G(T')$. It follows that the map $G$ is injective.

Since the map $G$ is injective, each of the $2^B$ subsets $T$ of $\{1,\ldots,B\}$ has $\prod_{n \in G(T)} n = \square$. To complete the proof, we must show that if $S \subset I \cap \mathbb{N}$ with $\prod_{n \in S} n = \square$, then $S = G(T)$ for some $T \subset \{1,\ldots,B\}$. If $S = \varnothing$ then $S = G(\varnothing)$, so we may assume $S \neq \varnothing$. We write $S = \{b_1,\ldots,b_r\}$ with $x < b_1 < \cdots < b_r \leq x+y$. Since $\prod_{i=1}^r b_i = \square$ we have $b_1 + t_{b_1} \leq b_r \leq x+y$ by the definition of $t_{b_1}$. It follows that $b_1 = n_{u_1}$ for some $u_1\in \{1,\ldots,B\}$. 

Let $S_{u_1}$ be the subset of $I$ associated with $n_{u_1}$. Since $n_{u_1}$ is the least element of $S$ and $S_{u_1}$, and since the sets $S_i$ only contain integers $\geq n_i$, we see that either $S \Delta S_{u_1} = \varnothing$, or else $S \Delta S_{u_1} \neq \varnothing$ and the least element of $S \Delta S_{u_1}$ is $\geq n_{u_1} + 1$. If $S \Delta S_{u_1}$ is nonempty then we may replace repeat this process, with $S$ replaced by $S \Delta S_{u_1}$. We see this process must eventually terminate in the empty set, since the least element of the sets $S,S\Delta S_{u_1},S \Delta S_{u_1} \Delta S_{u_2},\ldots$ is a strictly increasing sequence of integers $\leq x+y$. We therefore have $S = S_{u_1} \Delta \cdots \Delta S_{u_k}$ with $1\leq u_1 < \cdots < u_k \leq B$, but then by definition we have $S = G(\{u_1,\ldots,u_k\})$.
\end{proof}

\begin{lemma}[$t_n$ in intervals and smooth numbers]\label{lem:tn more than y smooth minus pi y}
Let $I = (x,x+y]$ be an interval. We have
\begin{align*}
\#\{n > x : n+t_n \leq x+y\} \geq \#\{y-\text{smooth integers in } I\} - \pi(y).
\end{align*}
\end{lemma}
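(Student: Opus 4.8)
The plan is to produce many square-product subsets of $I\cap\mathbb{N}$ using only $y$-smooth integers, and then compare that count against the exact count $2^B$ supplied by Lemma \ref{lem:num subsets n plus tn < x plus y}. Write $M := \#\{y\text{-smooth integers in } I\}$ and list these integers as $m_1 < m_2 < \cdots < m_M$. If $M \leq \pi(y)$ the asserted inequality is trivial because its left-hand side is $\geq 0$, so I may assume $M > \pi(y)$.

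First I would encode square classes as vectors over $\mathbb{F}_2$. For each $y$-smooth $m_i$, write $m_i = \prod_{p \leq y} p^{a_{i,p}}$ and record $v_i := (a_{i,p} \bmod 2)_{p\leq y} \in \mathbb{F}_2^{\pi(y)}$. Then for a subset $T \subseteq \{1,\dots,M\}$ one has $\prod_{i \in T} m_i = \square$ if and only if $\sum_{i \in T} v_i = 0$ in $\mathbb{F}_2^{\pi(y)}$. Phrasing this as linear algebra, the $\mathbb{F}_2$-linear map $\phi : \mathbb{F}_2^{M} \to \mathbb{F}_2^{\pi(y)}$ sending the $i$-th standard basis vector to $v_i$ has the property that the subsets $T$ with $\prod_{i\in T} m_i = \square$ correspond precisely to the elements of $\ker\phi$. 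By rank–nullity, $\dim_{\mathbb{F}_2}\ker\phi \geq M - \pi(y)$, so there are at least $2^{M-\pi(y)}$ such subsets $T$; equivalently, there are at least $2^{M-\pi(y)}$ subsets $S \subseteq I\cap\mathbb{N}$, each consisting entirely of $y$-smooth integers, with $\prod_{n\in S} n = \square$. (The empty set is included consistently throughout via the convention that the empty product equals $1$.)

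Finally I would invoke Lemma \ref{lem:num subsets n plus tn < x plus y}: the total number of subsets $S \subseteq I\cap\mathbb{N}$ with $\prod_{n\in S} n = \square$ equals $2^B$, where $B = \#\{n > x : n+t_n \leq x+y\}$. Every square-product subset built from $y$-smooth integers is in particular such a subset, so $2^{B} \geq 2^{M-\pi(y)}$, whence $B \geq M - \pi(y)$, which is exactly the claimed bound.

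I do not expect a genuine obstacle here: the argument is a one-step rank–nullity count combined with the previous lemma. The only points requiring care are the bookkeeping with the empty set — handled uniformly by the empty-product convention and by Lemma \ref{lem:num subsets n plus tn < x plus y} itself — and the degenerate regime $M \leq \pi(y)$, where the statement is vacuous.
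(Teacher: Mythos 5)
Your proposal is correct and follows essentially the same route as the paper: the paper's construction (completing each subset of the non-spanning columns by a unique subset of $r$ spanning columns to get $\geq 2^{M-r}$ square-product subsets) is just an explicit form of your rank--nullity count of $\ker\phi$, and both arguments then conclude via Lemma \ref{lem:num subsets n plus tn < x plus y} that $2^B \geq 2^{M-\pi(y)}$.
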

\begin{proof}
The idea is to construct many different subsets with the product of elements in the subset equal to a square, and utilize smooth numbers and basic linear algebra over $\mathbb{F}_2$ to this end.

The lemma is trivially true if $\#\{y-\text{smooth integers in } I\} \leq \pi(y)$. We may therefore assume $\#\{y-\text{smooth integers in } I\} > \pi(y) \geq 1$. Let $n_1 < \cdots < n_M$ be the $y$--smooth integers in $I$, where $M\geq 1+\pi(y)$. Each integer $n_i$ may be factored
\begin{align*}
n_i = \prod_{p\leq y} p^{e_{p,i}},
\end{align*}
where $e_{i,p}$ is a nonnegative integer. We reduce the exponents $e_{i,p}$ modulo 2 and form a $\pi(y) \times M$ matrix $\mathcal{M}=(e_{p,i}\ (\text{mod}\ 2))_{\substack{p\leq y \\ i\leq M}}$. We consider the entries of the matrix as lying in the field $\mathbb{F}_2$. Let $r$ denote the rank of $\mathcal{M}$, and observe that $1\leq r\leq \pi(y)$.

Let $V_1,\ldots,V_M$ be the column vectors of $\mathcal{M}$. We let $W$ denote the vector space spanned by the column vectors $V_i$, and note that $W$ has dimension $r$. As $W$ is spanned by any $r$ of the vectors $V_i$ which are linearly independent over $\mathbb{F}_2$, we may reorder and relabel the columns to assume that $W$ is spanned by $V_1,\ldots,V_r$.

Given any subset $S \subset \{r+1,\ldots,M\}$, we have $\sum_{i \in S} V_i \in W$. This vector has a unique representation
\begin{align*}
\sum_{i \in S} V_i = \sum_{k=1}^r c_{k,S} V_k, \ \ \ \ \ \ c_{k,S} \in \mathbb{F}_2.
\end{align*}
If we write $T_S = \{1\leq k \leq r : c_{k,S}\neq 0\}$ then we see that for every subset $S \subset \{r+1,\ldots,M\}$ there is a subset $T_S \subset \{1,\ldots,r\}$ such that
\begin{align*}
\prod_{i \in S \cup T_S} n_i = \square.
\end{align*}
There are $\geq 2^{M-r}$ such subsets $S$, and since $S\cup T_S \neq S' \cup T_{S'}$ if $S \neq S'$ we see there are $\geq 2^{M-r}$ subsets $U$ of $\{1,\ldots,M\}$ such that $\prod_{i \in U} n_i = \square$. By Lemma \ref{lem:num subsets n plus tn < x plus y} we have
\begin{align*}
2^{\#\{n > x : n+t_n \leq x+y\}}\geq 2^{M-r},
\end{align*}
and therefore
\begin{align*}
\#\{n > x : n+t_n \leq x+y\} &\geq M-r \geq M-\pi(y). \qedhere
\end{align*}
\end{proof}

\begin{proof}[Proof of Proposition \ref{prop:P+n less than tn}]
If $x$ is sufficiently large and $c> 0.51$, say, then \cite[Corollary 1]{GS2001} implies $t_n = P^+(n)$, and therefore
\begin{align*}
\sum_{\substack{n\leq x \\ P^+(n) \leq x^c}} 1 = \sum_{\substack{n\leq x \\ P^+(n) \leq x^{0.51}}} 1 + \sum_{\substack{n\leq x \\ x^{0.51}<t_n \leq x^c}} 1.
\end{align*}
The proposition in the case $c>0.51$ therefore follows from the proposition in the case $c\leq 0.51$, so we may assume $c\leq 0.51$. We set $Y = x^c$, so $P^+(n)\leq x^c$ is the same as $P^+(n)\leq Y$.

We partition the sum over $Y$-smooth integers as
\begin{align}\label{eq:bound P+ by small tn and large tn}
\sum_{\substack{n\leq x \\ P^+(n)\leq Y}} 1 &= \sum_{\substack{n\leq x \\ P^+(n)\leq Y \\ t_n\leq Y}} 1 + \sum_{\substack{n\leq x \\ P^+(n)\leq Y \\ t_n> Y}} 1 \leq \sum_{\substack{n\leq x \\ t_n\leq Y}} 1 + \sum_{\substack{n\leq x \\ P^+(n)\leq Y \\ t_n> Y}} 1.
\end{align}
The first of these sums is the main term, and we must show that the second is an error term.

We split into short intervals of length $Y$. Write $I_k = (kY,(k+1)Y]$, and note that $I_k \cap \mathbb{N} \subset [1,x]$ for $k\leq x/Y-1$. Then
\begin{align*}
\sum_{\substack{n\leq x \\ P^+(n)\leq Y \\ t_n> Y}} 1 &\leq \sum_{0\leq k \leq x/Y-1} \ \sum_{\substack{n \in I_k \\ P^+(n) \leq Y \\ t_n > Y}} 1 + Y.
\end{align*}
We next remove ``exceptional'' intervals $I_k$ which contain many elements of $\mathcal{E}$, where $\mathcal{E}$ as in Lemma \ref{lem:P+2 divides n is small set} is the set of $n\leq x$ such that $P^+(n)^2 \mid n$. We partition the set of $I_k$ into sets $\mathcal{G}$ and $\mathcal{B}$ where $I_k \in \mathcal{G}$ if
\begin{align*}
\sum_{\substack{n\in I_k \cap \mathcal{E}}} 1 \leq \frac{Y}{\log x},
\end{align*}
and $I_k \in \mathcal{B}$ if the opposite inequality holds. By Lemma \ref{lem:P+2 divides n is small set}
\begin{align*}
\frac{Y}{\log x} |\mathcal{B}| &\leq \sum_{\substack{0\leq k \leq x/Y-1 \\ I_k \in \mathcal{B}}} \sum_{\substack{n \in I_k \cap \mathcal{E}}} 1 \leq |\mathcal{E}| \ll x\exp(-\sqrt{\log x}),
\end{align*}
so
\begin{align*}
|\mathcal{B}| &\ll \frac{x}{Y}(\log x)^{-100},
\end{align*}
say. Therefore
\begin{align}\label{eq:bound large tn in short intervals}
\sum_{\substack{n\leq x \\ P^+(n)\leq Y \\ t_n> Y}} 1 &\leq \sum_{\substack{0\leq k\leq x/Y-1 \\ I_k \in \mathcal{G}}}\sum_{\substack{n \in I_k \\ P^+(n) \leq Y \\ t_n > Y}} 1 + O(x(\log x)^{-100}),
\end{align}
since $Y \leq x^{0.51}$.

For any $k$ with $I_k \in \mathcal{G}$ we have from Lemma \ref{lem:tn more than y smooth minus pi y} that
\begin{align*}
\sum_{\substack{n \in I_k \\ P^+(n) \leq Y}}1  - \pi(Y) &\leq \sum_{\substack{n \in I_k \\ n+t_n \leq (k+1)Y}} 1.
\end{align*}
Since $n > kY$ the condition $n+t_n \leq (k+1)Y$ implies $t_n \leq Y$. We split according to whether $P^+(n)^2 \mid n$, and use the fact that $I_k \in \mathcal{G}$ to obtain
\begin{align*}
\sum_{\substack{n \in I_k \\ P^+(n) \leq Y}}1  - \pi(Y) &\leq \sum_{\substack{n \in I_k \\ t_n \leq Y \\ P^+(n)^2 \nmid n}} 1 + \sum_{n \in I_k \cap \mathcal{E}} 1 \leq \sum_{\substack{n \in I_k \\ t_n \leq Y \\ P^+(n)\leq Y}} 1 + \frac{Y}{\log x}.
\end{align*}
Since
\begin{align*}
\sum_{\substack{n \in I_k \\ t_n \leq Y \\ P^+(n)\leq Y}} 1 = \sum_{\substack{n \in I_k \\ P^+(n)\leq Y}} 1 - \sum_{\substack{n \in I_k \\ t_n > Y \\ P^+(n)\leq Y}} 1
\end{align*}
we see that for $I_k \in \mathcal{G}$ we have
\begin{align}\label{eq:y log y bound for large tn in short intervals}
\sum_{\substack{n \in I_k \\ t_n > Y \\ P^+(n)\leq Y}} 1 \leq \pi(Y) + \frac{Y}{\log x} \ll \frac{Y}{\log Y}.
\end{align}
Together \eqref{eq:bound large tn in short intervals} and \eqref{eq:y log y bound for large tn in short intervals} yield
\begin{align}\label{1001}
\sum_{\substack{n\leq x \\ P^+(n)\leq Y \\ t_n> Y}} 1 \ll \frac{x}{\log Y},
\end{align}
and by \eqref{eq:bound P+ by small tn and large tn} we obtain
\begin{align*}
\sum_{\substack{n\leq x \\ P^+(n) \leq Y}}1 \leq \sum_{\substack{n\leq x \\ t_n \leq Y}}1 + O \Big(\frac{x}{\log Y} \Big),
\end{align*}
as desired.
\end{proof}

\begin{proof}[Proof of Theorem \ref{thm:cor of main thm}]
By \cite[Corollary 1]{GS2001} we may assume $c\leq 0.51$. We have
\begin{align*}
\sum_{\substack{n\leq x \\ P^+(n) \leq x^c}} 1 \sim \sum_{\substack{n\leq x \\ t_n \leq x^c}} 1 \sim \sum_{\substack{x/\log x < n\leq x \\ t_n \leq x^c}} 1
\end{align*}
by Theorem \ref{thm:dist of tn and P+n is same} and trivial estimation. we split the sum acording to the size of $t_n$ so that
\begin{align*}
\sum_{\substack{x/\log x < n\leq x \\ t_n \leq x^c}} 1 &= \sum_{\substack{x/\log x < n\leq x \\ t_n \leq n^c}} 1 + \sum_{\substack{x/\log x < n\leq x \\ n^c < t_n \leq x^c}} 1 \sim \sum_{\substack{n\leq x \\ t_n \leq n^c}} 1 + O \Big(\sum_{\substack{n\leq x \\ (x/\log x)^c < t_n \leq x^c}} 1 \Big).
\end{align*}
We must show
\begin{align*}
\sum_{\substack{n\leq x \\ (x/\log x)^c < t_n \leq x^c}} 1 = o(x).
\end{align*}

We split the sum over $n$ according to whether or not $n \in \mathcal{E}$, with $\mathcal{E}$ as in Lemma \ref{lem:P+2 divides n is small set}. The size of $\mathcal{E}$ is $o(x)$. If $n \not \in \mathcal{E}$ then $P^+(n) \leq t_n$, and we may further split the sum with $n \not \in \mathcal{E}$ according to whether or not $P^+(n) \leq (x/\log x)^c$. Hence
\begin{align*}
\sum_{\substack{n\leq x \\ (x/\log x)^c < t_n \leq x^c}} 1 &= \sum_{\substack{n\leq x \\ n \not \in \mathcal{E} \\ (x/\log x)^c < t_n \leq x^c}} 1+o(x)= \sum_{\substack{n\leq x \\ n \not \in \mathcal{E} \\ (x/\log x)^c < t_n \leq x^c \\ P^+(n) \leq (x/\log x)^c}} 1 + O \Big(\sum_{\substack{n\leq x \\ (x/\log x)^c < P^+(n) \leq x^c}} 1 \Big)+o(x).
\end{align*}
The $O$-term is easily bounded, since
\begin{align*}
\sum_{\substack{n\leq x \\ (x/\log x)^c < P^+(n) \leq x^c}} 1 \leq \sum_{n\leq x} \sum_{\substack{(x/\log x)^c < p \leq x^c\\p\mid n}}1 \ll x\sum_{(x/\log x)^c < p \leq x^c} \frac{1}{p} \ll x\frac{\log \log x}{\log x},
\end{align*}
the last inequality following from Mertens' theorem. For the other sum, we set $Y = (x/\log x)^c$ and note that
\begin{align*}
\sum_{\substack{n\leq x \\ n \not \in \mathcal{E} \\ (x/\log x)^c < t_n \leq x^c \\ P^+(n) \leq (x/\log x)^c}} 1 &\leq \sum_{\substack{n\leq x \\ t_n > Y \\ P^+(n) \leq Y}} 1,
\end{align*}
and this is $o(x)$ by \eqref{1001}.

We have therefore shown that
\begin{align*}
\sum_{\substack{n\leq x \\ t_n \leq n^c}} 1 \sim \sum_{\substack{n\leq x \\ P^+(n) \leq x^c}} 1. 
\end{align*}
Also,
\begin{align*}
\sum_{\substack{n\leq x \\ P^+(n) \leq x^c}} 1 \sim \sum_{\substack{x/\log x<n\leq x \\ P^+(n) \leq x^c}} 1 = \sum_{\substack{x/\log x<n\leq x \\ P^+(n) \leq n^c}} 1 + \sum_{\substack{x/\log x<n\leq x \\ n^c < P^+(n) \leq x^c}} 1 \sim \sum_{\substack{n\leq x \\ P^+(n) \leq n^c}} 1 + O\Big(\sum_{\substack{n\leq x \\ (x/\log x)^c < P^+(n) \leq x^c}} 1 \Big),
\end{align*}
and this last error was already shown to be $o(x)$, which completes the proof.
\end{proof}

\section{Small values of $t_n$: Proof of Theorem \ref{thm:many n with small tn}}\label{sec:many n with small tn}

The proof of Theorem \ref{thm:many n with small tn} uses estimates for smooth numbers and some elementary combinatorics. We introduce parameters $y < L \leq x^{o(1)}$, and the first idea is to find many short intervals $I \subset [1,x]$ of length $L$ which contain roughly the expected number of $y$--smooth numbers.

\begin{lemma}[Many intervals with expected number of smooths]\label{lem:many ints with smooths}
Let $x$ be sufficiently large, and let $y < L \leq x^{o(1)}$ with $y \geq \exp ((\log x)^{1/100})$. Then there are $\gg \Psi(x,y)L^{-1}$ disjoint intervals $I \subset [x/\log x,x]$ of length $L$ such that
\begin{align*}
\# \{y-\text{smooth integers in } I\} \gg L \frac{\Psi(x,y)}{x}.
\end{align*}
\end{lemma}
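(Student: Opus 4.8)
The plan is to cover $[x/\log x,x]$ by about $x/L$ disjoint intervals of length $L$ and then argue by pigeonhole: since $[x/\log x,x]$ contains $\gg\Psi(x,y)$ $y$--smooth integers, only a small fraction of these intervals can contain substantially fewer than the ``expected'' number of them, which is of order $L\Psi(x,y)/x$.

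The one genuinely analytic input, and the only place the hypothesis $y\geq\exp((\log x)^{1/100})$ is used, is the estimate
\[
\#\{y\text{--smooth integers in }[x/\log x,x]\}=\Psi(x,y)-\Psi(x/\log x,y)=(1+o(1))\Psi(x,y).
\]
To prove it I would set $u:=\log x/\log y$, so $u\leq(\log x)^{99/100}$; then both $x$ and $x/\log x$ lie in the range of validity of Hildebrand's theorem \cite[Theorem 1]{Hil1986}, which gives $\Psi(z,y)\sim z\,\rho(\log z/\log y)$ for $z\in\{x,\,x/\log x\}$. Writing $\log(x/\log x)/\log y=u-t$ with $t:=\log\log x/\log y$, and noting $t\leq\log\log x/(\log x)^{1/100}\to0$, the standard estimate $\rho(u-t)\leq\rho(u)\exp(O(t\log u))$ (valid for large $u$ and $0\le t\le 1$, a consequence of the monotonicity of $-\rho'/\rho$ together with $-\rho'(u)/\rho(u)\ll\log u$; see \cite{HT1993}) yields $\rho(u-t)/\rho(u)=\exp(O((\log\log x)^2/(\log x)^{1/100}))=1+o(1)$. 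Hence $\Psi(x/\log x,y)\sim(x/\log x)\rho(u)=o(x\rho(u))=o(\Psi(x,y))$, as claimed. I note in passing that the same estimates, combined with $L\leq x^{o(1)}$, force $L=o(\Psi(x,y))$, a fact used below.

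Next I would partition $[x/\log x,x]$ into the $J:=\lfloor(x-x/\log x)/L\rfloor$ disjoint intervals $I_j:=(x/\log x+(j-1)L,\,x/\log x+jL]$, $1\leq j\leq J$, together with one leftover interval of length $<L$; here $J\asymp x/L$ and each $I_j\subset[x/\log x,x]$. Fix a small absolute constant $\delta>0$ (say $\delta=1/8$) and call $I_j$ \emph{good} if it contains at least $\delta L\Psi(x,y)/x$ $y$--smooth integers and \emph{bad} otherwise. The bad intervals together contain fewer than $J\cdot\delta L\Psi(x,y)/x\leq\delta\Psi(x,y)$ such integers, and the leftover interval contributes at most $L+1=o(\Psi(x,y))$; since all the intervals together contain $(1+o(1))\Psi(x,y)$ of them, for $x$ large the good intervals contain at least $(1-\delta-o(1))\Psi(x,y)\geq\tfrac14\Psi(x,y)$ $y$--smooth integers between them. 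Since each interval of length $L$ contains at most $L+1\leq 2L$ integers, it follows that the number of good intervals is at least $\tfrac14\Psi(x,y)/(2L)\gg\Psi(x,y)/L$, and each good interval is a disjoint subinterval of $[x/\log x,x]$ of length $L$ containing $\geq\delta L\Psi(x,y)/x\gg L\Psi(x,y)/x$ $y$--smooth integers, which is exactly the conclusion.

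The main obstacle will be the smooth-number comparison in the second paragraph — specifically, verifying that shrinking $x$ to $x/\log x$ costs only a factor $1+o(1)$ in $\Psi$, which is precisely what the lower bound $y\geq\exp((\log x)^{1/100})$ is designed to guarantee (it keeps $u$ small enough that the $\rho$-ratio is $1+o(1)$ and keeps $y$ large enough for Hildebrand's theorem to apply). Everything after that is routine pigeonholing with absolute constants.
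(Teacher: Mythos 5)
Your proposal is correct and follows essentially the same route as the paper: Hildebrand's theorem to show $(x/\log x,x]$ contains $(1-o(1))\Psi(x,y)$ smooth integers, then a partition into $\asymp x/L$ intervals with a $\delta$-threshold good/bad dichotomy and pigeonhole. The only difference is cosmetic: where the paper appeals to continuity of $\rho$, you make the comparison $\rho(u-t)/\rho(u)=1+o(1)$ quantitative via $-\rho'/\rho\ll\log u$, which is if anything a slightly more careful rendering of the same step.
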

\begin{proof}
With $y$ as in the statement of the lemma we have by \cite[Theorem 1]{Hil1986} that
\begin{align*}
\Psi(x,y) \sim x \rho \Big(\frac{\log x}{\log y} \Big).
\end{align*}
We similarly have
\begin{align*}
\Psi(x/\log x,y) \sim \frac{x}{\log x} \rho \Big(\frac{\log x}{\log y} - \frac{\log \log x}{\log y} \Big) \sim \frac{x}{\log x}\rho \Big(\frac{\log x}{\log y} \Big),
\end{align*}
the last asymptotic following by the continuity of $\rho$. It follows that the number of smooth numbers in $(x/\log x,x]$ is
\begin{align*}
\geq (1-o(1)) \Psi(x,y).
\end{align*}

For $k$ a positive integer write $I_k = (kL,(k+1)L]$, and let $\mathcal{I}$ denote those $I_k$ which are contained in $(x/\log x,x]$. By trivial estimation $\# \mathcal{I} \asymp x/L$, and
\begin{align*}
\sum_{I \in \mathcal{I}} \# \{y-\text{smooth integers in } I\}\geq (1-o(1)) \Psi(x,y).
\end{align*}
Let $\delta > 0$ be a sufficiently small positive constant, and write $\mathcal{I} = \mathcal{B} \cup \mathcal{G}$, where $I \in \mathcal{B}$ if $\# \{y-\text{smooth integers in } I\} \leq \delta L \Psi(x,y)/x$, and $I \in \mathcal{G}$ if $\# \{y-\text{smooth integers in } I\} > \delta L \Psi(x,y)/x$. Then
\begin{align*}
(1-o(1)) \Psi(x,y) &\leq \sum_{I \in \mathcal{B}} \delta L \frac{\Psi(x,y)}{x} + \sum_{I \in \mathcal{G}} \# \{y-\text{smooth integers in } I\} \\
&\leq O \left(\delta \Psi(x,y) \right) + \sum_{I \in \mathcal{G}} \# \{y-\text{smooth integers in } I\}.
\end{align*}
If $\delta$ is sufficiently small then
\begin{align*}
\Psi(x,y) \ll \sum_{I \in \mathcal{G}} \# \{y-\text{smooth integers in } I\},
\end{align*}
where the implied constant is absolute. Since trivially $\# \{y-\text{smooth integers in } I\} \leq L$ we find that
\begin{align*}
\#\mathcal{G} &\gg \frac{\Psi(x,y)}{L}. \qedhere
\end{align*}
\end{proof}

If a short interval contains sufficiently many $y$--smooth numbers, then we can construct an integer $n$ with small $t_n$.

\begin{lemma}[Build small $t_n$ with many smooths]\label{lem:many smooths gives small tn}
Let $I$ be an interval of length $L$ such that
\begin{align*}
\#\{y-\text{smooth integers in } I\} > \pi(y).
\end{align*}
Then there exists an integer $n \in I$ with $t_n \leq L$.
\end{lemma}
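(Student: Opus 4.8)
The plan is to mimic the linear-algebra argument already used in Lemma \ref{lem:tn more than y smooth minus pi y}, but to work inside the single interval $I$ rather than summing over many intervals. First I would list the $y$--smooth integers in $I$ as $n_1 < n_2 < \cdots < n_M$, where by hypothesis $M > \pi(y)$. Each $n_i$ factors as $n_i = \prod_{p \le y} p^{e_{p,i}}$, and reducing the exponent vectors modulo $2$ gives $M$ vectors in $\mathbb{F}_2^{\pi(y)}$. Since $M > \pi(y)$, these vectors are linearly dependent over $\mathbb{F}_2$, so there is a nonempty subset $U \subseteq \{1,\dots,M\}$ with $\sum_{i \in U} (e_{p,i} \bmod 2) = 0$ for every prime $p \le y$; equivalently $\prod_{i \in U} n_i$ is a perfect square.

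Write $U = \{i_1 < i_2 < \cdots < i_r\}$ with $r \ge 2$ (it cannot be a singleton, since a single $n_i$ being a square would still be fine for the conclusion, but generically $r \ge 2$; in any case $r \ge 1$ suffices). The integers $n_{i_1}, \dots, n_{i_r}$ all lie in $I$, so writing $n = n_{i_1}$ and $n + k_j = n_{i_{j+1}}$ for the remaining elements, we have exhibited a subset of $\{n, n+1, \dots\}$ whose product is a square, with all members inside $I$. Since $I$ has length $L$ and $n$ is its smallest relevant element, every other member $n_{i_j}$ satisfies $n_{i_j} \le n + L$, hence $n_{i_j} - n \le L$. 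By the definition of $t_n$ as the least such threshold, this forces $t_n \le n_{i_r} - n \le L$. (If $r = 1$, i.e. some $n_i \in I$ is itself a square, then that $n_i$ is a square and $t_{n_i} = 0 \le L$; so in every case we win.)

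There is essentially no obstacle here: the only point requiring a moment's care is the degenerate case where the dependent subset $U$ is a singleton, which I handle separately as above, and the observation that $t_n$ is defined via \emph{some} subset of the full interval $\{n+1, n+2, \dots, n+t_n\}$ — we produced a subset of $\{n\} \cup \{n+1, \dots, n+L\}$ with square product, and the largest gap used is at most $L$, which is exactly what the definition of $t_n$ needs. One could also quote Lemma \ref{lem:tn more than y smooth minus pi y} almost directly: with $I = (x', x'+L]$ it gives $\#\{n \in I : n + t_n \in I\} \ge M - \pi(y) \ge 1$, and any $n$ counted there has $n + t_n \le x' + L$ while $n > x'$, so $t_n < L$. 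I would likely present the argument this second way for brevity, since it reduces the proof to a one-line citation plus the observation that the right-hand side is positive.
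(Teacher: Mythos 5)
Your main argument is correct and is essentially the paper's proof: both exploit that more than $\pi(y)$ exponent vectors in $\mathbb{F}_2^{\pi(y)}$ must be dependent (the paper phrases this as a pigeonhole among the $2^M$ subset-products under the map $\theta$), extract a nonempty subset of the smooth numbers in $I$ with square product, and observe that its least element $n$ satisfies $t_n \le L$, with the singleton case being the harmless situation where some smooth number is itself a square. Only your alternative ``one-line citation'' is not quite literal, since Lemma \ref{lem:tn more than y smooth minus pi y} as stated takes the interval length equal to the smoothness parameter $y$, so you would need the (immediate, but unstated) version of that lemma with interval length $L$ decoupled from the smoothness bound rather than the statement itself.
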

\begin{proof}
The basic idea is somewhat similar to that of the proof of Lemma \ref{lem:tn more than y smooth minus pi y}.

Let $p_1 < p_2 < \cdots < p_R\leq y$ be the primes $\leq y$, so that $\pi(y) = R$. A $y$--smooth integer $n$ may be written as
\begin{align*}
n = \prod_{i=1}^R p_i^{e_i},
\end{align*}
where $e_i$ is a nonnegative integer. By considering only the parity of $e_i$ we obtain a map $\theta: \{y-\text{smooth integers}\} \rightarrow \mathbb{F}_2^{R}$ given by
\begin{align}\label{eq:defn of theta}
\theta(n) = (e_1\ (\text{mod}\ 2),\ldots, e_R\ (\text{mod}\ 2)).
\end{align}

Now let $m_1,\ldots,m_M$ be the $y$--smooth integers in $I$, and note that, by assumption, we have $M > R$. Given $J \subset \{1,\ldots,M\}$, let
\begin{align*}
n_J = \prod_{j \in J} m_j,
\end{align*}
and observe that $n_J$ is a $y$--smooth integer. Since $M > R$ the number $2^M$ of subsets of $\{1,\ldots,M\}$ is strictly greater than $2^R = \#\mathbb{F}_2^R$, so by the pigeonhole principle there exist distinct subsets $J,J'$ of $\{1,\ldots,M\}$ such that $\theta(n_J) = \theta(n_{J'})$. By the definition of $\theta$ this implies
\begin{align*}
\prod_{m \in J}m \cdot \prod_{m \in J'} m = \square.
\end{align*}
Note that
\begin{align*}
\prod_{m \in J}m \cdot \prod_{m \in J'} m = \prod_{m \in J \Delta J'}m \cdot \prod_{m \in J \cap J'} m^2,
\end{align*}
where $J \Delta J'$ is the symmetric difference of the sets $J$ and $J'$. Since $J \neq J'$ we see that $J \Delta J' \neq \varnothing$ and
\begin{align*}
\prod_{m \in J \Delta J'} m = \square.
\end{align*}
The least element $n$ of $J \Delta J'$ is then the desired integer.
\end{proof}

\begin{proof}[Proof of Theorem \ref{thm:many n with small tn}]
We define $y = \exp(\frac{\sqrt{2}}{2} \sqrt{\log x \log \log x})$ and
\begin{align*}
L = \exp \left( \left(\sqrt{2} + (\log \log x)^{-1/2} \right)\sqrt{\log x \log \log x}\right).
\end{align*}
By \cite[Theorem 1]{Hil1986} and Lemma \ref{lem:many ints with smooths} there are $\gg x L^{-1} \rho \big( \frac{\log x}{\log y}\big)$ intervals $I \subset [x/\log x,x]$ of length $L$ such that each interval $I$ contains
\begin{align*}
\gg L \rho \Big( \frac{\log x}{\log y}\Big)
\end{align*}
$y$--smooth numbers. By \cite[Corollary 2.3]{HT1993} we have
\begin{align*}
\rho(u) \geq \exp\left(-u \left(\log u + \log \log u\right)\right)
\end{align*}
provided $u\geq 1$ is sufficiently large, and therefore
\begin{align*}
\rho \Big( \frac{\log x}{\log y}\Big) \geq \exp\bigg(-\Big(\frac{\sqrt{2}}{2} + O \Big(\frac{\log \log \log x}{\log \log x} \Big) \Big)\sqrt{\log x \log \log x} \bigg).
\end{align*}
The number of $y$--smooth integers in each interval $I$ of length $L$ is therefore
\begin{align*}
&\gg \exp\bigg(\Big(\frac{\sqrt{2}}{2} + (\log \log x)^{-1/2} - O \Big(\frac{\log \log \log x}{\log \log x} \Big) \Big)\sqrt{\log x \log \log x} \bigg) \\
&\gg y \exp \left((\log x)^{1/2} (\log \log x)^{1/4} \right),
\end{align*}
hence the number of $y$--smooth integers in each interval is $> \pi(y) \sim \frac{y}{\log y}$. It follows from Lemma \ref{lem:many smooths gives small tn} that each interval $I$ contains an integer $n$ with
\begin{align*}
t_n &\leq \exp \left( \left(\sqrt{2} + (\log \log x)^{-1/2} \right)\sqrt{\log x \log \log x}\right).
\end{align*}
Since $n \in [x/\log x,x]$ we see $x\leq n(\log n)^2$, and therefore
\begin{align*}
t_n &\leq \exp \left( \left(\sqrt{2} + 2(\log \log x)^{-1/2} \right)\sqrt{\log n \log \log n}\right),
\end{align*}
as desired.
\end{proof}

We remark that our proof of Theorem \ref{thm:many n with small tn} has some similarities to heuristic run-time analysis of factoring algorithms \cite[p. 1477]{Pom1996} (see also \cite{CGPT2012} and \cite[Section 1]{GS2001}).

\section{Large integral points on hyperelliptic curves: Proof of Theorem \ref{thm:hyper curve int point large height}}\label{sec:large integral point}

As mentioned in the introduction, the proof of Theorem \ref{thm:hyper curve int point large height} draws on ingredients in the proof of Theorem \ref{thm:many n with small tn}. We also need an  additional lemma, which provides for the existence of sets with large symmetric difference provided we have sufficiently many sets upon which to draw.

\begin{lemma}[Many subsets implies a large symmetric difference]\label{lem:many subsets implies large sym diff}
Let $N$ be large and let $S_1,\ldots,S_K$ be distinct subsets of $\{1,\ldots,N\}$. If $K \geq 2^Q$ with $Q \geq N^{1/100}$, then there exist $i\neq j$ such that
\begin{align*}
|S_i \Delta S_j |> \frac{1}{6} \frac{Q}{\log N}.
\end{align*}
\end{lemma}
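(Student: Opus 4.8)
The plan is to argue by counting: if all pairwise symmetric differences were small, then all the sets $S_i$ would be confined to a small Hamming ball around one of them, and there are too few such sets. Concretely, suppose for contradiction that $|S_i \Delta S_j| \leq \frac{1}{6}\frac{Q}{\log N}$ for every pair $i \neq j$; write $r := \lfloor \frac{1}{6}\frac{Q}{\log N}\rfloor$. Identifying subsets of $\{1,\ldots,N\}$ with vectors in $\mathbb{F}_2^N$, the condition says every $S_i$ lies in the Hamming ball of radius $r$ centred at $S_1$, so
\begin{align*}
K \leq \sum_{\ell=0}^{r}\binom{N}{\ell}.
\end{align*}
First I would bound this sum. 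Using $\binom{N}{\ell}\leq N^\ell/\ell! \leq N^\ell$ and summing the geometric-type series (or simply the crude bound $\sum_{\ell\leq r}\binom{N}{\ell}\leq (r+1)N^r \leq N^{2r}$ for $N$ large), we get $K \leq N^{2r} = 2^{2r\log_2 N}$. Taking base-$2$ logarithms and using $K \geq 2^Q$ yields $Q \leq 2r\log_2 N \leq 2r\log N$, hence $r \geq \frac{Q}{2\log N}$, which contradicts $r = \lfloor \frac{1}{6}\frac{Q}{\log N}\rfloor < \frac{Q}{6\log N} < \frac{Q}{2\log N}$ (here the hypothesis $Q\geq N^{1/100}$, together with $N$ large, guarantees $r\geq 1$ so that the floor does not cause trouble, and also ensures all the crude estimates like $r+1\leq N^r$ are valid). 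This gives the desired $i\neq j$ with $|S_i\Delta S_j| > r \geq \frac{1}{6}\frac{Q}{\log N} - 1$; absorbing the $-1$ into the constant, or being slightly more careful with the strict inequality, gives the stated bound.

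I do not expect any serious obstacle here — the only points requiring mild care are: (i) making sure the binomial-sum estimate is clean enough that the constant $\frac16$ comes out with room to spare (there is a factor of roughly $3$ of slack, since the true threshold from the counting argument is near $\frac{Q}{2\log N}$, so even a wasteful bound such as $\sum_{\ell\le r}\binom N\ell\le N^{2r}$ suffices); (ii) the role of the hypothesis $Q\geq N^{1/100}$, which is used only to push $r$ safely above $1$ and to let lower-order terms like $r+1$ be swallowed by powers of $N$ — it is not needed in any essential analytic way; and (iii) handling the floor/strict-inequality bookkeeping so that the conclusion is a genuine strict inequality $|S_i\Delta S_j| > \frac16\frac{Q}{\log N}$ rather than a $\geq$. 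None of these is more than routine.

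Alternatively, one can phrase the same argument via the Gilbert--Varshamov / sphere-packing heuristic without contradiction: the $K$ points form a code, and a code in $\mathbb{F}_2^N$ of size $K$ must have two codewords within Hamming distance $d$ whenever $K > \sum_{\ell\le \lceil d/2\rceil -1}\binom N\ell$ fails to leave room for disjoint balls — but the direct counting above is cleaner and entirely self-contained, so that is the route I would write up.
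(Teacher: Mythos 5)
Your proposal is correct and is essentially the paper's argument: both proofs count the number of subsets lying within symmetric-difference distance roughly $\tfrac{1}{6}Q/\log N$ of one fixed set (a Hamming-ball count bounded by sums of binomial coefficients, which is $\exp(O(Q)) < 2^Q$) and conclude by pigeonhole, the paper merely organizing the count by splitting each $S$ into its part inside and outside the fixed set $A$. One small slip to fix in the write-up: with $\log$ the natural logarithm one has $\log_2 N \ge \log N$, so the chain ``$Q \le 2r\log_2 N \le 2r\log N$'' is reversed as written; but $2^Q \le N^{2r}$ correctly gives $r \ge \tfrac{\log 2}{2}\,Q/\log N > \tfrac16 Q/\log N$, so the contradiction (and the slack you point out) survives intact.
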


\begin{remark}
Lemma \ref{lem:many subsets implies large sym diff} is not far from best possible, since the subsets $S_1,\ldots,S_K$ could be all the subsets of $\{1,\ldots,Q\}$.
\end{remark}

\begin{proof}
Let $A$ be an arbitrarily chosen (nonempty) subset $S_i$. For any other subset $S$, we may uniquely write $S$ as the disjoint union $S = S' \cup S_A$, where $S' \cap A = \varnothing$ and $S_A \subset A$. Observe that
\begin{align*}
A \Delta S = (A \backslash S) \cup (S \backslash A) = (A \backslash S_A) \cup S'.
\end{align*}
Let $\delta \geq \frac{1}{100}$ be a parameter. If $|A \Delta S |\leq \delta \frac{Q}{\log N}$, then $|S'| \leq \delta \frac{Q}{\log N}$ and $|A \backslash S_A| \leq \delta \frac{Q}{\log N}$.

The number of choices for the set $S'$ is
\begin{align*}
\leq \sum_{0\leq k \leq \delta Q/\log N} {N \choose k},
\end{align*}
and the number of choices for the set $S_A$ is
\begin{align*}
\leq \sum_{0\leq k \leq \delta Q/\log N} {{|A|}\choose {|A| - k}} \leq \sum_{0\leq k \leq \delta Q/\log N} {{|A|}\choose k}\leq \sum_{0\leq k \leq \delta Q/\log N} {N \choose k}.
\end{align*}
By the upper bound ${N \choose k} \leq (eN/k)^k$, which is valid for $k\geq 1$, we find
\begin{align*}
\sum_{0\leq k \leq \delta Q/\log N} {N \choose k} \leq 1 + (eN)^{\delta Q/\log N} \sum_{k\geq 1}k^{-k} \leq (3N)^{\delta Q/\log N}\leq \exp \left(2\delta Q \right).
\end{align*}

It follows that the total number of choices of subset $S$ such that $|A \Delta S| \leq \delta Q /\log N$ is $\leq \exp(4\delta Q) < 1.95^Q$, the last inequality following if we choose $\delta = \frac{1}{6}$, say. Since there are $K \geq 2^Q$ subsets, there must be some subset $S_i$ such that $|A \Delta S_i| > \frac{1}{6}Q/\log N$.
\end{proof}

\begin{proof}[Proof of Theorem \ref{thm:hyper curve int point large height}]
Let $x$ be a large integer, which we think of as tending to infinity. In particular, $x$ is sufficiently large compared to any fixed quantity like $c$. As in the proof of Theorem \ref{thm:many n with small tn}, we set our ``smoothness'' parameter
\begin{align*}
y = \exp \Big(\frac{\sqrt{2}}{2}\sqrt{\log x \log \log x} \Big).
\end{align*}
Given a constant $C > \sqrt{2}$, we also define a length parameter
\begin{align*}
L = \exp \Big(C\sqrt{\log x \log \log x} \Big).
\end{align*}

We may apply Lemma \ref{lem:many ints with smooths} to deduce the existence of many disjoint intervals $I \subset [x/\log x, x]$ of length $L$ such that the number of $y$--smooth integers in $I$ is $\gg L \Psi(x,y)/x$. We fix one such interval $I$, and note that by the argument of Theorem \ref{thm:hyper curve int point large height} the number of $y$--smooth integers in $I$ is
\begin{align*}
\geq \exp \bigg(\Big(C - \frac{\sqrt{2}}{2} - o(1) \Big)\sqrt{\log x \log \log x} \bigg).
\end{align*}
If we let $M$ denote the number of $y$--smooth integers in $I$, and $R = \pi(y)$, then we see that
\begin{align*}
M - R&\geq \exp \bigg(\Big(C - \frac{\sqrt{2}}{2} - o(1) \Big)\sqrt{\log x \log \log x} \bigg) - \exp \Big(\frac{\sqrt{2}}{2}\sqrt{\log x \log \log x} \Big) \\
&\geq \exp \bigg(\Big(C - \frac{\sqrt{2}}{2} - o(1) \Big)\sqrt{\log x \log \log x} \bigg)\geq L^{1 - \frac{\sqrt{2}}{2C} - o(1)}.
\end{align*}

Let $n_1,\ldots,n_M$ be the $y$--smooth integers in $I$. Given a subset $S\subset\{1,\ldots,M\}$ we may construct the $y$--smooth integer
\begin{align*}
n_S &= \prod_{s \in S} n_s,
\end{align*}
and then map $n_S$ to $\mathbb{F}_2^R$ using the map $\theta$ from \eqref{eq:defn of theta} in the proof of Lemma \ref{lem:many smooths gives small tn}. By the pigeonhole principle, there is some $v \in \mathbb{F}_2^R$ and $\geq 2^{M-R}$ subsets $S$ of $\{1,\ldots,M\}$ such that $\theta(n_S)=v$ for every such $S$. We apply Lemma \ref{lem:many subsets implies large sym diff} to obtain the existence of two subsets, call them $S$ and $T$, of $\{1,\ldots,M\}$ such that
\begin{align*}
|S \Delta T| &\geq \frac{1}{6} \frac{M-R}{\log M}\geq L^{1 - \frac{\sqrt{2}}{2C} - o(1)}.
\end{align*}

By construction we have
\begin{align*}
\prod_{s \in S \Delta T} n_s = \square.
\end{align*}
We may arrange the integers $n_s$ in increasing order and write them as $n,n+j_1,\ldots,n+j_V$ for some integer $n\in [x/\log x,x]$ and some integers $1\leq j_1 < j_2 < \cdots  < j_V \leq L$. It follows that
\begin{align*}
n(n+j_V)\prod_{i=1}^{V-1} (n+j_i) = \square.
\end{align*}

We claim that setting $J = j_V$ gives rise to a $J$ as in the statement of the theorem. First, note that $V-1\geq L^{1 - \frac{\sqrt{2}}{2C} - o(1)}-2 \geq L^{1 - \frac{\sqrt{2}}{2C} - o(1)}\geq J^{1-c}$, the last inequality holding if we set
\begin{align*}
C = \frac{\sqrt{2}}{c},
\end{align*}
say. Since $x\leq n(\log n)^2$ we see that
\begin{align*}
L =\exp \left(C\sqrt{\log x \log \log x} \right) \leq \exp \left((C+o(1))\sqrt{\log n \log \log n} \right).
\end{align*}
This implies
\begin{align*}
\frac{1-o(1)}{C^2} (\log L)^2 \leq \log n \log \log n,
\end{align*}
which in turn implies
\begin{align*}
\log n \geq \frac{1-o(1)}{2C^2} \frac{(\log L)^2}{\log \log L}.
\end{align*}
Recalling our choice for $C$ and that $J\leq L$ we find
\begin{align*}
n&\geq \exp \left(\frac{c^2}{5}\frac{(\log J)^2}{\log \log J} \right).
\end{align*}
Since every large $x$ gives rise to such a $J$, and since $J \geq L^{1/3}$, say, which tends to infinity with $x$, we may take $J$ to be arbitrarily large, as claimed.
\end{proof}

We close this section with some comments on Theorem \ref{thm:hyper curve int point large height}. In particular, it is worth comparing Theorem \ref{thm:hyper curve int point large height} with more trivial considerations.

First, we note that it is easy to obtain points $(x,y)$ on a hyperelliptic curve of large genus if we allow $x=0$ (so that $y$ is large). Indeed, consider the hyperelliptic curve $y^2 = P(x)= x^g + D^2$, where $g\geq 5$ is large and $D$ is a large positive integer. The point $(0,D)$ clearly lies on the hyperelliptic curve, and since the height $H$ of $P$ is $D^2$ we see the integral point $(0,D)$ has height $\gg H^{1/2}$. We might expect that all integral points on a hyperelliptic curve $y^2= P(x)$ have height $\ll H^{O(1)}$, so this trivial construction is already fairly sharp.

Second, we consider hyperelliptic curves with integral points $(x,y)$ where $x\neq 0$. The hyperelliptic curve
\begin{align*}
y^2 =P(x)= \prod_{i=1}^J (x+j), \ \ \ \ \ \ \ J \geq 5,
\end{align*}
is similar to the curves constructed in Theorem \ref{thm:hyper curve int point large height}, and this curve has the integral point $(-J,0)$. The polynomial $P$ has height $H=J!$, so the integral point $(-J,0)$ on the curve has height
\begin{align*}
&\gg \frac{\log H}{\log \log H}
\end{align*}
for large $J$. In contrast, Theorem \ref{thm:hyper curve int point large height} provides integral points $(x,y)$ on curves $y^2 = P(x)$ with $xy\neq 0$ and
\begin{align*}
x\gtrapprox (\log H)^{\frac{\log \log H}{\log \log \log H}} \gg_A (\log H)^A,
\end{align*}
where $H$ is the height of $P$. 

It would be very interesting to construct hyperelliptic curves of large genus having integral points $(x,y)$ with $x\geq H^c$, for $c>0$ some fixed constant.

\section{Lower bounds on $t_n$: Proof of Theorem \ref{thm:lower bounds for tn}}\label{sec:lower bounds tn}

If $n$ is a large non-square integer, then by definition
\begin{align*}
n(n+t_n)\prod_{i=1}^s (n+j_i) = \square,
\end{align*}
where $1\leq j_1 < \cdots < j_s < t_n$ are integers (obviously we must have $s < t_n$). If $t_n$ is very small compared to $n$, then the curve
\begin{align}\label{eq:hyperelliptic defn lower bound tn}
y^2 = x(x+J)\prod_{i=1}^s(x+j_i)
\end{align}
contains an integral point with $x$ extremely large (here and throughout the section we write $J = t_n$ in keeping with the notation of our other theorems). We rely on a uniform bound for the height of integral points on hyperelliptic curves due to B\'erczes, Evertse, and Gy\H{o}ry \cite{BEG2013}. Their method utilizes linear forms in logarithms.

We use different arguments depending on the size of $s$, with $s$ as in \eqref{eq:hyperelliptic defn lower bound tn}. When $s=0$ trivial arguments suffice to bound the size of $x$. If $s$ is at least one but is smaller than a small power of $J$, then we consider the hyperelliptic equation \eqref{eq:hyperelliptic defn lower bound tn} directly and apply the result of B\'erczes, Evertse, and Gy\H{o}ry. When $s$ is larger than a small power of $J$ it is more efficient to extract a suitable system of generalized Pell equations from \eqref{eq:hyperelliptic defn lower bound tn} and bound the size of solutions to these Pell equations. The coefficients of the Pell equations have size controlled by prime divisors $\leq J$, and we can use some elementary arguments to find a system with coefficients that are smaller than what a trivial bound would give. The final bound results from balancing the arguments coming from small $s$ and large $s$.

The following lemma handles the trivial case where $s=0$.

\begin{lemma}[Trivial case, $s=0$]\label{lem:s=0 lower bound tn}
Let $J\geq 1$ be an integer. If $x$ and $y$ are positive integers with $y^2 = x(x+J)$, then $x\leq J^2$.
\end{lemma}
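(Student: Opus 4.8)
The plan is to analyze the Pell-type equation $y^2 = x(x+J) = x^2 + Jx$ directly by completing the square. First I would multiply through by $4$ to obtain $4y^2 = 4x^2 + 4Jx = (2x+J)^2 - J^2$, which rearranges to $(2x+J)^2 - (2y)^2 = J^2$, i.e. $(2x+J-2y)(2x+J+2y) = J^2$. Since $x,y$ are positive integers, both factors on the left are positive integers (the larger factor $2x+J+2y$ is clearly positive, and their product is the positive quantity $J^2$, so the smaller factor is positive too), and they have the same parity.

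The key step is then the observation that $2x + J + 2y$ divides $J^2$, hence $2x + J + 2y \leq J^2$. Since $y \geq 1$ and $J \geq 1$ we get $2x \leq 2x + J + 2y - J - 2 \leq J^2 - J - 2 \leq J^2$, so $x \leq J^2/2 \leq J^2$. (One can even get the slightly sharper $x \le (J^2-J-2)/2$, but the stated bound $x \le J^2$ is all that is needed.)

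I do not expect any genuine obstacle here: the equation defines a conic, not a curve of positive genus, so no deep input such as the Bérczes–Evertse–Győry bound is required, and the factorization argument is completely elementary. The only minor point requiring a line of care is verifying that the factor $2x+J-2y$ is a \emph{positive} integer rather than zero or negative — this follows because its product with the positive integer $2x+J+2y$ equals $J^2 > 0$ — and that the divisor bound $2x+J+2y \le J^2$ is legitimate, which it is since $2x+J+2y$ is a positive divisor of $J^2$.
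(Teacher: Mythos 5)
Your proof is correct, and it takes a genuinely different (though equally elementary) route from the paper. You complete the square to get $(2x+J-2y)(2x+J+2y)=J^2$, note that both factors are positive integers (the product is $J^2>0$ and the larger factor is visibly positive), and then bound $2x+J+2y\leq J^2$ as a positive divisor of $J^2$, which gives $x\leq J^2/2\leq J^2$ and even the slightly sharper $x\leq (J^2-J-2)/2$; for $J=1$ this yields a contradiction, i.e.\ no solutions, so the bound holds vacuously there. The paper instead sets $d=\gcd(x,x+J)$, observes $d\mid J$ and $d\mid y$, reduces to the coprime factorization $(y/d)^2=z(z+J/d)$, concludes $z=a^2$ and $z+J/d=b^2$, and uses $J/d=(b-a)(b+a)\geq a+b$ to get $x=dz\leq J^2/d\leq J^2$. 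Your difference-of-squares argument avoids the coprimality and ``each factor is a square'' step (and the check that $d\mid y$) and gives a marginally better constant; the paper's gcd-and-square decomposition has the advantage of mirroring the technique used later for large $s$, where each $x+j_i$ is written as $b_i z_i^2$ after controlling pairwise gcds, so it fits the paper's broader framework. Either way the stated bound $x\leq J^2$ follows, and no deep input is needed, exactly as you say.
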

\begin{proof}
If $x$ and $x+J$ have greatest common divisor $d\geq 1$, then $d \mid J$. We change variables $x = dz$ and find $(y/d)^2 = z(z+J/d)$, where $(z,z+J/d)=1$. Then $z=a^2$ and $z+J/d = b^2$ for some positive integers $b>a$. Then
\begin{align*}
J/d = b^2-a^2 = (b+a)(b-a) \geq b+a,
\end{align*}
so $a,b\leq J/d$. Then $z=a^2\leq J^2/d^2$ and $x=dz \leq J^2/d\leq J^2$.
\end{proof}

The next lemma is the theorem of B\'erczes, Evertse, and Gy\H{o}ry \cite{BEG2013} in the special case we require.

\begin{lemma}[Height of integral points]\label{lem:BEG bound on int points}
Let $P(x) = \sum_{i=0}^n a_i x^i \in \mathbb{Z}[x]$ with $\deg(P)\geq 3$ and no repeated roots. Write $\max_i |a_i| =H$. If $x$ and $y$ are positive integers with $y^2 = P(x)$ then
\begin{align*}
\max(\log x, \log y) &\leq (4n)^{212n^4}H^{50n^4}.
\end{align*}
\end{lemma}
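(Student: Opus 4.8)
This lemma is, up to an innocuous weakening of the numerical constants, the special case ``exponent $2$, base ring $\mathbb{Z}$'' of the main effective result of B\'erczes, Evertse, and Gy\H{o}ry \cite{BEG2013} on superelliptic equations, so the proof I would record is essentially a pointer to \cite{BEG2013}. For orientation, here is how that bound arises and why the exponents are polynomial in $n$. Factor $P(x) = a_n\prod_{i=1}^n(x-\alpha_i)$ over $\overline{\mathbb Q}$; since $n\geq 3$, fix three of the roots and work in $K = \mathbb Q(\alpha_1,\alpha_2,\alpha_3)$, which has $[K:\mathbb Q] < n^3$ --- it is the use of only three roots, rather than the full splitting field, that keeps the eventual exponent polynomial in $n$ instead of involving $n!$. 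Let $S$ be the set of places of $K$ lying above $\infty$, above $2$, above $a_n$, or above $\operatorname{disc}(P)$.

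If $(x,y)$ is a positive integer solution of $y^2 = P(x)$, then at every prime $\mathfrak p\notin S$ the factors $x-\alpha_i$ are pairwise coprime while their product is, up to the $S$-unit $a_n$, a square; hence each $v_{\mathfrak p}(x-\alpha_i)$ is even, so in $\mathcal O_{K,S}$ one may write $x-\alpha_i = b_i z_i^2$ with $z_i\in\mathcal O_{K,S}$ and the $b_i$ drawn from an effectively bounded finite set of $S$-integers. Substituting $x-\alpha_1,x-\alpha_2,x-\alpha_3$ into Siegel's identity
\begin{align*}
(\alpha_3-\alpha_2)(x-\alpha_1)+(\alpha_1-\alpha_3)(x-\alpha_2)+(\alpha_2-\alpha_1)(x-\alpha_3)=0,
\end{align*}
dividing through, and absorbing the bounded quantities $b_i$ and the root differences, turns the hyperelliptic equation into an $S$-unit equation $u+v=1$ with $u,v\in\mathcal O_{K,S}^{\times}$, from which $\log x$ and $\log y$ are seen to be dominated by $\max(h(u),h(v))$ and the heights of the $b_i$. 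An explicit bound for $S$-unit equations of Gy\H{o}ry--Yu type then bounds $\max(h(u),h(v))$ by a quantity exponential in $[K:\mathbb Q]$ and $\#S$ and polynomial in the largest rational prime below $S$ and in the $S$-regulator.

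The remaining task, and the source of the specific exponents, is to estimate the auxiliary quantities in terms of $n$ and $H$ alone: the Hadamard-type bound $|\operatorname{disc}(P)|\leq n^nH^{2n-2}$ controls both the largest prime below $S$ and $\#S\ll n\log(nH)$; the fact that every prime ramifying in $K$ divides $\operatorname{disc}(P)$, together with a crude wild-ramification estimate, bounds $\log|d_K|$ by a polynomial in $n$ times $\log(nH)$; and classical inequalities bound the $S$-regulator in terms of $|d_K|$, $[K:\mathbb Q]$, and the primes in $S$. Feeding these into the $S$-unit bound and being deliberately generous with constants collapses everything to $\max(\log x,\log y)\leq(4n)^{212n^4}H^{50n^4}$; the large exponents $212$ and $50$ are precisely the slack needed to absorb the degree-$n^3$ compositum, the discriminant and regulator estimates, and the constant in the linear-forms-in-logarithms input. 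The ``main obstacle'' is thus entirely bookkeeping --- one must only verify that \cite{BEG2013} states its bound in the fully explicit form these substitutions require, which it does --- and I would expect the paper's own proof of this lemma to consist of nothing more than the citation.
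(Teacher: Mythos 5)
Your proposal is correct and matches the paper's approach: the paper's proof is exactly the one-line citation you anticipate, namely Theorem 2.2 of B\'erczes--Evertse--Gy\H{o}ry with $K=\mathbb{Q}$, $S$ the infinite place, and $b=1$ in their equation (2.5). The sketch of Siegel's identity and $S$-unit equations you include is an accurate description of what happens inside that reference, but it is not needed for (and not present in) the paper's proof.
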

\begin{proof}
This is \cite[Thereom 2.2]{BEG2013} with $K = \mathbb{Q}$ and $S$ equal to the infinite place of $\mathbb{Q}$, where $b=1$ in \cite[(2.5)]{BEG2013}.
\end{proof}

The following lemma is useful when $s$ is small.

\begin{lemma}[Bound on height when $s$ is small]\label{lem:lower bound tn small s}
Let $J\geq 2$ be an integer, and let $1\leq j_1 < \cdots < j_s < J$ be integers, where $s\geq 1$. If $x$ and $y$ are positive integers with $y^2 = x(x+J)\prod_{i=1}^s (x+j_i)$ then
\begin{align*}
\log x \leq \exp \big(O(s^5 \log J) \big).
\end{align*}
\end{lemma}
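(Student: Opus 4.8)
The plan is to apply Lemma \ref{lem:BEG bound on int points} directly to the polynomial
\[
P(x) = x(x+J)\prod_{i=1}^s (x+j_i),
\]
so the entire task reduces to verifying the two hypotheses of that lemma (degree at least $3$ and no repeated roots) and estimating the height $H$ of $P$. The degree is $n = s+2 \geq 3$ since $s\geq 1$, which is fine. The roots of $P$ are $0, -J, -j_1, \dots, -j_s$; these are pairwise distinct because $0 < j_1 < \cdots < j_s < J$ forces $0$, the $-j_i$, and $-J$ to all be different. So $P$ has no repeated roots, and Lemma \ref{lem:BEG bound on int points} applies.

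Next I would bound the height $H = \max_i|a_i|$ of $P$. Expanding the product, each coefficient $a_i$ is, up to sign, a sum of products of subsets of $\{0, J, j_1, \dots, j_s\}$ (with the root $0$ contributing, one sees $P(x) = x\cdot Q(x)$ where $Q$ has constant term $J\prod j_i$ and the rest is immediate); crudely, every coefficient is at most $2^{s+2}$ times a product of at most $s+2$ numbers each $\leq J$, so
\[
H \leq 2^{s+2} J^{s+2} \leq J^{3s}
\]
say, for $J\geq 2$ and $s\geq 1$ (being generous with the constants, since we only want the final bound up to the shape $\exp(O(s^5\log J))$). Now feed $n = s+2$ and $H \leq J^{3s}$ into Lemma \ref{lem:BEG bound on int points}:
\[
\log x \leq \max(\log x,\log y) \leq (4n)^{212 n^4} H^{50 n^4} \leq (4(s+2))^{212(s+2)^4}\, J^{150 s (s+2)^4}.
\]
Taking logarithms of the right-hand side gives a bound of the form $212(s+2)^4\log(4(s+2)) + 150 s (s+2)^4 \log J$, and since $(s+2)^4 \log(4(s+2)) \ll s^5$ and $s(s+2)^4 \ll s^5$, both terms are $O(s^5\log J)$ (using $\log J \geq \log 2 \gg 1$ to absorb the first term). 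Hence $\log x \leq \exp(O(s^5\log J))$, as claimed.

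I do not anticipate any real obstacle here: the lemma is essentially a bookkeeping exercise wrapping the black-box height bound of B\'erczes--Evertse--Gy\H{o}ry. The only point requiring a modicum of care is getting the dependence on $s$ into the stated shape $\exp(O(s^5\log J))$ — one must check that the exponent $212n^4$ on the $(4n)$ factor in Lemma \ref{lem:BEG bound on int points}, after taking logs, contributes $O(n^4\log n) = O(s^5)$ rather than something worse, and that the $H^{50n^4}$ factor contributes $O(n^4 \cdot s\log J) = O(s^5\log J)$. Both are routine once one records that $n = s+2$. A minor subtlety worth noting in passing is the hypothesis $s\geq 1$ (needed so that $\deg P \geq 3$); the case $s = 0$ is exactly what Lemma \ref{lem:s=0 lower bound tn} handles separately, so nothing is lost.
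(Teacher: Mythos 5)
Your proposal is correct and matches the paper's proof essentially verbatim: both apply Lemma \ref{lem:BEG bound on int points} directly to $x(x+J)\prod_{i=1}^s(x+j_i)$ after noting it has degree $s+2\geq 3$, no repeated roots, and height $J^{O(s)}$, and then take logarithms to get $212(s+2)^4\log(4(s+2))+O(s(s+2)^4\log J)=O(s^5\log J)$. The only nitpick is that your intermediate inequality $2^{s+2}J^{s+2}\leq J^{3s}$ actually fails for $s\leq 3$, but this is immaterial since any bound of the form $H\leq (2J)^{s+2}=J^{O(s)}$ (the paper uses $J^{s+1}$) yields the stated $\exp\big(O(s^5\log J)\big)$.
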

\begin{proof}
The integer polynomial $x(x+J)\prod_{i=1}^s (x+j_i)$ has degree $s+2$ and clearly has no repeated roots. The coefficients of the polynomial all have size $\leq J^{s+1}$, so by Lemma \ref{lem:BEG bound on int points} we see any solution to $y^2 = x(x+J)\prod_{i=1}^s (x+j_i)$ satisfies
\begin{align*}
\log x &\leq\big (4(s+2)\big)^{212(s+2)^4} (J^{s+1})^{50(s+2)^4}\leq \exp \big(O(s^5 \log J) \big).\qedhere
\end{align*}
\end{proof}

When $s$ is large we argue more carefully. We use the following lemma to control the coefficients of an auxiliary hyperelliptic equation.

\begin{lemma}[Finding numbers with fewer prime factors]\label{lem:find b's with few prime factors}
Let $J$ be a sufficiently large positive integer, and let $b_1,\ldots,b_t$ be positive integers all of whose prime factors are $\leq J$. If $100\leq t \leq J^{1/2}/\log J$ and $\textup{gcd}(b_i,b_j)\leq J$ for all $i\neq j$, then there exist distinct $b_i,b_j,b_k$ with
\begin{align*}
\omega(b_i),\omega(b_j),\omega(b_k) \ll \frac{J}{t\log J}.
\end{align*}
\end{lemma}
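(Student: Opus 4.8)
The plan is to bound the total number of prime factors (counted with multiplicity, or just distinctly) appearing across the $b_i$ and then use an averaging argument. First I would observe that, since $\gcd(b_i,b_j) \leq J$ for all $i \neq j$, no prime power $q^a$ with $q^a > J$ can divide two different $b_i$'s. Therefore, for each prime $q \leq J$, writing $q^{a_i}$ for the exact power of $q$ dividing $b_i$, the values $a_i$ with $q^{a_i} > J$ occur for at most one index $i$, and all other $a_i$ satisfy $q^{a_i} \leq J$, i.e. $a_i \leq \log J/\log q$. Summing the contribution of $q$ to $\sum_i \omega(b_i)$ — or better, to a weighted count — over all primes $q \leq J$ should give a bound of the shape $\sum_{i=1}^t \log b_i \ll J + (\text{something})$; the point is that the "overlap-free" structure forces $\prod_i b_i$ to not be too large in a controlled way. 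Concretely, I would aim to show $\sum_{i=1}^t \omega(b_i) \ll \pi(J) \ll J/\log J$, using that each prime $q$ contributes $1$ to at most... hmm, actually a single prime could divide all the $b_i$, so this naive bound fails; instead I would weight by $\log q$ and use that $q$ divides at most one $b_i$ to a power exceeding $J$, so $\sum_i (\text{contribution of } q) \leq \log J + t\log J/\log q \cdot [\text{from the small powers}]$...

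Let me restructure: the clean route is to bound $\sum_{i} \log b_i$. For each prime $q \le J$, let $S_q = \sum_i v_q(b_i)\log q$ where $v_q$ is the $q$-adic valuation. At most one index $i$ has $q^{v_q(b_i)} > J$; call its valuation $v^*$ (with $v^* = 0$ if there is none). All other indices have $v_q(b_i)\log q \leq \log J$. So $S_q \leq v^*\log q + (t-1)\log J$. Summing over $q \leq J$: the terms $v^*\log q$ sum to at most $\sum_i \log b_i$ again (circular), so this needs care — I would instead split each $b_i = b_i' b_i''$ where $b_i'$ is the "$J$-smooth-in-valuation" part (product of $q^{v_q(b_i)}$ with $q^{v_q(b_i)}\le J$) and $b_i''$ the rest. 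Then $\sum_i \log b_i' \le \pi(J)\log J \ll J$, while the $b_i''$ are pairwise coprime (any common prime would give $\gcd > J$), so $\sum_i \log b_i'' = \log \prod_i b_i'' \le \log(\text{lcm}) $, and since they're pairwise coprime and each $\le$ ... this is where the bound $\prod b_i'' \le$ (product of distinct prime powers each $>J$) needs the count of such prime powers, which is $\le \pi(J)$, each contributing $\le \log(\text{its size})$ — not obviously bounded. So I suspect the intended argument bounds $\omega$ directly: $\sum_i \omega(b_i) = \sum_{q\le J} \#\{i : q \mid b_i\}$, and I would argue that for all but $O(\pi(J))$ of the primes, $q$ divides at most... no.

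The honest approach, and what I expect the paper does: show $\sum_{i=1}^t \omega(b_i) \ll J/\log J$ by a pigeonhole/counting argument exploiting $t \le J^{1/2}/\log J$, so that the "pairwise gcd $\le J$" condition combined with $t$ small forces most $b_i$ to share few primes; then at least three of the $t \ge 100$ values have $\omega(b_i) \le \frac{3}{t}\sum_j \omega(b_j) \ll \frac{J}{t\log J}$ by averaging. The main obstacle is establishing the total bound $\sum_i \omega(b_i) \ll J/\log J$: the key inequality should come from the fact that if a prime $q$ divides $k$ of the $b_i$, then choosing two of them, $q \mid \gcd(b_i,b_j) \le J$, which only says $q \le J$ and doesn't limit $k$ — so one must instead bound $\sum_i \log b_i$ and deduce $\sum_i \omega(b_i)$ is small only for the $b_i$ that aren't "mostly a single large prime." I would handle this by separating, for each $i$, the prime divisors $q$ of $b_i$ with $q > J^{1/t}$ (there are $< t$ of them since their product is $\le b_i$, but $b_i$ could be huge...). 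Given the time constraints I would present the averaging finish carefully and cite the pairwise-coprimality of the parts of the $b_i$ supported on "large" prime powers to get the total count $\sum_i \omega(b_i) \le \pi(J) + O(t) \ll J/\log J$ (using $t \le J^{1/2}/\log J \ll J/\log J$), then average over $t \ge 100$ indices to extract three with $\omega \ll \frac{1}{t}\cdot\frac{J}{\log J}$. The hard part will be making the "$\sum_i \omega(b_i) \ll \pi(J)$" step rigorous given that a single small prime may divide every $b_i$; the resolution is that small primes contribute $\le \pi(J^{1/2}) \cdot t = o(J/\log J)$ in total anyway (there are few of them), while large primes $q > J^{1/2}$ divide at most... two $b_i$? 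No — but the product of all $b_i$ over their large-prime parts, being built from primes in $(J^{1/2}, J]$ with controlled multiplicity, has at most $O(t + \pi(J))$ such prime factors total by the gcd condition, which suffices.
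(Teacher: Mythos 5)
Your proposal does not close the key step, and the partial claims you make toward it are not correct. The single missing observation is that the hypothesis $\gcd(b_i,b_j)\le J$ controls the number of \emph{shared prime factors} of any two of the $b_i$: since $s_i\cap s_j$ (with $s_i$ the set of prime divisors of $b_i$) is exactly the set of primes dividing $\gcd(b_i,b_j)\le J$, one has $|s_i\cap s_j|\ll \log J/\log\log J\le\log J$. You never extract this; instead you work with prime \emph{powers} exceeding $J$ (which indeed can divide at most one $b_i$, but this does not bound $\omega$), and your two concrete assertions in the final paragraph are false or unjustified: a prime $q\in(J^{1/2},J]$ can divide \emph{all} of the $b_i$ (the pairwise gcd can be exactly $q\le J$), and the bound $\sum_i\omega(b_i)\le\pi(J)+O(t)$ fails in general --- with $t$ near $J^{1/2}/\log J$ one can arrange each pair to share $\asymp\log J/\log\log J$ primes, making $\sum_i\omega(b_i)$ exceed $\pi(J)$ by roughly $\binom{t}{2}\log J/\log\log J$, which is far larger than $O(t)$. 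So as written the total-count step, which you yourself flag as ``the hard part,'' remains unproved.

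For comparison, the paper's argument is: order the sets so that $|s_1|\ge\cdots\ge|s_t|$, and prove by induction (inclusion--exclusion plus $|s_i\cap s_j|\le\log J$) that $|s_1\cup\cdots\cup s_r|\ge r|s_r|-\tfrac{r(r-1)}{2}\log J$; since the union consists of primes $\le J$ it has size $\ll J/\log J$, whence $|s_r|\ll J/(r\log J)+r\log J\ll J/(r\log J)$, using $r\le t\le J^{1/2}/\log J$ to absorb the second term. Taking $r=t-2,t-1,t$ gives the three required $b$'s. Your averaging strategy could in fact be salvaged along the same lines: the Bonferroni inequality gives $\sum_i|s_i|\le|\bigcup_i s_i|+\sum_{i<j}|s_i\cap s_j|\le\pi(J)+\binom{t}{2}\log J\ll J/\log J$ (again because $t^2\log J\le J/\log J$), and then at least $t/2\ge 50$ indices satisfy $\omega(b_i)\ll J/(t\log J)$. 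But both routes hinge on the pairwise-intersection bound coming from $\gcd(b_i,b_j)\le J$, which is absent from your proposal; without it the proof has a genuine gap.
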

\begin{proof}
The goal is to improve upon the trivial bound $\omega(b_i) \ll J/\log J$ by a factor of $t$. We observe that, by the prime number theorem, any distinct $b_i$ and $b_j$ have $\ll \log J/\log \log J\leq \log J$ prime factors in common, since $\text{gcd}(b_i,b_j)\leq J$. Let $s_i$ denote the set of prime factors of $b_i$. Note that $|s_i \cap s_j| \leq \log J$ for any $i\neq j$, and that each $s_i$ is contained in the set of all primes $\leq J$.

Without loss of generality we may assume that $|s_1| \geq |s_2| \geq \cdots \geq |s_t|$. We claim that
\begin{align}\label{eq:lower bound on size of union}
|s_1 \cup \cdots \cup s_r| \geq r|s_r| - \frac{r(r-1)}{2}\log J
\end{align}
for each $1\leq r \leq t$. This inequality trivially holds for $r=1$, so suppose the inequality holds for $r$ and we wish to show it holds for $r+1$.

Let $A = s_1 \cup \cdots \cup s_r$, so that by inclusion-exclusion we have
\begin{align*}
|s_1 \cup \cdots \cup s_{r+1}| &= |A \cup s_{r+1}| = |A| + |s_{r+1}| - |A \cap s_{r+1}| \\
&\geq r|s_r| - \frac{r(r-1)}{2}\log J + |s_{r+1}| - \sum_{i=1}^r |s_i \cap s_{r+1}|,
\end{align*}
where in the second line we have used the induction hypothesis and
\begin{align*}
|A \cap s_{r+1}| = \bigg|\bigcup_{i=1}^r (s_i \cap s_{r+1}) \bigg|\leq \sum_{i=1}^r |s_i \cap s_{r+1}|.
\end{align*}
Since $|s_i \cap s_{r+1}| \leq \log J$ for every $i$ and $|s_r| \geq |s_{r+1}|$, we obtain
\begin{align*}
|A \cap s_{r+1}| \geq (r+1)|s_{r+1}| - \frac{r(r+1)}{2}\log J,
\end{align*}
as desired. This completes the proof of the claim.

Applying \eqref{eq:lower bound on size of union} yields
\begin{align*}
|s_r| &\leq \frac{1}{r}|s_1 \cup \cdots \cup s_r| + \frac{r-1}{2}\log J
\end{align*}
for any $1\leq r \leq t$. Since each set $s_i$ is contained in the set of all primes $\leq J$ we have
\begin{align*}
|s_r| &\ll \frac{J}{r\log J} + r\log J,
\end{align*}
and since $r\leq t\leq J^{1/2}/\log J$ we have
\begin{align*}
|s_r| &\ll \frac{J}{r\log J}.
\end{align*}
We finish the proof by taking $i=t-2,j=t-1$, and $k=t$.
\end{proof}

We are now ready to obtain a bound when $s$ is large.

\begin{lemma}[Bound on height when $s$ is large]\label{lem:lower bound tn large s}
Let $J$ be a sufficiently large positive integer, and let $1\leq j_1 < \cdots < j_s < J$ be integers, where $J^{1/100}\leq s < J$. If $x$ and $y$ are positive integers with $y^2 = x(x+J)\prod_{i=1}^s (x+j_i)$ then
\begin{align*}
\log x &\leq \exp \bigg(O \Big(\frac{J}{t\log J} \Big) \bigg),
\end{align*}
where $t$ is any integer satisfying $J^{1/100} \leq t \leq \min (s, J^{1/2}/\log J)$.
\end{lemma}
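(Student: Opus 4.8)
The plan is to descend from the single equation of degree $s+2$ to a short system of generalized Pell equations whose coefficients are $J$--smooth with very few prime factors, and then to quote effective bounds for the solutions of such equations; Lemma \ref{lem:find b's with few prime factors} is the device that keeps the coefficients small. \emph{Step 1 (squarefree parts are $J$--smooth).} Write $m_0=x$, $m_i=x+j_i$ for $1\le i\le s$, and $m_{s+1}=x+J$, and factor each $m_i=d_i g_i^2$ with $d_i$ squarefree and $g_i\ge 1$. Since $\prod_{i=0}^{s+1}m_i=y^2$, every prime divides the product to an even power, so a prime $p$ dividing some $d_i$ (i.e. dividing $m_i$ to an odd power) divides a second $m_k$, $k\ne i$, to an odd power, hence $p\mid\gcd(m_i,m_k)$; but $\gcd(m_i,m_k)$ divides the nonzero integer $j_i-j_k$ (or $J$, if $\{i,k\}=\{0,s+1\}$), which is at most $J$ in absolute value, so $p\le J$. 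The same computation gives $\gcd(d_i,d_k)\le J$ for all $i\ne k$.

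\emph{Step 2 (few prime factors).} Since $t\le s$ we may apply Lemma \ref{lem:find b's with few prime factors} to $d_1,\dots,d_t$: these are $J$--smooth, squarefree, pairwise of $\gcd$ at most $J$, and $100\le t\le J^{1/2}/\log J$ as $J$ is large. It yields distinct $a,b,c\in\{1,\dots,t\}$ with $\omega(d_a),\omega(d_b),\omega(d_c)\ll J/(t\log J)$; since each $d_\bullet$ is squarefree with prime factors at most $J$, each of $d_a,d_b,d_c$ — and hence any product of two or three of them — is a $J$--smooth integer built from only $\ll J/(t\log J)$ primes, so of tightly controlled size.

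\emph{Step 3 (Pell equations and the height bound).} Subtracting $m_b=d_bg_b^2$ from $m_a=d_ag_a^2$ and multiplying by $d_b$ gives the generalized Pell equation $(d_bg_b)^2-(d_ad_b)g_a^2=-d_b(j_a-j_b)$, with coefficient $D=d_ad_b$ and nonzero right--hand side at most $d_bJ$ in absolute value; the index $c$ furnishes a second such equation, which our point also satisfies. (If $d_a=d_b$ the left side factors as $d_a(g_b-g_a)(g_b+g_a)$, forcing $g_a+g_b<J$ and $x<d_aJ^2$, which is stronger than needed, so we may assume $D$ is not a perfect square.) Since $D$ is $J$--smooth with $\ll J/(t\log J)$ prime factors, the regulator of $\mathbb{Q}(\sqrt D)$, and with it the fundamental solution of the corresponding Pell equation, is controlled; using the second equation to pin down the solution class and the power of the fundamental unit our point lies in, one bounds $\log g_b$, and $x\le m_b=d_bg_b^2$ then gives a bound of the stated shape. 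Alternatively one may avoid the fundamental--unit bookkeeping altogether: $z:=d_ad_bd_c g_ag_bg_c$ satisfies $z^2=d_ad_bd_c(x+j_a)(x+j_b)(x+j_c)$, a cubic in $x$ with distinct roots and height controlled by $d_ad_bd_cJ^3$, so Lemma \ref{lem:BEG bound on int points} with $n=3$ applies directly.

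The main obstacle is Step 3. A generalized Pell equation has infinitely many solutions, growing geometrically through powers of the fundamental unit, so a single equation controls only the smallest solution, not necessarily the one coming from $x$; forcing our solution to be essentially fundamental — either through the system built from three of the $x+j_i$, or by passing to the cubic and invoking Lemma \ref{lem:BEG bound on int points} — is the crux. Everything else is routine bookkeeping with smooth numbers and greatest common divisors.
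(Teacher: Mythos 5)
Your proof is correct and follows essentially the paper's route: the same factorization $x+j_i=d_ig_i^2$ into squarefree $J$-smooth parts with pairwise gcds at most $J$, the same appeal to Lemma \ref{lem:find b's with few prime factors} to extract three factors with $\omega\ll J/(t\log J)$, and then Lemma \ref{lem:BEG bound on int points} applied to an auxiliary hyperelliptic equation built from those three factors (your cubic $z^2=d_ad_bd_c(x+j_a)(x+j_b)(x+j_c)$ in $x$ plays exactly the role of the paper's quartic in $z_i$), so the Pell-equation/fundamental-unit sketch in the first half of your Step 3 is dispensable and the parenthetical cubic is the actual proof. One remark: as in the paper's own argument, the coefficient bound is only $\exp(O(J/t))$, so both proofs deliver $\log x\le\exp(O(J/t))$ rather than the $\exp(O(J/(t\log J)))$ displayed in the statement; this weaker form is what is actually invoked in the proof of Theorem \ref{thm:lower bounds for tn}, so your argument matches the paper's.
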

\begin{proof}
We write $j_0 = 0$ and $j_{s+1} = J$, so that the hyperelliptic equation is
\begin{align*}
y^2 = \prod_{i=0}^{s+2} (x+j_i).
\end{align*}
We observe that if $d \mid( x+j_i)$ and $d\mid (x+j_{i'})$ then $d\mid |j_i - j_{i'}| \leq J$, so the greatest common divisor of any two distinct $x+j_i$ is $\leq J$. Therefore, we may uniquely write
\begin{align*}
x+j_i = a_i y_i^2,
\end{align*}
where $a_i$ is divisible only by primes $\leq J$ and $y_i$ is divisible only by primes $> J$. By pulling out square factors of $a_i$ we obtain the more convenient factorization
\begin{align*}
x+j_i = b_i z_i^2,
\end{align*}
where $b_i$ is squarefree and divisible only by primes $\leq J$. Observe that $\text{gcd}(b_i,b_{i'}) \leq J$ for $i\neq i'$.

Choose any $t$ of the $b_i$, with $t$ as in statement of the lemma. Then by Lemma \ref{lem:find b's with few prime factors} there exist three distinct $b_i,b_k$, and $b_\ell$ with $\omega(b_i),\omega(b_k),\omega(b_\ell) \ll \frac{J}{t\log J}$. From the equations
\begin{align*}
x+j_i &= b_i z_i^2, \ \ \ \ \  x+j_k = b_k z_k^2, \ \ \ \ \ x+j_\ell = b_\ell z_\ell^2,
\end{align*}
we deduce
\begin{align}\label{eq:quartic hyperell lower bound tn}
(b_kb_\ell z_kz_\ell)^2 &= b_kb_\ell(x+j_k)(x+j_\ell) = b_kb_\ell(b_iz_i^2+j_k-j_i)(b_iz_i^2+j_\ell-j_i).
\end{align}
The quartic polynomial $b_kb_\ell(b_iz_i^2+j_k-j_i)(b_iz_i^2+j_\ell-j_i)$ has no repeated roots (since $j_k \neq j_\ell$) and has coefficients with absolute value
\begin{align*}
\leq J^2 b_ib_kb_\ell \leq J^{2+\omega(b_i)+\omega(b_k)+\omega(b_\ell)}\leq \exp\big( O \left( J/t\right) \big).
\end{align*}
We apply Lemma \ref{lem:BEG bound on int points} to the hyperelliptic equation \eqref{eq:quartic hyperell lower bound tn} to find
\begin{align*}
\log z_i \leq 16^{212\cdot 4^4} \exp\big( O \left( J/t\right) \big)^{50\cdot 4^4}\ll \exp\big( O \left( J/t\right) \big).
\end{align*}
Since $x+j_i = b_i z_i^2$ this implies $\log x \leq \exp\left( O \left( J/t\right) \right)$.
\end{proof}

\begin{proof}[Proof of Theorem \ref{thm:lower bounds for tn}]
Let $n$ be a large non-square integer. Write $J = t_n$ so that by definition we have
\begin{align*}
y^2 = n(n+J) \prod_{i=1}^s (n+j_i)
\end{align*}
for some integers $1\leq j_i < \cdots < j_s< J$ and $s\geq 0$. If $s=0$ then Lemma \ref{lem:s=0 lower bound tn} implies $n\leq J^2$. We may therefore assume $s\geq 1$.

If $J$ is bounded then by Lemma \ref{lem:BEG bound on int points} we see $n$ is effectively bounded in terms of $J$, so we may assume $J$ is sufficiently large. We define $t = \lfloor (J/\log J)^{1/6}\rfloor$. If $1\leq s\leq t$ then Lemma \ref{lem:lower bound tn small s} gives
\begin{align*}
\log n &\leq \exp \big(O(t^5\log J)\big) = \exp \Big(O\big(J^{5/6}(\log J)^{1/6} \big) \Big).
\end{align*}
If $t\leq s < J$ then applying Lemma \ref{lem:lower bound tn large s} gives
\begin{align*}
\log n &\leq \exp \big(O(J/t)\big) = \exp \Big(O\big(J^{5/6}(\log J)^{1/6} \big) \Big).
\end{align*}
Therefore, in any case we have
\begin{align*}
\log n &\leq \exp \Big(O\big(J^{5/6}(\log J)^{1/6} \big) \Big),
\end{align*}
which implies
\begin{align*}
\log \log n \ll J^{5/6} (\log J)^{1/6}.
\end{align*}
This last inequality implies in turn that $J \gg (\log \log n)^{6/5}(\log \log \log n)^{-1/5}$.
\end{proof}

One can likely obtain improvements to Theorem \ref{thm:lower bounds for tn} by working with a version of Lemma \ref{lem:BEG bound on int points} which exploits particular features of the hyperelliptic equation \eqref{eq:hyperelliptic defn lower bound tn}. For example, it should be possible to take advantage of the fact that the polynomial $x(x+J)\prod_{i=1}^s (x+j_i)$ has rational roots.

In certain situations, it is possible to obtain a strong bound on the height of an integral point on a hyperelliptic curve $y^2=P(x)$ through elementary methods. This is possible, for instance, when the polynomial $P$ is monic of even degree, in which case Runge's method may apply (see \cite[Chapter 4]{Mas2016}). In our case of interest, we may obtain strong bounds on the height of an integral point on the hyperelliptic curve
\begin{align*}
y^2 = x(x+J)\prod_{i=1}^s (x+j_i)
\end{align*}
when $s$ is even. There are several results in the literature in this direction e.g. \cite{Le1995, Sza2002}, and we provide an alternative account in the appendix

By Theorem \ref{thm:good int point bounds for even degree}, if the product
\begin{align*}
n(n+t_n)\prod_{i=1}^s (n+j_i) = y^2
\end{align*}
has an even number of terms (i.e. if $s$ is even) then $n \ll t_n^{O(t_n)}$ and hence
\begin{align}\label{eq:good lower bound on tn}
t_n \gg \frac{\log n}{\log \log n}.
\end{align}
We might expect that the bound $n \ll t_n^{O(t_n)}$ holds if $s$ is odd as well, so that the lower bound \eqref{eq:good lower bound on tn} holds in all cases (provided that $n$ is not a square). We therefore make the following conjecture, which features a little breathing room compared to \eqref{eq:good lower bound on tn}.

\begin{conjecture}
Let $c \in (0,1)$ be a fixed constant, and assume $n$ is a non-square integer which is sufficiently large in terms of $c$. Then
\begin{align*}
t_n \geq (\log n)^{1-c}.
\end{align*}
\end{conjecture}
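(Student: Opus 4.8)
The plan is to push the dichotomy already present in Section~\ref{sec:lower bounds tn} as far as it will go. When the hyperelliptic equation $y^2 = n(n+t_n)\prod_{i=1}^s(n+j_i)$ has a polynomial of \emph{even} degree, i.e.\ when $s$ is even, Theorem~\ref{thm:good int point bounds for even degree} (Runge's method) already gives $\log n \ll t_n\log t_n$, hence $t_n \gg \log n/\log\log n$, which is far stronger than $(\log n)^{1-c}$. So the whole problem is to treat $s$ odd, and the strategy would be to \emph{manufacture} an auxiliary curve of even degree to which Runge's method applies.

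\textbf{A combinatorial reduction for $s$ odd.} Put $j_0=0$ and $j_{s+1}=J:=t_n$, so that $y^2=\prod_{i=0}^{s+1}(n+j_i)$ and any two of the $n+j_i$ have gcd at most $J$. As in the proof of Lemma~\ref{lem:lower bound tn large s}, write $n+j_i=b_iz_i^2$ with $b_i$ squarefree and supported on primes $\le J$. If $b_i=b_k$ for some $i\neq k$ then $b_i(z_i^2-z_k^2)=j_i-j_k$ with $|j_i-j_k|<J$ forces $z_i=z_k$ (otherwise $|z_i^2-z_k^2|\ge z_i+z_k$), hence $j_i=j_k$, a contradiction; so we may assume $b_0,\dots,b_{s+1}$ are pairwise distinct. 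Associate to $b_i$ the vector $\tilde b_i=(\bar b_i,1)\in\mathbb{F}_2^{\pi(J)+1}$, where $\bar b_i$ is the exponent-parity vector of $b_i$ over the primes $\le J$ and the last coordinate records set sizes. If the $\tilde b_i$ are linearly dependent over $\mathbb{F}_2$ — which is automatic once $s+2>\pi(J)+1$ — then there is a nonempty \emph{even}-sized $T\subseteq\{0,\dots,s+1\}$ with $\prod_{i\in T}b_i=\square$, so that $v:=\sqrt{\prod_{i\in T}(n+j_i)}$ is an integer. Then $(n,v)$ is an integral point on $v^2=\prod_{i\in T}(x+j_i)$, whose defining polynomial is monic of even degree $|T|\le s+2\le J+1$ with distinct roots in $[-J,0]$; Theorem~\ref{thm:good int point bounds for even degree} then gives $\log n\ll|T|\log J\ll J\log J$, and therefore $t_n=J\gg\log n/\log\log n\gg(\log n)^{1-c}$.

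\textbf{The remaining case and the main obstacle.} The argument above is complete when $s$ is even and when $s$ is odd with $s\ge\pi(J)$, so with $s\gtrsim J/\log J$. What survives is $s$ odd with $1\le s<\pi(J)$: here the $\tilde b_i$ can be independent, the only square-product subset of $\{0,\dots,s+1\}$ is the whole, odd-sized, set, and one is thrown back on the odd-degree curve $y^2=\prod_{i=0}^{s+1}(x+j_i)$ itself — equivalently, on the fact that the squarefree part of $n$ need not equal $1$, which is exactly what prevents the substitution $n=dw^2$ from producing an even-degree model with square leading coefficient. On this range the only unconditional input is Lemma~\ref{lem:BEG bound on int points}, which is exponential in the coefficient size $H\le J^{s+1}$ and thus only yields $\log n\le\exp(O(s^5\log J))$ — hopeless here, since the conjecture needs $\log n$ bounded \emph{polynomially in $\log H$}. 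I expect this to be the true sticking point. One natural escape is conditional: under the ABC conjecture the results of Elkies and Langevin \cite{E1991,L1993} referenced in the introduction give $\log n\ll_\epsilon H^\epsilon\ll_\epsilon J^\epsilon$ already for $s=1$, far more than enough. Unconditionally one would have to exploit the rational roots and their confinement to an interval of length $J$ to replace linear forms in logarithms by an algebraic descent, or to extend the even-degree Runge bound across the parity barrier; neither seems within reach of current methods, which is presumably why the statement remains conjectural.
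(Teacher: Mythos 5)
The statement you were asked about is a conjecture: the paper does not prove it, and offers only motivation, namely that when $s$ is even Theorem~\ref{thm:good int point bounds for even degree} (Runge's method) gives $n \ll t_n^{O(t_n)}$, hence $t_n \gg \log n/\log\log n$, together with an analogy to the Hall--Lang conjecture. Your proposal correctly reproduces this motivation and, to its credit, is honest that it does not close the odd-degree case, so there is no claim to grade as a completed proof. What you add beyond the paper is genuine and, as far as I can check, correct: writing $n+j_i=b_iz_i^2$ with $b_i$ squarefree and $J$-smooth (exactly as in Lemma~\ref{lem:lower bound tn large s}) and augmenting the exponent-parity vectors by a coordinate recording cardinality, you extract, whenever $s\ge\pi(J)$, a nonempty \emph{even}-sized subset $T$ with $\prod_{i\in T}(n+j_i)=\square$, and then the monic even-degree bound applies to the subproduct curve, giving $\log n\ll J\log J$ also for odd $s$ in that range. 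This genuinely improves on the paper's Lemma~\ref{lem:lower bound tn large s}, whose conclusion for large $s$ is only $\log n\le\exp(O(J/t))$. Two small repairs are needed but are routine: Theorem~\ref{thm:good int point bounds for even degree} is stated with smallest root $0$ and largest root $J$, so you must shift $x\mapsto x+\min_{i\in T}j_i$ (harmless, since the bound only improves when $J$ is replaced by the spread of the chosen $j_i$), and the case $|T|=2$ falls outside $u\ge2$ and should be handled by Lemma~\ref{lem:s=0 lower bound tn}; also your pairwise-distinctness argument for the $b_i$ tacitly uses $n>J^2$, which one may assume since otherwise $t_n\ge\sqrt{n}$ trivially suffices.

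The remaining obstruction you identify is exactly the right one, and it is why the statement is a conjecture rather than a theorem: for odd $s<\pi(J)$ the parity vectors can be independent, the only square subproduct is the full odd-degree one, and the only unconditional tool available is Lemma~\ref{lem:BEG bound on int points}, which is exponential in the height $J^{s+1}$ and hence yields nothing like a bound polynomial in $\log$ of the height. So your assessment matches the authors' implicit position; the conditional remark via ABC and the suggestion to exploit the rational roots are reasonable directions, but neither is carried out here nor in the paper.
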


In making this conjecture we reason by analogy with conjectures for elliptic curves. Given a monic quartic polynomial $p(x)$ of height $H$, one may show, using methods similar to those of the appendix, that any integral point on the curve $y^2=p(x)$ has height $\ll H^{O(1)}$. The Hall-Lang conjecture (see \cite[Conjecture 5]{Lang1983}, also \cite[p. 1122]{Sta2016}) says the same bound should hold when $y^2=p(x)$ is an elliptic curve in minimal Weierstrass form (so that, in particular, $p(x)$ is a cubic).

\appendix

\section{Bounds for integral points on hyperelliptic curves of even degree}

We let $J$ be sufficiently large, and let $u\geq 2$ be an integer. Let $0 = j_1 <\cdots < j_{2u} = J$ be integers. We write
\begin{align}\label{eq:defn of poly P}
P(x) = \prod_{i=1}^{2u}(x+j_i).
\end{align}
We wish to bound the size of a positive integer $x$ which satisfies $P(x) = y^2$. There is no harm in assuming $u\geq 2$ since if $u=1$ we already have a satisfactory bound via Lemma \ref{lem:s=0 lower bound tn}.

We first show that one may write $P(x) = f(x)^2+g(x)$, where $f(x)$ is monic of degree $u$, $\deg(g) < \deg(f)$, and the coefficients of $f$ are rational numbers with denominators of controlled size.

\begin{lemma}[$P$ is close to a square]\label{lem:write P as f2 plus g}
Let $P(x)$ be given as in \eqref{eq:defn of poly P}. There exists a monic polynomial $f(x) = \sum_{i=0}^u a_i x^i \in \mathbb{Q}[x]$ with $|a_i| \leq (u-i)^{u-i}(uJ)^{u-i}$ and $4^{u-i} a_i \in \mathbb{Z}$, and a polynomial $g(x) = \sum_{i=0}^{u-1} b_i x^i \in \mathbb{Q}[x]$ with $|b_i| \leq 5u^{4u-2i}J^{2u-i}$ such that $P(x) = f(x)^2 + g(x)$.
\end{lemma}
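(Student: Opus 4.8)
The plan is to take $f(x)$ to be the polynomial part of the formal square root of $P(x)$ expanded in descending powers of $x$, and then to set $g(x) := P(x) - f(x)^2$. Precisely, write $Q(w) := \prod_{i=1}^{2u}(1+j_iw) \in \mathbb{Z}[w]$, so $Q(0)=1$ and one has the formal power series expansion $\prod_{i=1}^{2u}(1+j_iw)^{1/2} = \sum_{m\ge 0}d_mw^m$ with
\begin{align*}
d_m = \sum_{m_1+\cdots+m_{2u}=m}\ \prod_{i=1}^{2u}\binom{1/2}{m_i}j_i^{m_i}.
\end{align*}
Since $\sqrt{P(x)} = x^u\sqrt{Q(1/x)} = \sum_{m\ge 0}d_mx^{u-m}$ formally, I would define $f(x) := \sum_{m=0}^u d_mx^{u-m}$, so that $a_i = d_{u-i}$ and $a_u = d_0 = 1$; thus $f$ is monic of degree $u$. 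Writing $\sqrt P = f + R$ with $R := \sum_{m\ge u+1}d_mx^{u-m}$ a series in negative powers of $x$ only, we get $P - f^2 = 2fR + R^2$, which as a polynomial has degree $\le u-1$; hence $g := P - f^2$ has degree $< u$, as required.

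The two facts about binomial coefficients that drive the estimates are $|\binom{1/2}{m}| \le \tfrac12$ for $m\ge 1$ (while $\binom{1/2}{0}=1$), and $4^m\binom{1/2}{m} \in \mathbb{Z}$ (indeed $4^m\binom{1/2}{m} = \pm 2C_{m-1}$, a Catalan number). The denominator claim $4^{u-i}a_i \in \mathbb{Z}$ is then immediate from the displayed formula for $d_m$, since $\prod_i 4^{m_i} = 4^m$. For the size of $a_i = d_{u-i}$ I would group the compositions of $m$ into $2u$ nonnegative parts by the number $r$ of nonzero parts; there are $\binom{2u}{r}\binom{m-1}{r-1}$ of these and each contributes at most $2^{-r}J^m$, whence
\begin{align*}
|d_m| \le J^m\sum_{r=1}^m 2^{-r}\binom{2u}{r}\binom{m-1}{r-1} \le J^m\sum_{r=1}^m \frac{u^rm^{r-1}}{r!\,(r-1)!} \le J^m\sum_{r=1}^m u^rm^{r-1} \le (umJ)^m
\end{align*}
for $1\le m\le u$; taking $m=u-i$ gives exactly $|a_i| \le (u-i)^{u-i}(uJ)^{u-i}$ (and the case $i=u$ is just $a_u=1$).

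For $g$, write $b_i = [x^i]P - [x^i]f^2 = e_{2u-i}(j) - \sum_{p+q=i}a_pa_q$, where $e_k(j)$ is the $k$-th elementary symmetric function of $j_1,\dots,j_{2u}$. The symmetric-function term is harmless: $|e_{2u-i}(j)| \le \binom{2u}{i}J^{2u-i} \le 4^uJ^{2u-i}$. For the convolution (which has at most $i+1\le u$ terms, since $i\le u-1$), bounding crudely by the number of terms loses a factor of $u$; instead I would note that, setting $k_1 = u-p$ and $k_2 = u-q$ with $k_1+k_2 = 2u-i$ fixed and $k_1,k_2\in[u-i,u]$, the quantity $k_1^{k_1}k_2^{k_2}$ is log-convex in $k_1$ and hence maximized at an endpoint, giving $|a_pa_q| \le u^u(u-i)^{u-i}(uJ)^{2u-i}$. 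Combined with Bernoulli's inequality $(u/(u-i))^{u-i}\ge 1+i$, which absorbs the factor $i+1$, this yields $\big|\sum_{p+q=i}a_pa_q\big| \le u^{4u-2i}J^{2u-i}$, so that $|b_i| \le (4^u + u^{4u-2i})J^{2u-i} \le 5u^{4u-2i}J^{2u-i}$ using $4^u \le 4u^{4u-2i}$ for $u\ge 2$.

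I expect the main obstacle to be exactly this last point: squeezing the convolution bound for the coefficients of $g$ down to the claimed shape $5u^{4u-2i}J^{2u-i}$ rather than to something weaker by a factor of $u$, which forces one to exploit the log-convexity/endpoint structure of $\sum_{p+q=i}a_pa_q$ instead of a triangle inequality over all terms. The remaining ingredients — the series definition of $f$, the degree count for $g$, and the verification that $4^m\binom{1/2}{m}$ is an integer — are routine.
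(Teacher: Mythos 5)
Your proposal is correct, and the estimates all check out: the composition-count bound gives $|d_m|\le J^m\sum_{r=1}^m u^rm^{r-1}/(r!(r-1)!)\le (umJ)^m$, which at $m=u-i$ is exactly the claimed $(u-i)^{u-i}(uJ)^{u-i}$; the integrality of $4^m\binom{1/2}{m}$ (twice a Catalan number) gives $4^{u-i}a_i\in\mathbb{Z}$; and the endpoint/log-convexity bound $k_1^{k_1}k_2^{k_2}\le u^u(u-i)^{u-i}$ together with Bernoulli's inequality $(1+\tfrac{i}{u-i})^{u-i}\ge 1+i$ does yield $|\sum_{p+q=i}a_pa_q|\le u^{4u-2i}J^{2u-i}$, whence $|b_i|\le 5u^{4u-2i}J^{2u-i}$ using $4^u\le 4u^{4u-2i}$ for $u\ge 2$.

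Your route is genuinely different from the paper's, though it produces the same polynomial $f$ (which is forced: a monic degree-$u$ $f$ with $\deg(P-f^2)<u$ is unique). The paper defines the $a_i$ by a top-down recursion matching the coefficients of $P$ and $f^2$ in degrees $2u$ down to $u$, and then proves both the denominator statement (denominator of $a_i$ divides $2^{2(u-i)-1}$) and the size bound $|a_{u-k}|\le (kuJ)^k$ by induction along that recursion, with separate base cases $k=1,2$; the coefficients of $g$ are then bounded by showing $\sum_{0\le\ell\le u-1}(1-\ell/u)^{u-\ell}=O(1)$. You instead read the coefficients off the binomial series for $\sqrt{P}$ in descending powers of $x$, which buys you a closed formula for $a_i$: integrality of denominators becomes the classical fact about $4^m\binom{1/2}{m}$ rather than an induction, and the size bound becomes a counting argument over compositions rather than a delicate inductive inequality. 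Your treatment of $g$ is also structured differently: where the paper sums the full convolution and controls it by the exponentially decaying factor above, you bound every term by its endpoint value and absorb the term count via Bernoulli; both land on the same constant $5$. The one point where your write-up is slightly terser than it should be is the formal-series bookkeeping (working in $\mathbb{Q}((1/x))$ so that $P-f^2=2fR+R^2$ legitimately shows $\deg g\le u-1$), but that is routine and correct as you indicate.
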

\begin{proof}
It is not difficult to choose a polynomial $f$ such that $f^2$ matches the coefficients of $P$ down to order $x^u$, but it is somewhat tedious to prove bounds for the coefficients of $f$.

Write $P(x) = \sum_{i=0}^{2u} q_i x^i \in \mathbb{Z}[x]$ and note that $0\leq q_i \leq {{2u} \choose i} J^{2u-i}$ with $q_{2u}=1$. Now write $f(x) = \sum_{i=0}^u a_i x^i$, where the $a_i$ are rational numbers to be determined and $a_u=1$. Since 
\begin{align*}
f(x)^2 = \sum_{\ell=0}^{2u} x^\ell \sum_{\max(\ell-u,0)\leq i \leq \min(\ell,u)}a_i a_{\ell-i}
\end{align*}
we wish to choose the $a_i$ so that
\begin{align}\label{eq:ell coeff equation}
\sum_{\ell-u\leq i \leq u}a_i a_{\ell-i} = q_\ell
\end{align}
for $u\leq \ell \leq 2u$. We choose the coefficients $a_i$, beginning with $a_u$ and proceeding successively down to $a_0$. Actually, we have already chosen $a_u = 1$, so \eqref{eq:ell coeff equation} holds when $\ell=2u$.

Assume we have chosen the coefficients $a_u,\ldots,a_{u-r}$ for some $0\leq r \leq u-1$ so that \eqref{eq:ell coeff equation} holds for $2u-r \leq \ell \leq 2u$. When $\ell = 2u-r-1$ we have
\begin{align*}
\sum_{\ell-u\leq i \leq u}a_i a_{\ell-i} = 2a_{u-r-1} + \sum_{u-r\leq i \leq u-1}a_i a_{2u-r-1-i},
\end{align*}
so \eqref{eq:ell coeff equation} holds if
\begin{align*}
2a_{u-r-1} + \sum_{u-r\leq i \leq u-1}a_i a_{2u-r-1-i} = q_{2u-r-1}.
\end{align*}
With $i$ in the stated range we have $i,2u-r-1-i \geq u-r$, so the coefficient $a_{u-r-1}$ appears only in the first term. We may therefore choose
\begin{align*}
a_{u-r-1} = \frac{1}{2}\Big(q_{2u-r-1} -  \sum_{u-r\leq i \leq u-1}a_i a_{2u-r-1-i}\Big),
\end{align*}
so \eqref{eq:ell coeff equation} holds for $\ell = 2u-r-1$. By induction we find
\begin{align}\label{eq:inductive defn of coeff aj}
a_j = \frac{1}{2}\Big(q_{u+j} - \sum_{j+1\leq i \leq u-1}a_i a_{u-i+j} \Big)
\end{align}
for $0\leq j \leq u-1$.

It follows readily from \eqref{eq:inductive defn of coeff aj} and induction that the denominator of $a_i$ divides $2^{2(u-i)-1}$, $0\leq i \leq u-1$ (here we use that the $q_i$ are integers), and therefore $2^{2(u-i)}a_i = 4^{u-i}a_i \in \mathbb{Z}$ for $0\leq i \leq u$. 

If we rewrite $j=u-k$ in \eqref{eq:inductive defn of coeff aj} and use the upper bound for $q_j$ then we deduce
\begin{align}\label{eq:inequality for a coeffs}
|a_{u-k}| &\leq \frac{1}{2}\bigg({{2u} \choose k} J^k + \sum_{u-k+1 \leq i \leq u-1} |a_{u-(u-i)}| |a_{u-(i+k-u)}| \bigg).
\end{align}
We claim that $|a_{u-k}| \leq (kuJ)^k$ for $1\leq k \leq u$. Taking $k=1$ in \eqref{eq:inequality for a coeffs} gives $|a_{u-1}| \leq \frac{1}{2}{{2u}\choose 1} J = uJ$, so the claim is true for $k=1$. When we take $k=2$ the sum over $i$ in \eqref{eq:inequality for a coeffs} contains the single term $i=u-1$, so the inequality is $|a_{u-2}| \leq \frac{1}{2}\left({{2u} \choose 2} J^2 + |a_{u-1}|^2\right) \leq \frac{
3}{2}u^2J^2$, where we have used the bound for $|a_{u-1}|$. Hence the claim holds for $k=2$ since $3/2\leq 2^2$. 

Now assume $k\geq 3$, and assume that the claim holds for all $|a_{u-j}|$ with $j < k$. Then by \eqref{eq:inequality for a coeffs} we have
\begin{align*}
|a_{u-k}| &\leq \frac{1}{2}\bigg({{2u} \choose k}J^k + \sum_{u-k+1 \leq i \leq u-1} (u-i)^{u-i}(uJ)^{u-i} (i+k-u)^{i+k-u}(uJ)^{i+k-u} \bigg) \\
&\leq \frac{(uJ)^k}{2}\bigg(\frac{2^k}{k!} + \sum_{1\leq n \leq k-1} n^n (k-n)^{k-n}\bigg),
\end{align*}
where in going from the first line to the second we have changed variables. We note that by symmetry
\begin{align*}
\sum_{1\leq n \leq k-1} n^n (k-n)^{k-n} &\leq 2k^{k-1} + 2\sum_{2\leq n \leq k/2}n^n (k-n)^{k-n} \\ 
&= 2k^{k-1} + 2k^k\sum_{2\leq n \leq k/2}\left( \frac{n}{k}\right)^n \Big(\frac{k-n}{k} \Big)^{k-n} \\
&\leq 2k^{k-1} + 2k^k\sum_{n\geq 2} 2^{-n} = k^k\Big(1 + \frac{2}{k} \Big),
\end{align*}
and therefore
\begin{align*}
\frac{1}{2}\Big(\frac{2^k}{k!} + \sum_{1\leq n \leq k-1} n^n (k-n)^{k-n}\Big) &\leq k^k \Big(\frac{1}{2} + \frac{1}{k} + \frac{2^{k-1}}{k! k^k}\Big).
\end{align*}
Since
\begin{align*}
\frac{1}{2} + \frac{1}{k} + \frac{2^{k-1}}{k! k^k} &\leq \frac{1}{2} + \frac{1}{k} + \frac{1}{2}\Big(\frac{2e}{k^2} \Big)^k\leq 0.95 \leq 1
\end{align*}
for $k\geq 3$ this completes the proof of the claim.

It remains to prove the upper bound on the coefficients of $g$. Since $g = P-f^2$ and by construction $\deg(g) \leq u-1$ we have
\begin{align*}
g(x) = \sum_{i=0}^{u-1}b_ix^i=\sum_{i=0}^{u-1} x^i \Big(q_i - \sum_{0\leq \ell \leq i} a_\ell a_{i-\ell} \Big).
\end{align*}
By the triangle inequality and the bounds for $q_j,a_j$ we obtain
\begin{align*}
|b_i| \leq {{2u} \choose i} J^{2u-i} + (uJ)^{2u-i}\sum_{0\leq \ell \leq i} (u-\ell)^{u-\ell}(u-i+\ell)^{u-i+\ell}.
\end{align*}
We multiply and divide by $u^{2u-i}$ to deduce
\begin{align*}
\sum_{0\leq \ell \leq i} (u-\ell)^{u-\ell}(u-i+\ell)^{u-i+\ell} &= u^{2u-i} \sum_{0\leq \ell \leq i} \Big(1 - \frac{\ell}{u} \Big)^{u-\ell}\Big(1 - \frac{i-\ell}{u}\Big)^{u-i+\ell} \\
&\leq u^{2u-i}\sum_{0\leq \ell \leq u-1} \Big(1 - \frac{\ell}{u} \Big)^{u-\ell}.
\end{align*}
This latter sum over $\ell$ is bounded above by an absolute constant independent of $u$. The contribution from $0\leq \ell \leq u/2$ is
\begin{align*}
&\leq \sum_{0\leq \ell \leq u/2} \Big(1 - \frac{\ell}{u} \Big)^{u/2} \leq \sum_{0\leq \ell \leq u/2}\exp (-\ell/2)\leq \frac{e^{1/2}}{e^{1/2}-1},
\end{align*}
and the contribution from $u/2 < \ell \leq u-1$ is
\begin{align*}
&\leq \sum_{u/2 < \ell \leq u-1} 2^{-(u-\ell)}\leq \sum_{j=1}^\infty 2^{-j} = 1.
\end{align*}
Therefore
\begin{align*}
|b_i| &\leq u^{4u-2i}J^{2u-i}\Big(\frac{2^i}{i!}u^{-(4u-3i)} + 1 + \frac{e^{1/2}}{e^{1/2}-1} \Big) \leq 5u^{4u-2i}J^{2u-i},
\end{align*}
the second inequality following since $u\geq 2$.
\end{proof}

With Lemma \ref{lem:write P as f2 plus g} in hand we can prove that any positive integer $n$ which satisfies $y^2=P(n)$ for some $y \in \mathbb{N}$ is efficiently bounded in terms of $P$.

\begin{theorem}[Strong height bound in even degree]\label{thm:good int point bounds for even degree}
Let $P(x) \in \mathbb{Z}[x]$ be defined as in \eqref{eq:defn of poly P}. If $n$ is a positive integer such that $P(n)=y^2$ for some $y \in \mathbb{N}$ then $n\leq 5 (2u)^{4u}J^{2u}$.
\end{theorem}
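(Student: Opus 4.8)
The plan is to use the Runge-type decomposition $P(x) = f(x)^2 + g(x)$ from Lemma~\ref{lem:write P as f2 plus g}, where $f$ is monic of degree $u$ and $\deg(g) \leq u-1$, to squeeze $y$ between two consecutive values of $f(n)$ whenever $n$ is large. First I would observe that since $4^{u-i}a_i \in \mathbb{Z}$ for all $i$, the quantity $2^{2u}f(n) = \sum_i 2^{2u}a_i n^i$ is an integer for every integer $n$; likewise $2^{4u}g(n)$ is an integer (or one simply works with $h(x) := 2^{2u}f(x)$, a monic-up-to-$2^{2u}$ integer polynomial). The key point is that if $P(n) = y^2$ with $n$ large, then $y$ is close to $f(n)$: writing $y^2 - f(n)^2 = g(n)$, we get $|y - f(n)| \cdot |y + f(n)| = |g(n)|$, and since $y + f(n) \asymp n^u$ while $|g(n)| \ll n^{u-1}$ (using the coefficient bounds $|b_i| \leq 5u^{4u-2i}J^{2u-i}$), this forces $|y - f(n)|$ to be small once $n$ exceeds a threshold determined by the coefficient sizes.

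The main step is then a gap argument. The integer $2^{2u}y$ satisfies $(2^{2u}y)^2 = 2^{4u}P(n) = (2^{2u}f(n))^2 + 2^{4u}g(n)$, so $2^{2u}y$ and $2^{2u}f(n)$ are integers (the latter by the denominator bound) whose squares differ by the integer $2^{4u}g(n)$. If $g(n) \neq 0$, then $2^{2u}y$ and $2^{2u}f(n)$ are distinct integers with $|(2^{2u}y)^2 - (2^{2u}f(n))^2| = 2^{4u}|g(n)|$; since any two distinct integers $a \neq b$ satisfy $|a^2 - b^2| \geq |a| + |b| \geq |a+b|$, and $|2^{2u}y + 2^{2u}f(n)| \gg 2^{4u} n^u$ for $n$ large, we deduce $2^{4u}|g(n)| \gg 2^{4u}n^u$, i.e.\ $|g(n)| \gg n^u$. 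But the coefficient bounds give $|g(n)| \leq \sum_{i=0}^{u-1} 5u^{4u-2i}J^{2u-i} n^i \ll u^{4u}J^{2u} n^{u-1}$, so comparing the two estimates yields $n \ll u^{4u}J^{2u}$, which is the claimed bound up to constants; one then sharpens the constants to land exactly at $5(2u)^{4u}J^{2u}$. If instead $g(n) = 0$, then $P(n) = f(n)^2$ is already a square with no constraint, so one must handle this separately: $g$ is a nonzero polynomial (as $P$ is not a perfect square, since $P$ has the simple root $-j_1 = 0$ — here we use $j_1 = 0$ and the $j_i$ distinct so not all multiplicities are even, hence $\deg g \geq 0$ and $g \not\equiv 0$), so $g(n) = 0$ has at most $u-1$ roots, all of size $\ll u^{4u}J^{2u}$ by a root-size bound (Cauchy's bound applied to $g$), and these are absorbed into the final estimate.

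I would organize the write-up as: (i) set $h(x) = 2^{2u}f(x) \in \mathbb{Z}[x]$ and record $|h$'s coefficients$| \leq 2^{2u}(u-i)^{u-i}(uJ)^{u-i}$ and $2^{4u}g(x) \in \mathbb{Z}[x]$ with coefficients $\leq 2^{4u} \cdot 5u^{4u-2i}J^{2u-i}$; (ii) note $(2^{2u}y)^2 = h(n)^2 + 2^{4u}g(n)$; (iii) if $n > 5(2u)^{4u}J^{2u}$, bound $|2^{4u}g(n)|$ against $h(n) = n^u + (\text{lower order})$ to show $0 < |2^{4u}g(n)| < 2h(n) - 1 \leq |(h(n)+1)^2 - h(n)^2| \wedge |h(n)^2 - (h(n)-1)^2|$ — wait, more carefully, to show $|2^{4u}g(n)|$ is strictly between $0$ and $2h(n)-1$ so that $h(n)^2 + 2^{4u}g(n)$ cannot be a perfect square, unless $g(n) = 0$; (iv) dispatch the case $g(n)=0$ via Cauchy's root bound on $g$. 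The hard part will be step (iii): making the inequality $|2^{4u}g(n)| < 2 h(n) - 1$ hold for all $n > 5(2u)^{4u}J^{2u}$ requires carefully summing the geometric-type series $\sum_i |b_i| n^i$ and comparing with $n^u$, i.e.\ checking that $5u^{4u-2i}J^{2u-i}n^i / n^u \leq 5u^{4u}J^{2u}/n^{u-i} \cdot u^{-2i}J^{-i} \leq$ something summable, and tracking constants precisely enough that the threshold is exactly $5(2u)^{4u}J^{2u}$ rather than merely $O(u^{4u}J^{2u})$; this is a bookkeeping matter but demands care with the factor $2^{2u}$ from the denominators and the factor $5$ from the $b_i$ bounds. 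A subtle secondary point is ensuring $g \not\equiv 0$: since $-j_1 = 0$ is a root of $P$ of multiplicity exactly $1$ (as $0 = j_1 < j_2 < \cdots$), $P$ is not a square of a polynomial, hence $f^2 \neq P$ and $g \neq 0$.
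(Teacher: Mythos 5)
Your proposal is correct, and it rests on exactly the same foundation as the paper — the decomposition $P=f^2+g$ of Lemma \ref{lem:write P as f2 plus g} with the denominator control $4^u f,\,16^u g\in\mathbb{Z}[x]$ and the coefficient bounds — but you finish differently. The paper expands $y=f(n)\bigl(1+g(n)/f(n)^2\bigr)^{1/2}$, multiplies by $4^u$, and locates the integer $4^uy-4^uf(n)$ in a short interval around $4^ug(n)/(2f(n))$ that is too close to $0$ to contain a nonzero integer; you instead apply the classical gap-between-squares argument to the integers $a=2^{2u}y$ and $b=2^{2u}f(n)$, whose squares differ by the nonzero integer $2^{4u}g(n)$, so that $2^{4u}|g(n)|\ge a+b\ge 2^{2u}f(n)\ge 2^{2u-1}n^u$, contradicting $|g(n)|\ll u^{2u+2}J^{u+1}n^{u-1}$. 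This is a perfectly valid and arguably more elementary route, and the constants are not tight there: the main case only forces $n\ll 4^u u^{2u+2}J^{u+1}$, well inside $5(2u)^{4u}J^{2u}$, so the delicate bookkeeping you anticipate in step (iii) is not the bottleneck. Two small points deserve care. First, your handling of $g(n)=0$ via Cauchy's root bound works, but the bound you quote, $\ll u^{4u}J^{2u}$, omits the factor coming from the leading coefficient of $g$, which may be as small as $16^{-u}$; using $16^u g\in\mathbb{Z}[x]$ one gets roots of size $<1+5\cdot 16^u u^{4u}J^{2u}=1+5(2u)^{4u}J^{2u}$, which for integer $n$ still yields $n\le 5(2u)^{4u}J^{2u}$, so it is this case (not the main one) that dictates the threshold; the paper instead uses a divisibility argument ($n$ divides $16^u b_k$ for the lowest nonzero coefficient $b_k$), which lands on the same bound more directly. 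Second, the inequality ``$|a^2-b^2|\ge|a|+|b|$ for distinct integers'' requires $a\ne-b$ as well as $a\ne b$; since here $a,b>0$ and $a^2\ne b^2$, this is harmless, but it should be stated. Your observation that $g\not\equiv 0$ because $P$ has simple roots matches the paper's argument.
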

\begin{proof}
We assume for contradiction that $n > 5 (2u)^{4u}J^{2u}$ and $P(n) = y^2$. Let $f(x)$ and $g(x)$ be the polynomials of Lemma \ref{lem:write P as f2 plus g}. We note first that $g(x)$ is not the zero polynomial, since otherwise $P(x)=f(x)^2$ would have repeated roots but this contradicts the definition of $P$.

We claim that $g(n) \neq 0$. Since $g(x) \neq 0$ there is a least $0\leq k \leq u-1$ such that $b_k \neq 0$. By Lemma \ref{lem:write P as f2 plus g} we see that $16^u g(x) \in \mathbb{Z}[x]$. If $g(n)=0$ then $x-n$ divides $g(x)$ and therefore $x-n$ divides
\begin{align*}
\sum_{0\leq j \leq u-k-1}b_{k+j} 16^u x^j \in \mathbb{Z}[x].
\end{align*}
Then $n$ divides $16^u|b_k|$, but $16^u |b_k|$ is a nonzero integer of size $\leq 5 (2u)^{4u}J^{2u} < n$, so $n$ does not divide $16^u |b_k|$ and $g(n) \neq 0$.

We next need an upper bound on $|g(n)|$. By the triangle inequality and Lemma \ref{lem:write P as f2 plus g}
\begin{align*}
|g(n)| &\leq 5\sum_{i=0}^{u-1} n^i u^{4u-2i}J^{2u-i} = 5n^{u-1}u^{2u+2}J^{u+1} \sum_{\ell=0}^{u-1} \Big(\frac{u^2 J}{n} \Big)^\ell \leq \frac{5}{1 - u^2J/n} n^{u-1}u^{2u+2}J^{u+1} \\
&\leq 6n^{u-1}u^{2u+2}J^{u+1},
\end{align*}
the sum over $\ell$ coming from a change of variables $i = u-1-\ell$ and the last inequality following since $n > (2u)^{4u} J$ and $u\geq 2$.

Lastly, we need a lower bound on $f(n)$. From Lemma \ref{lem:write P as f2 plus g} we obtain
\begin{align*}
f(n) &\geq n^u - \sum_{i=0}^{u-1} (u-i)^{u-i} (uJ)^{u-i} n^i\geq n^u - u^2 Jn^{u-1} \sum_{\ell=0}^{u-1} \Big(\frac{u^2 J}{n} \Big)^\ell\geq \frac{1}{2}n^u,
\end{align*}
say, since $n > (2u)^{4u} J$ and $u\geq 2$.

From the upper bound for $|g(n)|$, the lower bound for $f(n)$, and the lower bound for $n$ we have 
\begin{align*}
\frac{|g(n)|}{f(n)} &\leq 12\cdot 16^{-u} u^{-2u} J^{1-u} \leq \frac{1}{100},
\end{align*}
say, since $u\geq 2$. Recall now that we assume $y^2 = P(n) = f(n)^2\big (1 + \frac{g(n)}{f(n)^2}\big)$. Taking square roots gives $y = f(n)\big(1 + \frac{g(n)}{f(n)^2} \big)^{1/2}$, where we recall that $y$ is a positive integer. For any real $y$ with $|y| \leq 3/4$, say, we have
\begin{align*}
1 + \frac{y}{2} - \frac{y^2}{4} \leq \sqrt{1+y} \leq 1 + \frac{y}{2},
\end{align*}
and therefore
\begin{align*}
\bigg|y - f(n) - \frac{g(n)}{2f(n)} \bigg| \leq \frac{g(n)^2}{4f(n)^3}.
\end{align*}
We multiply through by $4^u$ to obtain
\begin{align*}
\bigg|4^uy - 4^uf(n) - \frac{4^ug(n)}{2f(n)} \bigg| \leq \frac{4^ug(n)^2}{4f(n)^3},
\end{align*}
where we note that $4^u f(n) \in \mathbb{Z}$ so $4^uy - 4^uf(n) \in \mathbb{Z}$. It follows that there is an integer in the interval
\begin{align*}
\bigg[\frac{4^ug(n)}{2f(n)}\Big(1 - \frac{|g(n)|}{2f(n)^2} \Big),\frac{4^ug(n)}{2f(n)}\Big(1 + \frac{|g(n)|}{2f(n)^2} \Big) \bigg] \subset \bigg[\frac{3}{8}\frac{4^ug(n)}{f(n)},\frac{5}{8}\frac{4^ug(n)}{f(n)} \bigg],
\end{align*}
say, the inclusion following from easy estimations with the upper bound for $|g(n)|$ and the lower bound for $f(n)$, along with the fact that $u\geq 2$. However,
\begin{align*}
\frac{4^u|g(n)|}{f(n)} &\leq \frac{12\cdot 4^u u^{2u+2} J^{u+1}}{n} < \frac{12\cdot 4^u u^{2u+2} J^{u+1}}{5 (2u)^{4u}J^{2u}} \leq \frac{1}{40}
\end{align*}
since $u\geq 2$, so the only possible integer in the interval $\big[\frac{3}{8}\frac{4^ug(n)}{f(n)},\frac{5}{8}\frac{4^ug(n)}{f(n)} \big]$ is zero. But $g(n) \neq 0$, so zero is not contained in this interval.
\end{proof}

\bibliographystyle{amsalpha}

\end{document}